\tikzstyle{vertex}=[circle, draw, inner sep=0pt, minimum size=6pt]
\newcommand{\E}{\mathcal{E}}
\newcommand{\supp}{\textrm{supp}}
\newcommand{\aff}{\textrm{Aff}}
\newtheorem{theorem}{Theorem}
\newtheorem{assumption}[theorem]{Assumption}
\newtheorem{corollary}[theorem]{Corollary}
\newtheorem{definition}[theorem]{Definition}
\newtheorem{example}[theorem]{Example}
\newtheorem{lemma}[theorem]{Lemma}
\newtheorem{notation}[theorem]{Notation}
\newtheorem{proposition}[theorem]{Proposition}
\newtheorem{remark}[theorem]{Remark}
\numberwithin{equation}{section}
\numberwithin{theorem}{section}
\tikzset{
    >=stealth',
    punkt/.style={
           rectangle,
           rounded corners,
           draw=black, very thick,
           text width=6.5em,
           minimum height=2em,
           text centered},
    pil/.style={
           ->,
           thick,
           shorten <=2pt,
           shorten >=2pt,}
}
\tikzset{decorate sep/.style 2 args=
{decorate,decoration={shape backgrounds,shape=circle,shape size=#1,shape sep=#2}}}
\begin{document}

\title{On the support of extremal martingale measures \\ with given marginals: the countable case\thanks{This work is partially supported by the ANR project ISOTACE (ANR-12-MONU-0013). We are also grateful to Beatrice Acciaio, Peter Allen, Graham Brightwell, Alex Cox, Davide Gabrielli, Riccardo Pallottini and Fr\'ed\'eric Patras for their helpful remarks.}}

\author{Luciano Campi\footnote{London School of Economics and Political Science, Department of Statistics, United Kingdom.} \hspace{2mm}  Claude Martini\footnote{Zeliade Systems, France.}}

\date{}

\maketitle

\begin{abstract}
We investigate the supports of extremal martingale measures with pre-specified marginals in a two-period setting. First, we establish in full generality the equivalence between the extremality of a given measure $Q$ and the denseness in $L^1(Q)$ of a suitable linear subspace, which can be seen in a financial context as the set of all semi-static trading strategies. Moreover, when the supports of both marginals are countable, we focus on the slightly stronger notion of weak exact predictable representation property (henceforth, WEP) and provide two combinatorial sufficient conditions, called ``2-link property'' and ``full erasability'', on how the points in the supports are linked to each other for granting extremality. When the support of the first marginal is a finite set, we give a necessary and sufficient condition for the WEP to hold in terms of the new concepts of $2$-net and deadlock. Finally, we study the relation between cycles and extremality.\medskip\\
\emph{Keywords and phrases:} model-free pricing, extremal measures, martingale optimal transport, weak predictable representation property, cycles. \vspace{1mm} \\
\emph{MSC Classification 2010:} 60G42, 91G80.
\end{abstract}

\section{Introduction}

In this paper we study the supports of extremal martingale measures, defined on the product space $\mathbb R_+ ^2 = [0,\infty)^2$ equipped with its Borel $\sigma$-field, under the constraints of having given marginals $\mu$ and $\nu$. The set of all such measures, which is nonempty if and only if $\mu$ is smaller than $\nu$ in the convex order, is at the core of \emph{martingale optimal transport}, a new field of research that has been introduced by \cite{BHLP} in the discrete-time case and by \cite{GHLT} in the continuous-time case. The martingale optimal transport problem is a variant of the classical Monge-Kantorovich optimal transport problem (see \cite{villani}), and it consists in optimizing a given functional over the set $\mathcal M(\mu,\nu)$ of all probability measures with pre-specified marginals $\mu$ and $\nu$ and satisfying the martingale property. The latter property is what makes the difference with the classical optimal transport problem and it is motivated by financial applications. An important growing literature originated from the seminal paper \cite{hobson}, which started the model-free approach to derivative pricing using techniques based on the Skorokhod embedding problem. Within this approach, only very weak assumptions are made, namely the price process of the underlying is a martingale (to rule-out arbitrage opportunities) and its marginals are given by the observation of European Call prices (via the so-called Breeden-Litzenberger formula\footnote{We recall that the Breeden-Litzenberger formula (cf. \cite{breeden}) states that from the prices of European Call options $C(K,T) = \mathbb E_Q [(S_T -K)_+]$ with a fixed maturity $T$ and for all strike prices $K>0$, one can deduce the law of the underlying $S_T$ under $Q$. Indeed taking the right-derivative in $K$ gets $\frac{\partial}{\partial K+}C(T,K) = -Q(S_T >K)$, for all $K>0$.}). Hence, computing for instance the super-replication price of some derivative boils down to maximizing the expected value of its pay-off, say $f$, over the set $\mathcal M(\mu,\nu)$, yielding the following martingale transport problem:
\begin{equation}\label{sup} \sup_{Q \in \mathcal M(\mu,\nu)} Q(f),\end{equation}
where $Q(f)$ denotes the expectation of $f$ under $Q$. This problem has been studied in great depth in \cite{BJ} for a large class of payoffs. The results therein have been further generalized in \cite{HLT}. In the papers \cite{HN,HK} the (martingale) optimal transport has been found for $f(x,y) = \pm |x-y|$. 

Our interest in the extremal elements of the set $\mathcal M(\mu,\nu)$ is fundamentally motivated by the model-free approach. Indeed, the notion of extremal measures is intimately related to that of optimizers and hence to model-free derivatives pricing. More precisely, Bauer Maximum Principle (cf. \cite[Sec. 7.69]{AB}) states that any upper semi-continuous convex function on a compact convex subset of a locally convex Hausdorff space has a maximizer that is an extremal point. Hence it applies to optimization problems such as \eqref{sup} whenever the pay-off $f$ is regular enough. Moreover, we note that when the maximizer is unique it is necessarily an extremal point. Therefore, understanding the support of extremal measures can give insights on the solutions of martingale optimal transport problems such as (\ref{sup}). Another motivation for this study comes as a consequence of our first result (cf. Theorem \ref{douglas2}), which roughly states the following equivalence:\smallskip

\emph{A martingale measure $Q \in \mathcal M(\mu,\nu)$ is extremal if and only if every derivative can be approximately replicated on the support of $Q$ by semi-static strategies}.\smallskip

This can be seen as an extension, to the model-free setting, of the well-known equivalence in the classical setting between ``market completeness'' and extremality of $Q$ in the set of all martingale measures without constraints on the marginals (see, e.g. \cite{JS} for the discrete-time case), which is in turn the financial translation of one of the most important results in martingale theory, namely that extremality is equivalent to the predictable representation property (see \cite{dell} in discrete-time and, e.g., \cite[Theorem 4.7, Ch. V]{RY} in continuous-time).

Therefore, in the model-free setting, knowing the support of extremal measures in $\mathcal M(\mu,\nu)$ gives a way to generate models where any derivative can be (approximately) replicated by semi-static strategies. 

Besides the financial motivation, the problem of characterizing the supports of extremal measures is mathematically interesting on its own and it has quite a long history. Indeed, there is a rich literature on the support of extremal probability measures with given marginals (without the martingale property), which goes back to a paper by Birkhoff \cite{birk}, where a complete description of extremal measures is given in the finite case, i.e. both marginals have finite supports. The main result therein establishes that a probability measure with given marginals is extremal if and only if its support does not contain any cycle. Many papers followed, e.g. \cite{BS,BC,denny,douglas,HW,klopo1,klopo2,letac,lindenstrauss,muk} among others, giving different kinds of characterizations in the finite or countable case and going from functional analysis to combinatorics. In particular \cite{denny} extends to the countable case Birkhoff's result about absence of cycles in the support of extremal measures. In the general case, the problem of giving a complete description of extremal measures with given marginals is still open. 

Inspired by this literature, the present paper provides, in full generality, a characterization of extremality in the martingale case in terms of a weaker form of the predictable representation property. In the more specific case of marginals with countable supports, we define a slightly stronger property (called WEP) which allows us to focus on the combinatorial properties of the support of a given measure in $\mathcal M(\mu,\nu)$. Therefore, we propose two sufficient conditions, called ``2-link property'' and ``full erasability'', having a strong combinatorial flavour. Three important examples satisfy those criteria and hence they are extremal measures: the binomial tree, Hobson and Klimmek's trinomial tree (cf. \cite{HK}) and the left curtain introduced in \cite{BJ} (at least in the case when one of the two marginals has finite support). Those criteria are very easy to implement for generating many other examples of extremal supports. Moreover, we introduce the new notions of 2-net and deadlock, which allow to formulate an essentially necessary and sufficient condition for the WEP. Finally, we also investigate to which extent a characterization of extremality in terms of absence of cycles (compare \cite{letac,muk}) is possible in the martingale setting. \medskip

The paper is organized as follows: Section \ref{sec:setting} sets the framework and the main notation, while in Section \ref{sec:douglas} we give a characterization of extremality in terms of a weak predictable representation property. In Section \ref{sec:WEP} we introduce the weak \emph{exact} predictable representation property (WEP). Section \ref{suff} contains the two sufficient conditions with examples. Moreover, in Section \ref{q-affine-functions-and-2-nets} we study the relation between 2-nets, deadlocks and extremality. Finally, Section \ref{cycles} focuses on the relation between cycles, extremality and WEP, and Section \ref{conclusion} concludes the paper.

\section{Setting and notation}\label{sec:setting}

Let $\mu$ and $\nu$ be two probability laws 
on $(\mathbb R_+ , \mathcal B(\mathbb R_+))$, where $\mathbb R_+ := [0,\infty)$ denotes the set of all positive real numbers.
Let $\mathcal P(\mu,\nu)$ denote the set of all probability measures on $(\mathbb R_+ ^2 , \mathcal B(\mathbb R_+ ^2))$ with marginals $\mu$ and $\nu$, i.e.
\[ Q(A \times \mathbb R_+) =\mu(A), \quad Q(\mathbb R_+ \times A) =\nu(A), \quad \textrm{for all }  A \in \mathcal B(\mathbb R_+). \]
For any $Q \in \mathcal P(\mu,\nu)$, the following decomposition holds:
\[ Q(dx,dy) = q(x,dy) \mu(dx) ,\]
where $q(x,dy)$ is a probability kernel. We will always work under the following
assumption:
\begin{assumption}\label{main-ass} Let $\int x \mu(dx) = \int y \nu(dy) =1$ and $\mu \preccurlyeq \nu$ in the convex order, i.e.
\[ \int c(x) \mu (dx) \le \int c(y) \nu(dy),\]
for all convex functions $c: \mathbb R_+ \to \mathbb R$.
\end{assumption}
Let $\mathcal M(\mu,\nu)$ denote the set of all probability measures in $\mathcal P(\mu,\nu)$ with the martingale property, i.e.
\[ \int_{\mathbb R_+} y q(x,dy) = x ,\quad \mu-a.e.\] 
Assumption \ref{main-ass} implies that $\mathcal M(\mu,\nu)$ is nonempty (cf. \cite{kellerer, strassen}). Moreover the set $\mathcal M(\mu,\nu)$ is compact for the weak convergence of measures (cf. Proposition 2.4 in \cite{BHLP}). 

The central notion of this paper is the one of extremal point of $\mathcal M(\mu,\nu)$, which is any probability measure $Q \in \mathcal M(\mu,\nu)$ such that if $Q=\alpha Q_1 + (1-\alpha) Q_2$ for some $\alpha \in (0,1)$ and $Q_i \in \mathcal M(\mu,\nu)$, $i=1,2$, then $Q=Q_1=Q_2$. The fact that $\mathcal M(\mu,\nu)$ is weakly compact yields that the set of its extremal points is nonempty (cf. Corollary 7.66 in \cite{AB}). When there is no ambiguity, ``$Q$ extremal'' will mean ``$Q$ extremal in  $\mathcal M(\mu,\nu)$''.

Finally, for any real-valued measurable function $f$ defined on some probability space $(\Omega, \mathcal F, Q)$ we will use indifferently the notations $\mathbb E_Q (f)=Q(f)=\int fdQ = \int f(x,y) dQ(x,y)$ for the expectation of $f$ under $Q$.

\begin{remark}
\emph{The setting admits the usual model-free finance interpretation as follows: let $(x,y)$ be a generic element of the sample space $\mathbb R_+ ^2$, and let $X$ (resp. $Y$) denote the application $X(x,y)=x$ (resp. $Y(x,y)=y$). Hence $(X,Y)$ is a two-dimensional random vector defined on the measurable space $(\mathbb R_+ ^2, \mathcal B(\mathbb R_+ ^2))$. Under any measure $Q \in \mathcal M(\mu,\nu)$, $X$ and $Y$ have respective laws $\mu$ and $\nu$. Moreover, under Assumption \ref{main-ass}, $(1,X,Y)$ is martingale under $Q$ for its natural filtration. Hence it can be viewed as the (discounted) price process of some risky asset. Moreover any measure $Q \in \mathcal M(\mu,\nu)$ specifies the full law of the price process, in other terms gives a price model for the risky asset, which is compatible with the knowledge of the marginals $\mu,\nu$ as well with the absence of arbitrage opportunities (due to the martingale property).}
\end{remark}

\section{The Douglas-Lindenstrauss-Naimark Theorem and its consequences}\label{sec:douglas}

In this section, we give a functional analytical characterisation of extremality in $\mathcal M(\mu,\nu)$ with a natural financial interpretation. We stress that the results in this part of the paper hold in full generality. More restrictive assumptions on the supports of the marginals $\mu$ and $\nu$ will be needed in the next sections.
We start with recalling the following classical result relating extremality and denseness of subspaces of $L^1 (Q)$.  

\begin{theorem}[Douglas \cite{douglas}, Lindenstrauss \cite{lindenstrauss}, Naimark \cite{naimark}] \label{douglas}
Let $(\Omega, \mathcal F , Q)$ be a probability space and let $F$ be a linear subspace of $L^1 (Q)$ such that $1 \in F$. The following are equivalent:\begin{enumerate}
\item[(i)] $Q$ is an extremal point of the set of all probability measures $R$ on $(\Omega, \mathcal F)$ (not necessarily equivalent to $Q$) such that $\mathbb E_R (f) = \mathbb E_Q (f)$ for all $f \in F \cap L^1 (R)$;
\item[(ii)] $F$ is dense in $L^1 (Q)$ with the strong topology.
\end{enumerate}
\end{theorem}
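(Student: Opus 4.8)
The plan is to work with the convex set $\mathcal R$ of all probability measures $R$ on $(\Omega,\mathcal F)$ satisfying $\mathbb E_R(f)=\mathbb E_Q(f)$ for every $f\in F\cap L^1(R)$ (note $Q\in\mathcal R$, so the extremality question is meaningful), and to exploit the duality $L^1(Q)^\ast=L^\infty(Q)$, valid since $Q$ is a probability, hence $\sigma$-finite, measure. Both implications reduce to detecting nonzero elements $g\in L^\infty(Q)$ that annihilate $F$, i.e. $\int fg\,dQ=0$ for all $f\in F$; denseness of $F$ in $L^1(Q)$ is exactly the statement that no such $g$ exists, by Hahn--Banach.

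For (ii)$\Rightarrow$(i), I would suppose $Q=\alpha R_1+(1-\alpha)R_2$ with $\alpha\in(0,1)$ and $R_1,R_2\in\mathcal R$. Then $R_1\le\alpha^{-1}Q$ and $R_2\le(1-\alpha)^{-1}Q$, so each $R_i\ll Q$ with bounded density $h_i:=dR_i/dQ\in L^\infty(Q)$; in particular $L^1(Q)\subseteq L^1(R_i)$, so the defining identity of $\mathcal R$ applies to every $f\in F$ and reads $\int f(h_i-1)\,dQ=0$. Since $h_i-1\in L^\infty(Q)$ defines a continuous functional on $L^1(Q)$ vanishing on the dense subspace $F$, it must be zero, whence $h_i=1$ $Q$-a.e. and $R_i=Q$. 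This yields extremality.

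For (i)$\Rightarrow$(ii) I would argue by contraposition: if $F$ is not dense in $L^1(Q)$, Hahn--Banach produces $g\in L^\infty(Q)$, $g\neq 0$, with $\int fg\,dQ=0$ for all $f\in F$; after rescaling assume $\|g\|_\infty\le 1$. Taking $f=1\in F$ gives $\int g\,dQ=0$. Then $h_\pm:=1\pm g$ are nonnegative with $\int h_\pm\,dQ=1$, so $dR_\pm:=h_\pm\,dQ$ are probability measures with $Q=\tfrac12(R_++R_-)$ and $R_+\neq R_-$ (because $h_+-h_-=2g\neq 0$ in $L^\infty(Q)$). It remains to verify $R_\pm\in\mathcal R$: for $f\in F\cap L^1(R_\pm)$ one has $|fg|\le|f|\in L^1(Q)$, so $\int fg\,dQ$ is well defined and equals $0$, giving $\mathbb E_{R_\pm}(f)=\int f\,dQ\pm\int fg\,dQ=\mathbb E_Q(f)$. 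Thus $Q$ is a nontrivial convex combination of distinct elements of $\mathcal R$, hence not extremal.

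The measure-theoretic bookkeeping is routine; the one point deserving genuine care is the domain condition built into the definition of $\mathcal R$, namely that the identity $\mathbb E_R(f)=\mathbb E_Q(f)$ is imposed only for $f\in F\cap L^1(R)$ rather than for all $f\in F$. This is precisely why boundedness of the densities $h_i$ (in the first direction) and the bound $|fg|\le|f|$ (in the second) are needed: they guarantee that the relevant $f\in F$ actually lie in $L^1(R)$ and that the pairing $\int fg\,dQ$ converges, so that the annihilation hypothesis can legitimately be invoked. I expect this interplay between the $L^1$-integrability constraint and the duality argument to be the main, if modest, obstacle.
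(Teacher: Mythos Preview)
Your proof is correct and is essentially the standard argument for the Douglas--Lindenstrauss--Naimark theorem. Note, however, that the paper does not supply a proof of this statement: it is quoted as a classical result with references to \cite{douglas,lindenstrauss,naimark}, and is then applied as a black box in the proof of Theorem~\ref{douglas2}. So there is no ``paper's own proof'' to compare against; what you have written is precisely the kind of duality-and-perturbation argument found in those original sources.
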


\begin{remark}
\emph{The Douglas-Lindenstrauss-Naimark theorem was used in \cite[Ch. V]{RY} to prove the predictable representation property (PRP) for continuous martingales in the Brownian filtration. Other applications of this theorem in relation with various notions of market completeness can be found in \cite{campiRendi,campiDEF}.}
\end{remark}

A direct application of this theorem to our setting gives the following equivalence, where the notation $L^0 (\mu)$ stands for the set of all measurable functions with finite values $\mu$-a.s.

\begin{theorem}\label{douglas2} Let $Q \in \mathcal M(\mu,\nu)$. The following two properties are equivalent:
\begin{enumerate}
\item[(i)] $Q$ is extremal in $\mathcal M(\mu,\nu)$;

\item[(ii)] the weak Predictable Representation Property (PRP) holds in the following sense: the set of all functions $f \in L^1(Q)$ that can be represented as
\begin{equation}\label{weakPRP} f(x,y) = \varphi(x) + h(x) (y-x) - \psi (y), \quad Q-\textrm{a.s.} \end{equation}
for some functions $\varphi \in L^1 (\mu), \psi \in L^1(\nu)$ and $h \in L^0 (\mu)$, is dense in $L^1 (Q)$.
\end{enumerate}
\end{theorem}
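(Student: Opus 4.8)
The plan is to apply Theorem~\ref{douglas} with the subspace
\[
F = \left\{ f \in L^1(Q) : f(x,y) = \varphi(x) + h(x)(y-x) - \psi(y)\ \ Q\text{-a.s., } \varphi \in L^1(\mu),\ \psi \in L^1(\nu),\ h \in L^0(\mu)\right\}.
\]
This $F$ is plainly a linear subspace of $L^1(Q)$, and it contains the constant function $1$ (take $\varphi \equiv 1$, $h \equiv 0$, $\psi \equiv 0$). With this choice, condition (ii) of Theorem~\ref{douglas} is verbatim condition (ii) of the present theorem, so the whole argument reduces to identifying the family of ``test measures'' appearing in Theorem~\ref{douglas}(i) with $\mathcal M(\mu,\nu)$.

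Concretely, let $\mathcal R$ denote the set of all probability measures $R$ on $(\mathbb R_+^2, \mathcal B(\mathbb R_+^2))$ satisfying $\mathbb E_R(f) = \mathbb E_Q(f)$ for every $f \in F \cap L^1(R)$. The key step is to prove $\mathcal R = \mathcal M(\mu,\nu)$; once this is done, Theorem~\ref{douglas} reads precisely ``$Q$ is extremal in $\mathcal M(\mu,\nu)$ if and only if $F$ is dense in $L^1(Q)$'', which is the desired equivalence.

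For the inclusion $\mathcal M(\mu,\nu) \subseteq \mathcal R$, I would take $R \in \mathcal M(\mu,\nu)$ and $f \in F \cap L^1(R)$. Since $R$ has marginals $\mu$ and $\nu$, the pieces $\varphi(X)$ and $\psi(Y)$ already lie in $L^1(R)$, hence so does $h(X)(Y-X) = f - \varphi(X) + \psi(Y)$; conditioning on $X$ and invoking the martingale property of $R$ then yields $\mathbb E_R[h(X)(Y-X)] = 0$, so that $\mathbb E_R(f) = \int \varphi\, d\mu - \int \psi\, d\nu$, an expression independent of $R$ and therefore equal to $\mathbb E_Q(f)$. For the reverse inclusion $\mathcal R \subseteq \mathcal M(\mu,\nu)$, I would first recover the marginals of any $R \in \mathcal R$ by testing against $f = \varphi(X)$ and $f = \psi(Y)$ with \emph{bounded} $\varphi, \psi$ (which automatically lie in $F \cap L^1(R)$), obtaining $X \sim \mu$ and $Y \sim \nu$ under $R$. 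Only afterwards, knowing that $\int x\, d\mu = \int y\, d\nu = 1$ forces $Y - X \in L^1(R)$, can I legitimately test against $f = h(X)(Y-X)$ with bounded $h$, now in $L^1(R)$, to deduce $\mathbb E_R[h(X)(Y-X)] = 0$ for all bounded $h$ and hence the martingale property of $R$.

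The main obstacle is exactly the integrability bookkeeping forced by the intersection $F \cap L^1(R)$: one may pair $Q$ against $R$ only on functions integrable under $R$, and a representation of the form \eqref{weakPRP} need not be integrable term by term before the marginals of $R$ are known. This dictates the order of the argument — the marginal constraints must be secured first (which simultaneously certifies $Y - X \in L^1(R)$) so that the martingale constraint can even be tested afterwards. Apart from this delicate point, the proof is a routine specialization of the abstract Douglas--Lindenstrauss--Naimark criterion.
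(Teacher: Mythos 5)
Your strategy is the paper's: apply Theorem~\ref{douglas} to the subspace $F$ of semi-static payoffs and identify the set $\mathcal R$ of test measures appearing in Theorem~\ref{douglas}(i). Your proof of the inclusion $\mathcal R \subseteq \mathcal M(\mu,\nu)$ --- recover the marginals by testing bounded $\varphi(X)$ and $\psi(Y)$ first, and only then test bounded $h(X)(Y-X)$, which is $R$-integrable once the marginals are known --- is correct, and it is exactly the inclusion the paper records (``the condition $\int f\,dR = \int f\,dQ$ for all $f \in F \cap L^1(R)$ implies that $R \in \mathcal M(\mu,\nu)$''). Where you depart from the paper is in trying to prove the \emph{reverse} inclusion $\mathcal M(\mu,\nu) \subseteq \mathcal R$, so as to extract both implications of Theorem~\ref{douglas2} from Theorem~\ref{douglas} at once; the paper instead proves ``dense $\Rightarrow$ extremal'' by a short direct argument on a convex decomposition of $Q$ and never needs that inclusion.

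There is a genuine gap in that reverse inclusion. Membership of $f$ in $F$ gives the representation (\ref{weakPRP}) only $Q$-almost surely, and $f$ itself is only a $Q$-equivalence class. For $R \in \mathcal M(\mu,\nu)$ with $R \not\ll Q$ --- such measures exist in general; indeed, by Proposition~\ref{absolute} (itself a consequence of the theorem being proved), once $Q$ is extremal and $\mathcal M(\mu,\nu) \neq \{Q\}$, \emph{every} other element of $\mathcal M(\mu,\nu)$ fails to be absolutely continuous with respect to $Q$ --- the identity $h(X)(Y-X) = f - \varphi(X) + \psi(Y)$ may fail on a set of positive $R$-measure, and $\mathbb E_R(f)$ is not even determined by the $L^1(Q)$-class of $f$. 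So the step ``conditioning on $X$ and invoking the martingale property of $R$'' is unjustified, and the asserted equality $\mathcal R = \mathcal M(\mu,\nu)$ is not established; under the natural reading it is false, $\mathcal R$ being rather $\{R \in \mathcal M(\mu,\nu) : R \ll Q\}$. The repair is cheap and is precisely how the paper organizes its proof: in the direction ``dense $\Rightarrow$ extremal'' one only ever needs your inclusion for the measures $Q_i$ of a convex decomposition $Q = \alpha Q_1 + (1-\alpha)Q_2$ with $Q_i \in \mathcal M(\mu,\nu)$, and these satisfy $Q_i \ll Q$ automatically, so the $Q$-a.s.\ representation does hold $Q_i$-a.s.\ (the paper's ``$Q$-a.s., hence $Q_i$-a.s.\ as well'') and your computation of $\mathbb E_{Q_i}(f)$ becomes valid. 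With that restriction, or by redefining $F$ through representations holding identically rather than $Q$-a.s., your argument goes through.
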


\begin{proof} We prove first that weak PRP under $Q \in \mathcal M(\mu,\nu)$ implies that $Q$ is extremal. Assume that $Q=\alpha Q_1 + (1-\alpha)Q_2$ for some $\alpha \in (0,1)$ and $Q_i \in \mathcal M (\mu,\nu)$ for $i=1,2$. Therefore, $Q_i \ll Q$ for $i=1,2$. Consider any functions $f \in L^1(Q)\cap L^1(Q_i)$ such that $f(x,y)=\varphi(x)+ h(x)(y-x)-\psi(y)$ for suitable functions $h,\varphi,\psi$ and $Q$-a.s., hence $Q_i$-a.s. as well. Taking the expectation under those measures, we get $Q(f)=\mu(\varphi)+\nu(-\psi)=Q_i (f)$ for $i=1,2$, for all bounded functions $f$ satisfying (\ref{weakPRP}). By denseness, we obtain that $Q=Q_1=Q_2$, i.e. $Q$ is extremal in $\mathcal M(\mu,\nu)$. It remains to prove the converse. To do so, it suffices to apply Theorem \ref{douglas} to the set
\begin{eqnarray}\label{F}
F &=& \{ f \in L^1 (Q) : f(x,y) = \varphi(x) + h(x) (y-x) - \psi(y), \nonumber \\
&&\textrm{ for some } \varphi \in L^1 (\mu), \psi \in L^1(\nu), h \in L^0(\mu)\},
\end{eqnarray}
which clearly contains the function $1$.
Indeed, notice that with such a choice of the subspace $F$, the condition $\int f dR = \int f dQ$ for all $f \in F \cap L^1 (R)$ implies that $R \in \mathcal M(\mu,\nu)$.\end{proof}

Theorem \ref{douglas2} has a natural financial interpretation. Any measure $Q \in \mathcal M(\mu,\nu)$ can be seen as some price model consistent with the marginals $\mu,\nu$. Hence, the theorem above gives that the extremal models are the ones where any contingent claim can be (approximately) replicated by trading dynamically in the underlying in 
a self-financing way and statically in some European options with payoffs $\varphi$ (at time $1$) and $\psi$ (at time 2). 

We conclude this section with a proposition yielding in particular that the extremal points of $\mathcal M(\mu,\nu)$ are fully characterized by their supports.

\begin{proposition}\label{absolute}
$Q \in \mathcal M(\mu,\nu)$ is extremal if and only if for any $R \in \mathcal M (\mu,\nu)$ with $R \ll Q$ we have $R=Q$ .
\end{proposition}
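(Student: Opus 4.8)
The plan is to establish the two implications separately, with all the substance concentrated in the ``only if'' direction; the ``if'' direction is a soft consequence of the definition of an extremal point. For the latter, I would assume the domination property and suppose $Q=\alpha Q_1+(1-\alpha)Q_2$ for some $\alpha\in(0,1)$ and $Q_1,Q_2\in\mathcal M(\mu,\nu)$. Since $Q_2\geq 0$ we have $\alpha Q_1\leq Q$ as measures, so every $Q$-null set is $Q_1$-null, i.e. $Q_1\ll Q$; by symmetry $Q_2\ll Q$ as well. The hypothesis then yields $Q_1=Q_2=Q$, which is exactly the extremality of $Q$.

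For the ``only if'' direction I would argue by contradiction: assume $Q$ extremal, suppose there is $R\in\mathcal M(\mu,\nu)$ with $R\ll Q$ but $R\neq Q$, and produce a nontrivial convex splitting of $Q$. Writing $Z:=dR/dQ$, the key observation is that the reflected measure $2Q-R=(2-Z)\,Q$ satisfies every linear constraint defining $\mathcal M(\mu,\nu)$: it is a signed measure of mass one, its two marginals coincide with those of $Q$ (because $R$ and $Q$ share $\mu,\nu$), and $\int h(x)(y-x)\,d(2Q-R)=0$ by the martingale property of both $R$ and $Q$. Consequently, as soon as $Z\leq 2$ holds $Q$-a.s., we have $Q_2:=2Q-R\in\mathcal M(\mu,\nu)$, and $Q=\tfrac12 R+\tfrac12 Q_2$ is a genuine convex combination with $R\neq Q_2$, contradicting extremality. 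Equivalently, this amounts to extending the segment joining $R$ and $Q$ a little beyond $Q$ while staying in $\mathcal M(\mu,\nu)$.

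The hard part will be that $Z$ need not be bounded, and when $\operatorname{ess\,sup}Z=\infty$ neither $2Q-R$ nor any prolongation of the segment past $Q$ stays nonnegative, so the reflection argument collapses. I would handle this through the following reduction, which I expect to be the technical heart of the proof. First note that in any true decomposition $Q=\tfrac12(Q_1+Q_2)$ the densities $dQ_i/dQ$ are automatically bounded by $2$; hence $Q$ fails to be extremal precisely when there is a nonzero $\beta\in L^\infty(Q)$ with $|\beta|\leq 1$ and $Q\pm\beta\,Q\in\mathcal M(\mu,\nu)$, the latter membership being equivalent to $\int f\beta\,dQ=0$ for every $f$ in the subspace $F$ of Theorem~\ref{douglas2}. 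On the other hand, the dominated measure $R$ furnishes $W:=Z-1\in L^1(Q)$, nonzero and bounded below by $-1$, satisfying $\int fW\,dQ=0$ for all bounded $f\in F$; but since $W$ may fail to lie in $L^\infty(Q)$, one cannot simply read off a bounded annihilator of $F$, and this is exactly the point where the $L^1$-$L^\infty$ duality underlying Theorem~\ref{douglas2} does not apply directly.

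The remaining and genuinely delicate step is therefore to manufacture from the possibly unbounded $W$ a nonzero bounded perturbation $\beta$ that is still orthogonal to $F$. Concretely I would truncate the density, replacing $W$ by $W\mathbf 1_{\{|W|\leq c\}}$, and then repair the constraints it violates: the resulting errors in the mass and in the $X$-conditional constraints (orthogonality to all $\varphi(x)$ and to all $h(x)(y-x)$) can be corrected fibrewise and tend to $0$ as $c\to\infty$, whereas the error in the $\nu$-marginal (orthogonality to all $\psi(y)$) is genuinely coupled across fibres and resists a naive truncation. Controlling this coupled correction, so that the repaired perturbation is simultaneously nonzero, bounded, and exactly $F$-orthogonal, is the crux; once it is in hand, $Q\pm\beta\,Q$ provides the forbidden convex splitting and contradicts the extremality of $Q$, yielding $R=Q$.
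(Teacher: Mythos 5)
Your ``if'' direction is correct and is exactly the paper's argument (a nontrivial convex decomposition of $Q$ produces a dominated element of $\mathcal M(\mu,\nu)$ different from $Q$). The ``only if'' direction, however, is not a proof: the step you yourself call ``the crux'' --- manufacturing a nonzero \emph{bounded} $F$-orthogonal perturbation out of the possibly unbounded $W=dR/dQ-1$ --- is precisely what is left unproved. The reflection $2Q-R$ only works when $dR/dQ\le 2$; the truncation $W\mathbf 1_{\{|W|\le c\}}$ destroys orthogonality to the functions $\psi(y)$, and, as you note, this constraint couples all the fibres $\{x\}\times Y$ simultaneously, so no fibrewise repair can restore it. You offer no mechanism to control this coupled correction, so the contradiction with extremality is never reached; an argument ending with ``once it is in hand'' has a genuine gap exactly there.

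The missing idea is that the extremality of $Q$ has already been converted into an analytic tool by Theorem \ref{douglas2}: the subspace $F$ of semi-static payoffs is \emph{dense} in $L^1(Q)$. The paper uses this denseness not to perturb $Q$ at all, but to show that $\ell=dR/dQ$ is automatically essentially bounded. Indeed, for every $f\in F\cap L^1(R)$ with representation triple $(\varphi,h,\psi)$ one has $\int f\,dR=\mu(\varphi)-\nu(\psi)=\int f\,dQ$ (both $R$ and $Q$ lie in $\mathcal M(\mu,\nu)$), so the linear functional $f\mapsto\int \ell f\,dQ$ coincides with the norm-continuous functional $f\mapsto\int f\,dQ$ on a dense subspace of $L^1(Q)$; it is therefore continuous, and one may choose a version of $\ell$ in $L^\infty(Q)$. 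At that point the $L^1$--$L^\infty$ duality that you correctly observed ``does not apply directly'' to $W$ does apply: $\beta=\ell-1\in L^\infty(Q)=(L^1(Q))^*$ annihilates the dense subspace $F$, hence $\beta=0$, i.e.\ $R=Q$, with no truncation, repair, or splitting of $Q$ needed. Your proposal never invokes the denseness conclusion of Theorem \ref{douglas2} (you use $F$ only as a bookkeeping device for the constraints), and that omission is exactly why you get stuck where the paper's proof is one line.
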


\begin{proof}
Let $Q \in \mathcal M(\mu,\nu)$. Assume that $Q$ is not extremal, i.e. $Q=\alpha Q_1+(1-\alpha) Q_2$ for some $Q_i \in \mathcal M(\mu,\nu)$, $i=1,2$, with $Q_1 \ne Q$ and some $\alpha\in (0,1)$. Hence we have $Q_1 \ll Q$. Now let $Q$ be extremal and $R \in \mathcal M (\mu,\nu)$ such that $R \ll Q$. 
Denote \(\ell= \frac{dR}{dQ}\) the corresponding density given by the Radon-Nikodym Theorem. Moreover we can choose a version of $\ell$ in $L^\infty(Q)$, yielding $L^1(Q) \subset L^1(R)$. Hence, for any $f \in L^1(R)$ with representation triple $(h, \varphi, \psi)$ as in (\ref{weakPRP}) one has
\[ \mu (\varphi ) - \nu (\psi ) =\int f dQ = \int fdR = \int \ell f dQ, \] 
It follows that  $\ell-1$ is orthogonal to $F$, which is dense in $L^1(Q)$. Therefore we have $\ell=1$, so that $R=Q$.\end{proof}

\section{The Weak Exact Predictable Representation Property
}\label{sec:WEP}

From now on we will work under the following standing assumption:
\begin{assumption}\label{main-ass2} $\mu$ and $\nu$ are supported on countable subsets $X$ and $Y$ of $\mathbb R_+$.
\end{assumption}

We introduce some notations in this discrete support context. Let $S$ be any subset of $X \times Y$. For $(x, y) \in S$ we let
\[Y_S(x) = \{z \in Y : (x, z) \in S\}, \quad X_S(y) = \{t \in X : (t, y) \in S\},\]
and
\[S_X = \{x \in X : \exists y \in Y, (x, y) \in S\}, \quad S_Y = \{y \in Y: \exists y \in Y, (x, y) \in S\}.\]
We call \emph{mesh} any set $S \subset X \times Y$ such that $| S_X | = 1$. If in addition $S$ satisfies $| S_Y | = 2$, it will be called {\it 2-mesh} (or binomial mesh).
For any measure $Q \in \mathcal P(\mu,\nu)$ we define its support as the set
\[ \supp (Q) := \{(x,y) \in X \times Y : Q(x,y)>0 \}.\] 
Whenever $S$ is the support of a probability $Q$, i.e. $S =\supp (Q)$, and when there is no ambiguity we will drop $S$ from the notation
$X_S(y), Y_S(x)$ and simply write $X(y), Y(x)$.

Finally, the notation $ | A |$ denotes the cardinality of any set $A$ and by numbering (or ordering) of any countable set $A$ we mean any possible representation of the set as a sequence. 

\vspace{0.5cm}

We introduce now the following {\it exact} strengthening of the weak PRP, which is motivated by Mukerjee purely geometrical characterization of extremality in the non-martingale case (see the introduction in \cite{muk} and his Theorem 2.7). 
Such a property is easier to handle than the weak PRP (cf. Theorem \ref{douglas2}) since it avoids the issues of integrability of the functions appearing in the representation, as well as the passage to the limit in $L^1 (Q)$.
Yet the main difference with respect to the weak PRP is subtler: we require the replication property only for functions defined on a subset $S$ of the product space $X \times Y$. In most instances, $S$ will be the support of some measure $Q$.

\begin{definition}[Weak Exact PRP] Let $S$ be a subset of $X \times Y$. We say that the Weak Exact PRP (henceforth, the WEP) holds for $S$ 
if for every function $f:S \to \mathbb R$, there exist functions $h,\varphi : S_X \to \mathbb R$ and $\psi : S_Y \to \mathbb R$ such that
\begin{equation} f(x,y) = \varphi (x) + h(x) (y-x) - \psi(y), \quad (x,y) \in S. \label{WEPf}\end{equation}
Moreover, let $f:S \to \mathbb R$ be an arbitrary function. We say that WEP($f$) holds in $S$ if (\ref{WEPf}) is satisfied for suitable functions $h,\varphi, \psi$.

We say that the WEP holds for $Q \in \mathcal M(\mu,\nu)$  if the WEP holds for the support $S$ of $Q$.
\end{definition}

\begin{example}[Binomial mesh]
Let $x, y_1, y_2$ be some given positive numbers with $y_1 \neq y_2$ and let $S:= \{(x,y_i): i=1,2\}$ be the corresponding 2-mesh. Then the WEP holds for $S$.
Indeed for any given $f$, set $\psi=0$, $h(x)=\frac{f(x,y_1)-f(x,y_2)}{y_1-y_2}$, and $\varphi(x) = \frac{x-y_2}{y_1-y_2}f(x,y_1)+\frac{y_1-x}{y_1-y_2}f(x,y_2)$.
\end{example}

\begin{remark} \emph{Notice that by defining the WEP as a property of a given set of paths $S$, 
we possibly include sets that cannot be supports of martingale probability measures, as seen on the previous example
when, for instance, $y_1 > y_2 > x >0$. More generally, if the WEP holds for some set $S \subset X \times Y$ and if $m: \mathbb R_+ \to \mathbb R_+$ is an application such that
$id+m$ is invertible, where $id$ stands for the identity map, then the WEP holds for the set $S_m = \{(x+m(x), y): (x,y) \in S \}$ as well. 
Indeed, consider a function $f : S_m \to \mathbb R$ and let $(\varphi, h, \psi)$ be the decomposition of the function $g(x,y)=f(x+m(x), y)$.
We have $f(x+m(x), y) =\varphi(x)+h(x)(y-x)-\psi(y)=\varphi(x)+h(x)m(x)+h(x)(y-(x+m(x)))-\psi(y)$ so that by setting
$$\varphi_f(x+m(x))=\varphi(x)+h(x)m(x), \quad h_f(x+m(x))=h(x),$$
the WEP for $S_m$ follows since $id+m$ is invertible. This shows in particular that the WEP is a property of purely geometric and combinatorial nature.}\end{remark}

We conclude this section with showing that the WEP is a sufficient condition for the extremality of a measure $Q$ in $\mathcal M(\mu,\nu)$ when the support of $\nu$ is essentially ``generated'' by finitely many points in the support of $\mu$ (see the statement of the following proposition). This happens, for instance, when either the support of $\mu$ or the one of $\nu$ are finite.  This is not surprising since the WEP is {\it exact} on the support of $Q$.

In those situations, one can get for free the integrability of the terms in the WEP decomposition, and therefore the extremality. 

\begin{proposition}\label{WEP-extr}
Let $Q \in \mathcal M(\mu,\nu)$ and let $S$ be its support.  Assume that 
there is a finite set $\{x_1,\ldots ,x_n\} \subset S_X$ such that $S_Y = \cup_{1 \leq i \leq n} Y(x_i)$ (for instance, if either $S_X$ or $S_Y$ is finite). If the WEP holds for $Q$, then $Q$ is extremal.
\end{proposition}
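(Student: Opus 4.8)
The plan is to reduce extremality to the weak PRP characterization of Theorem~\ref{douglas2}, and to use the WEP to produce, for any bounded $f \in L^1(Q)$, an exact representation of the form~(\ref{weakPRP}) on the support $S$ of $Q$. Since $S = \supp(Q)$, a function that is $Q$-a.s.\ equal to $\varphi(x)+h(x)(y-x)-\psi(y)$ is exactly a function that agrees with such a representation at every point of $S$; so the WEP directly hands us a decomposition triple $(h,\varphi,\psi)$ for the \emph{restriction} $f|_S$. The only gap between this and the hypothesis~(ii) of Theorem~\ref{douglas2} is integrability: the weak PRP requires $\varphi \in L^1(\mu)$, $\psi \in L^1(\nu)$ and $h \in L^0(\mu)$, whereas the WEP produces only pointwise functions on $S_X$ and $S_Y$ with no a priori control on their growth.

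This is where the finiteness assumption enters, and I expect it to be the crux. Suppose first that $S_X$ is finite. Then $\varphi$ and $h$ are defined on a finite set, hence automatically lie in $L^1(\mu)$ and $L^0(\mu)$ respectively (any function on a finite set is bounded). The term $\varphi(x) + h(x)(y-x)$ is then in $L^1(Q)$ provided $y \mapsto y$ is integrable, which holds since $\int y\,\nu(dy)=1$ under Assumption~\ref{main-ass}; consequently $\psi(y) = \varphi(x)+h(x)(y-x) - f(x,y)$, evaluated along $S$, is forced to be in $L^1(\nu)$ as a difference of $L^1$ functions (here one uses that $\nu$-a.e.\ $y$ is reached from some $x$ with $(x,y)\in S$, so $\psi$ is pinned down $\nu$-a.e.). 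The symmetric case where $S_Y$ is finite is handled the same way, with the roles of $\varphi,h$ and $\psi$ exchanged: now $\psi$ is bounded, and one must argue that $h$ can be chosen so that $h(x)(y-x)$ stays integrable, which again follows because for each fixed $x$ the martingale constraint and finiteness of $S_Y$ bound the relevant increments.

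Having verified the integrability of the triple, every bounded $f$ admits a representation~(\ref{weakPRP}) holding $Q$-a.s. Bounded functions are dense in $L^1(Q)$, so the set of functions satisfying~(\ref{weakPRP}) is dense in $L^1(Q)$; this is precisely condition~(ii) of Theorem~\ref{douglas2}, whence $Q$ is extremal. The main obstacle, as flagged above, is the integrability bookkeeping in the $S_Y$-finite case: the WEP gives freedom in the (non-unique) choice of $h$, and one must check that \emph{some} admissible choice keeps $x\mapsto h(x)$ measurable and $h(x)(y-x)$ integrable under $Q$; I would resolve this by fixing, for each $x$, a reference point $y_0(x) \in Y_S(x)$ and solving for $h(x)$ from the two-point equation, then invoking the finiteness of $S_Y$ (and hence the uniform boundedness of the slopes entering each mesh) to control the resulting function.
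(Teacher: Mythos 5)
Your proof is correct, and in the $S_X$-finite case it is essentially the paper's argument verbatim: $\varphi$ and $h$ are bounded because they live on a finite set, $\int |h(x)||y-x|\,dQ \le \|h\|_\infty \int (x+y)\,dQ < \infty$ by the moment/martingale hypotheses, and $\psi$ is then forced into $L^1(\nu)$ by subtraction, after which Theorem \ref{douglas2} concludes. In the $S_Y$-finite case, however, your mechanism genuinely differs from the paper's. The paper never bounds $h$: it integrates the identity $g(x,y)+\psi(y)=\varphi(x)+h(x)(y-x)$ against the kernel $q(x,dy)$, so that the martingale property annihilates the $h$-term and yields $\varphi(x)=\int\bigl(g(x,y)+\psi(y)\bigr)q(x,dy)$, which lies in $L^1(\mu)$ because $g\in L^1(Q)$ and $\psi$ is bounded; integrability of $h(x)(y-x)$ then comes for free by subtraction. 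You instead bound $h$ itself: distinct points of the finite set $S_Y$ are separated by some $\delta>0$, so the two-point slope formula gives $|h(x)|\le 2\bigl(\|f\|_\infty+\|\psi\|_\infty\bigr)/\delta$. This is more elementary (no conditional expectations), but it is precisely why you must first restrict to bounded $f$ and then invoke density of bounded functions in $L^1(Q)$, whereas the paper's computation handles every $g\in L^1(Q)$ at once and shows the subspace $F$ of (\ref{F}) is all of $L^1(Q)$; either conclusion suffices for condition (ii) of Theorem \ref{douglas2}. Two loose ends to tighten in your version: first, the two-point equation requires $|Y_S(x)|\ge 2$, so the degenerate case must be treated separately --- if $Y_S(x)$ is a singleton, the martingale property forces $Y_S(x)=\{x\}$ and you may simply redefine $h(x):=0$; this is in fact the only place where the ``freedom in the choice of $h$'' you invoke exists, since at every other $x$ the pair $(\varphi(x),h(x))$ is already determined by $\psi$ and $f$. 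Second, it is the minimum gap of $S_Y$ (denominator) together with boundedness of $f$ and $\psi$ (numerator), not ``the martingale constraint,'' that bounds the slopes --- your concluding parenthetical states this correctly, while the sentence preceding it does not. Measurability of $h$ is automatic here, as $X$ is countable by Assumption \ref{main-ass2}.
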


\begin{proof}
Take some function $g \in L^1(Q)$, so that in particular $\int | g(x,y)| q(x ,dy) < \infty $ for all $x$ in the support of $\mu$, hence for all $x \in \{x_1,\ldots,x_n\}$.
By assumption there exist measurable functions $\varphi,h:S_X \to \mathbb R$ and $\psi:S_Y \to \mathbb R$ such that 
\begin{equation} \label{wep-g} g(x,y)=\varphi(x)-\psi(y)+h(x)(y-x) \end{equation} 
on the support of $Q$. With a slight abuse of notation, we still indicate by $\varphi, \psi$ and $h$ the extension of such functions to the whole respective spaces, i.e. $X$ for $\varphi, h$ and $Y$ for $\psi$, by setting them equal to $0$ outside $S_X$ for $\varphi,h$ and outside $S_Y$ for $\psi$. 

Now, for every $y \in S_Y$, $|\psi(y)| \leq |\varphi(x)| +|h(x)|(x+y) + |g(x,y)|$ for some $x \in \{x_1,\ldots ,x_n\}$ which entails 
\[ |\psi(y)| \leq \max_{1\le i \le n} |\varphi(x_i)| +\max_{1\le i \le n}(|h(x_i)|x_i)+\max_{1\le i \le n} |h(x_i)| y + \max_{1\le i \le n} | g(x_i, y)|.\] Therefore, since $\max_{1\le i \le n} |g(x_i,y)| \le \sum_{i=1}^n |g(x_i,y)|$ and each function $y \mapsto g(x_i,y)$ is $q(x_i,dy)$-integrable for all $i=1,\ldots,n$, we have
\[ \int | g(x_i,y)| d\nu (y) = \int | g(x_i,y)| q(x_i ,dy) \mu(\{x_i\}) < \infty.\]
Moreover, the function $y \mapsto \max_{1\le i \le n} |h(x_i)| y$ also belongs to $L^1(\nu)$, yielding that $\psi \in L^1(\nu)$ as well. After rearranging the terms and taking conditional expectations in \eqref{wep-g} we get 
\[ \varphi(x) = \int g(x, y) q(x,dy) + \int \psi(y) q(x,dy) \] 
so that $|\varphi(x)| \leq \int (| g(x,y)| + |\psi(y)|) q(x,dy)$. Since $g \in L^1(Q)$ and $\psi \in L^1(\nu)$, we also have that $\varphi \in L^1(\mu)$. Then by difference $(x,y) \mapsto h(x)(y-x)$ belongs to $L^1(Q)$ as well.

Hence we have shown that for every integrable function $g \in L^1 (Q)$, each term in its WEP decomposition \eqref{wep-g} is integrable for the respective measures, therefore by applying Theorem \ref{douglas2} we get that $Q$ is extremal.
\end{proof}

\begin{remark}
\emph{When both $\mu$ and $\nu$ have finite support, WEP and extremality are actually equivalent. To see this, just notice that $L^1(Q)$ can be identified with the set of all functions $f : \supp(Q) \to \mathbb R$, which is a finite dimensional vector space. Hence any dense subspace of $L^1(Q)$ equals $L^1(Q)$ so that the WEP and the weak PRP coincide. In particular,
every extremal measure $Q$ satisfies the WEP.}
\end{remark}

\begin{remark}
\emph{In general, the set of semi-static trading strategies is not closed in $L^1(Q)$, as it is showed in the article \cite{ALS}. More precisely, in their Theorem 1.1, the authors construct a discrete-time model, defined on a countable sample space, and a sequence of semi-static strategies converging in $L^p$, for every $p \geq 1$, to some limit which cannot even be dominated by the final outcome of a semi-static strategy. The problem of whether this would hold even if $Q$ is an extremal measure in $\mathcal M(\mu,\nu)$ is still open.}
\end{remark}

\section{Some sufficient conditions for the WEP}\label{suff}
In this section we provide some easily verifiable sufficient conditions for the WEP to hold.
\subsection{Intersection lemma}
We start with some preliminary results showing that the WEP (as well as extremality) entails quite a strong constraint on the intersection of the image sets $Y(x)$, for $x \in X$.

\begin{lemma} [Intersection Lemma under the WEP] \label{2points}
Assume that the WEP holds for $S$ and let $x_1, x_2$ be two distinct points in $S_X$. Then
  \begin{equation}\label{ineq2points} | Y(x_1) \cap Y(x_2) | \le 2.\end{equation}
\end{lemma}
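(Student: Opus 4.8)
The plan is to argue by contradiction: assume $|Y(x_1) \cap Y(x_2)| \ge 3$ and exhibit a single function $f : S \to \mathbb{R}$ that admits no representation of the form (\ref{WEPf}), contradicting the WEP. The key observation is that the term $\psi(y)$ in (\ref{WEPf}) depends only on the second coordinate, so it cancels when we compare the values of $f$ along the two ``columns'' $x_1$ and $x_2$ at a common ordinate $y$. Concretely, if the WEP held and $y \in Y(x_1) \cap Y(x_2)$, then both $(x_1, y)$ and $(x_2, y)$ lie in $S$, and subtracting the two representations gives
\begin{equation*}
f(x_1, y) - f(x_2, y) = \big[\varphi(x_1) - \varphi(x_2) - h(x_1) x_1 + h(x_2) x_2\big] + \big[h(x_1) - h(x_2)\big]\, y.
\end{equation*}
Thus the map $y \mapsto f(x_1, y) - f(x_2, y)$, restricted to the common image $Y(x_1) \cap Y(x_2)$, must coincide with an affine function of $y$ whose slope and intercept are constants determined by the values of $\varphi$ and $h$ at $x_1, x_2$.

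First I would pick three distinct points $y_1, y_2, y_3 \in Y(x_1) \cap Y(x_2)$, which exist by the contradiction hypothesis. An affine function of $y$ is determined by its values at two points, so the three quantities $d_j := f(x_1, y_j) - f(x_2, y_j)$, $j = 1, 2, 3$, are forced to make the points $(y_j, d_j)$ collinear. But $f$ is an \emph{arbitrary} function on $S$, so I am free to prescribe its values at the six points $(x_i, y_j)$ independently (setting $f = 0$ on the rest of $S$). Choosing, for instance, $f(x_2, y_j) = 0$ for all $j$, together with $f(x_1, y_1) = f(x_1, y_2) = 0$ and $f(x_1, y_3) = 1$, yields $d_1 = d_2 = 0$ and $d_3 = 1$; the points $(y_1, 0), (y_2, 0), (y_3, 1)$ are not collinear precisely because $y_1, y_2, y_3$ are distinct. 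This contradicts the affineness forced by the WEP, establishing (\ref{ineq2points}).

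The only step requiring real insight is the reduction itself, namely recognizing that taking the ``column difference'' eliminates $\psi$ and thereby converts the WEP constraint into the clean statement that $f(x_1, \cdot) - f(x_2, \cdot)$ must be affine on the common abscissae. Once this is in hand, the contradiction is merely the elementary fact that three points with distinct $y$-coordinates need not lie on a line, so the choice of the offending $f$ is immediate. I expect no technical obstruction beyond this observation: because the WEP is an \emph{exact}, purely algebraic statement on the finite set of relevant points of $S$, no integrability or $L^1(Q)$-limiting considerations enter, and the argument is insensitive to the relative ordering or sign of $x_1, x_2, y_1, y_2, y_3$.
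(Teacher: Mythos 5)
Your proof is correct, but it takes a genuinely different route from the paper's. The paper also argues by contradiction with a carefully chosen $f$, but it works \emph{within} each column: writing the increment ratios of $f(x_i,\cdot)$ over $y_1<y_2<y_3$, the slope $h(x_i)$ enters only as an additive constant, so the WEP forces the convexity type (increasing versus decreasing increment ratios) of $f(x_1,\cdot)$ and of $f(x_2,\cdot)$ on the common points to be dictated by $\psi$ alone, hence to agree; one then picks $f$ with strictly increasing increment ratios in the first column and strictly decreasing ones in the second, obtaining contradictory inequalities for the increment ratios of $\psi$. You instead subtract \emph{across} columns at a common ordinate, which eliminates $\psi$ entirely and turns the WEP into the exact statement that $y\mapsto f(x_1,y)-f(x_2,y)$ is affine on $Y(x_1)\cap Y(x_2)$; three distinct common points then rule out what is precisely the indicator function of the single path $(x_1,y_3)$. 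The two reductions are dual --- the paper eliminates $(\varphi,h)$ to constrain $\psi$, you eliminate $\psi$ to constrain $(\varphi,h)$ --- and yours extracts the stronger conclusion, since affineness of the column difference implies equality of the columns' second differences. Your version also needs no ordering of $y_1,y_2,y_3$ and no inequality bookkeeping, and it is closer in spirit to the $S$-affine/2-net machinery developed later in the paper (compare the indicator-function argument in the proof of Theorem \ref{thm2net} and the Extended Intersection Lemma, Corollary \ref{extended}), whereas the paper's convexity phrasing of Lemma \ref{2points} stands somewhat apart from that later development.
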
 
  
\begin{proof} Assume that there exist two distinct points $x_1,x_2 \in S_X$ such that $ Y(x_1) \cap Y(x_2) \supset \{ y_1,y_2,y_3\}$ with $y_1 < y_2 < y_3$. Hence, one can choose a function \(f: S \to \mathbb R \) such that \(f(x_1,\cdot)\) and $f(x_2,\cdot)$ have, respectively, strictly increasing and strictly decreasing increment ratios over the set $\{y_i$, $i=1,2,3\}$, i.e. 
\begin{equation}\label{increment1} \frac{f(x_1,y_2) - f(x_1,y_1)}{y_2 - y_1} < \frac{f(x_1,y_3) - f(x_1,y_2)}{y_3 - y_2} \end{equation}
and
\begin{equation}\label{increment2} \frac{f(x_2,y_2) - f(x_2,y_1)}{y_2 - y_1} > \frac{f(x_2,y_3) - f(x_2,y_2)}{y_3 - y_2}.\end{equation}
Since the WEP holds for $S$, $f$ can be represented as
\[ f(x,y) = \varphi (x) + h(x) (y-x) - \psi (y), \quad x \in S_X, \quad y \in Y(x),\]
for some functions $h,\varphi : S_X \to \mathbb R$ and $\psi : S_Y \to \mathbb R$. Hence (\ref{increment1}) and (\ref{increment2}) become
\[ \frac{\psi(y_2) - \psi(y_1)}{y_2 - y_1} < \frac{\psi(y_3) - \psi(y_2)}{y_3 - y_2}, \quad \frac{\psi(y_2) - \psi(y_1)}{y_2 - y_1} > \frac{\psi(y_3) - \psi(y_2)}{y_3 - y_2}.\]
leading to a contradiction.
\end{proof}

Even though we do not know in general the relationship between WEP and extremality, we can show that in the intersection lemma above the WEP can be replaced by the extremality property while keeping the same conclusion.

\begin{lemma} [Intersection Lemma under extremality] \label{inter-extr}
Assume that $Q$ is extremal. Let $S$ be the support of $Q$ and $x_1, x_2$ two distinct points in $S_X$. Then
  \begin{equation}\label{ineq2pointsExtr} | Y(x_1) \cap Y(x_2) | \le 2.\end{equation}
\end{lemma} 
  
\begin{proof} 
Let us proceed by contradiction and assume that there are at least three distinct points $\{y_1, y_2, y_3\}$ in  $Y(x_1) \cap Y(x_2) $. We are going to build a perturbation
of $Q$ in $\mathcal M(\mu,\nu)$, showing that $Q$ cannot be extremal. Consider the positive number 
\[ c := \inf \{Q(x_i, y_j): i = 1,2 , j =1,2,3 \} > 0.\]
Let us start with the path $(x_1, y_1)$ and perturbate its probability by some number $\alpha$, so getting a new probability weight $Q_1 (x_1,y_1)= Q(x_1,y_1)+\alpha$. Consider now the path $(x_2, y_1)$ and perturbate it, in order
to preserve the total mass at $y_1$, by $-\alpha$, i.e. define $Q_1(x_2,y_1)= Q (x_2,y_1) -\alpha$. In the same way we associate to the path $(x_2,y_2)$ a perturbation $\beta$,
to the path $(x_1, y_2)$ the opposite perturbation $-\beta$, and lastly $\gamma$ to the path $(x_1, y_3)$ and $-\gamma$ to the path $(x_2, y_3)$.
By choosing $\alpha, \beta, \gamma$ small enough, this procedure leads to a new probability measure $Q_1$ on $S$. By construction the total mass at each point $y_j$, for $j = 1,2,3$, is preserved.

On the other hand, the mass $\mu(\{x_1\})$ at the point $x_1$ is preserved if and only if $\alpha-\beta+\gamma=0$,
and the conservation of the mass at $x_2$, i.e. $\mu(\{x_2\})$, gives the same condition. It remains to check the martingale property, giving the same condition for both points $x_1$ and $x_2$, which is $\alpha y_1  -\beta y_2 + \gamma y_3=0$. Solving this system gives $\alpha = \beta - \gamma$
and $\beta (y_1-y_2) = \gamma (y_1-y_3)$. We can choose $\gamma$ sufficiently small so that $\max\{ |\alpha| , |\beta| , |\gamma|\} <c$, which guarantees that the perturbation $Q_1$ constructed above belongs to $\mathcal M(\mu,\nu)$. 

Finally, performing an analogue perturbation whose weights have the same absolute value but opposite signs than those in $Q_1$, we obtain another measure $Q_2$ in $\mathcal M(\mu,\nu)$ such that $Q=(Q_1+Q_2)/2$, which contradicts the extremality of $Q$.  
\end{proof}

\subsection{The 2-link property}

In this section we introduce the notion of ``2-link property'', which gives a sufficient condition for the WEP to hold on a given subset $S$ of $X \times Y$. 
This property can be viewed as a strengthening of the necessary condition given in the intersection Lemma \ref{2points}. 
It is simple to formulate and at the same time it gives an easy method to generate quite a rich family of supports of extremal measures.

\begin{definition}[2-link property]
We say that $S$ has the  \emph{$2$-link property} if \emph{there exists a numbering} $S_X=(x_n)_{n\ge 1}$ such that for all $n\ge 1$ we have
\begin{equation}\label{2LP}
 | Y(x_n) \cap \bigcup_{i=1}^{n-1} Y(x_i) | \le 2, \tag{2LP}
\end{equation}
with the convention $\bigcup_{i=1}^0 = \emptyset$.
\end{definition}

With a slight abuse of terminology we will sometimes use (\ref{2LP}) with the meaning of ``2-link property''. It will be clear from the context.

\begin{proposition}\label{rec-wep}
If $S$ has the  \emph{$2$-link property}, then the WEP holds for $S$.
\end{proposition}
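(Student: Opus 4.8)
The plan is to construct the three functions $h,\varphi:S_X\to\mathbb R$ and $\psi:S_Y\to\mathbb R$ by a single induction along the numbering $S_X=(x_n)_{n\ge 1}$ furnished by the $2$-link property, processing one mesh $\{x_n\}\times Y(x_n)$ at a time. Write $A_n:=Y(x_n)\cap\bigcup_{i=1}^{n-1}Y(x_i)$ for the set of second-coordinate values already encountered before stage $n$; by (\ref{2LP}) we have $|A_n|\le 2$. The guiding idea is that at stage $n$ the values $\psi(y)$ for $y\in A_n$ are already fixed (they were defined at some earlier stage), so they impose at most two linear constraints on the two free parameters $\varphi(x_n),h(x_n)$; once these are chosen, the remaining equations at $x_n$, namely those for the genuinely new values $y\in Y(x_n)\setminus A_n$, are used to \emph{define} $\psi(y)$ there. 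Note that no integrability or measurability needs to be tracked, since the WEP is a purely pointwise existence statement.

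More precisely, I would maintain the invariant that after stage $n-1$ the function $\psi$ is defined exactly on $\bigcup_{i=1}^{n-1}Y(x_i)$ and that (\ref{WEPf}) holds for every $(x_i,y)\in S$ with $i<n$. At stage $n$ I distinguish three cases according to $|A_n|$. If $A_n=\emptyset$ I set $\varphi(x_n)=h(x_n)=0$, leaving no constraint. If $A_n=\{y'\}$, the single equation $f(x_n,y')=\varphi(x_n)+h(x_n)(y'-x_n)-\psi(y')$ is solved e.g. by $h(x_n)=0$ and $\varphi(x_n)=f(x_n,y')+\psi(y')$. If $A_n=\{y',y''\}$ with $y'\ne y''$, the pair of equations at $y'$ and $y''$ forms a $2\times 2$ linear system in $(\varphi(x_n),h(x_n))$ whose coefficient matrix
\[\begin{pmatrix}1 & y'-x_n\\ 1 & y''-x_n\end{pmatrix}\]
has determinant $y''-y'\ne 0$, hence admits a (unique) solution. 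In all cases I then \emph{extend} $\psi$ to each new value $y\in Y(x_n)\setminus A_n$ by the forced formula $\psi(y):=\varphi(x_n)+h(x_n)(y-x_n)-f(x_n,y)$, so that (\ref{WEPf}) holds for those pairs by construction.

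It remains to verify that this yields well-defined functions on all of $S$. Since at stage $n$ the values $\psi(y)$ are assigned only for $y\in Y(x_n)\setminus A_n$, that is, for $y$ not lying in $\bigcup_{i<n}Y(x_i)$, every $y\in S_Y$ receives a value exactly once, namely at the first stage at which it appears as a second coordinate; in particular $\psi$ is never redefined, so the invariant is preserved. As $\bigcup_{n}Y(x_n)=S_Y$ and the $x_n$ exhaust $S_X$, the three functions are defined everywhere needed and (\ref{WEPf}) holds for every $(x,y)\in S$, which is exactly the WEP for $S$. No passage to a limit is involved, even when $S_X$ is countably infinite, because each value is fixed once and for all at a finite stage.

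The only genuinely non-bookkeeping step is the solvability of the constraint system when $|A_n|=2$, and this is immediate from the determinant computation above, the non-vanishing $y''-y'\ne 0$ being guaranteed by the distinctness of the two previously seen ordinates. The remainder is the verification that the inductive choices never conflict, which is ensured by the defining feature of the $2$-link numbering: each new mesh re-uses at most two old ordinates, leaving exactly enough freedom in $(\varphi(x_n),h(x_n))$ to absorb them while the new ordinates freely accommodate $\psi$.
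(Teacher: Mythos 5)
Your proof is correct and follows essentially the same route as the paper: induction along the $2$-link numbering, with the at most two previously seen ordinates determining $(\varphi(x_n),h(x_n))$ (solvable since $y'\ne y''$) and the new ordinates used to define $\psi$ by the forced formula. The only difference is cosmetic—you treat the cases $|A_n|\in\{0,1\}$ explicitly and phrase solvability via the $2\times 2$ determinant, whereas the paper writes out the solution formulas directly—so nothing further is needed.
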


\begin{proof}
Let $f: S\to \mathbb R$ be any measurable real-valued function, we want to find functions $\varphi, h : S_X \to \mathbb R$ and $\psi : S_Y \to \mathbb R$ such that $f(x,y) +\psi(y) =  \varphi(x) + h(x)(y-x)$ on $S$. We construct such functions by induction using the condition (\ref{2LP}), which we assume to be satisfied for at least one numbering $S_X=(x_n)_{n \ge 1}$. Take the first element $x_1$ and consider all the points $y \in Y(x_1)$. Pick arbitrarily two such points, say $y _1, y _2 \in Y(x_1)$, take any two real numbers $\psi_1, \psi_2$ and set $\psi(y_1):=\psi_1$ and $\psi (y_2):=\psi_2$. We want to have
\[ f(x_1, y_i) + \psi(y_i) = \varphi(x_1) + h(x_1)(y_i - x_1), \quad i=1,2,\]
so that the parameters $\varphi(x_1),h(x_1)$ of the affine function in $y \in Y$, $y \mapsto \varphi(x_1) + h(x_1)(y-x_1)$, are determined by the two points on the LHS in the equality above. As a consequence, the other values of the function $\psi(y)$ for $y \in Y(x_1) \setminus \{y_1,y_2\}$ are also determined via the equality
\[ \psi(y) = f(x_1, y) -  \varphi(x_1) - h(x_1)(y - x_1).\]
Now, assume that we have constructed the functions $h,\varphi : (x_i)_{i=1}^{n-1} \to \mathbb R$ and $\psi: \cup_{i=1}^{n-1} Y(x_i) \to \mathbb R$ such that
\[ f(x_i ,y) = \varphi(x_i) + h(x_i) (y-x_i) -\psi(y), \quad y\in Y(x_i), \quad 1 \le i \le n-1 .   \]
Consider the next point $x_n$ in the given numbering of $S_X$ satisfying the condition (\ref{2LP}). The latter implies that there exist at most two distinct points, say, $y_1,y_2 \in \cup_{i=1}^{n-1} Y(x_i)$ such that $y_i \in Y(x_n)$ for $i=1,2$. Let us consider first the case where these points are exactly two. Thus, the two values $\psi(y_i)$, $i=1,2$, have already been determined and so are $f(x_n,y_i)+\psi(y_i)$, $i=1,2$. This identifies completely and without any ambiguity the parameters $\varphi(x_n)$, $h(x_n)$ in the $y$-affine part of the following WEP representation
\[   f(x_n, y) + \psi(y) = \varphi(x_n) + h(x_n)(y - x_n), \quad y \in Y(x_n).\]
Indeed we have
\begin{eqnarray*}\label{hphi} h(x_n) &=& \frac{f(x_n,y_2) - f(x_n,y_1) + \psi(y_2) - \psi(y_1)}{y_2 - y_1},\\
\label{phi} \varphi(x_n) &=& f(x_n, y_1) - h(x_n)(y_1 - x_n) + \psi(y_1),\\
\label{psi} \psi(y) &=& f(x_n, y) -\varphi(x_n) - h(x_n) (y-x_n), \quad y \in Y(x_n)/\{y_1,y_2\}.
\end{eqnarray*}
Doing so, we have extended the functions $h,\varphi$ to the finite set $(x_i)_{i=1}^n$ and the function $\psi$ to the set $\cup_{i=1}^n Y(x_i)$. To complete this part we need to consider also the cases when the intersection $(\cup_{i=1}^{n-1} Y(x_i)) \cap Y_n$ is empty or contains only one point, say $y_1$. In the latter case, the construction is similar with the only difference that, while $\psi(y_1)$ is fixed, the value $\psi(y_2)$ can be chosen arbitrarily. In the former case, i.e. the intersection is empty, one can proceed as at the beginning of this proof.

Finally, by the induction principle, we can conclude that there exist functions $h, \varphi : S_X \to \mathbb R$ and $\psi: S_Y = \cup_{n\ge 1} Y(x_n) \to \mathbb R$ such that WEP($f$) holds for any arbitrary function $f$.
\end{proof}

\begin{remark}
{\rm It is easy to construct conditional supports with infinitely many points satisfying (\ref{2LP}). This is in contrast to what happens for martingale measures without 
constraints on the marginals, where the only extremal points have a two-point conditional support (compare to Lemme A in Dellacherie \cite{dell} and to Theorem 6 in Jacod and Shiryaev \cite{JS}). 
From a financial perspective, this is clearly due to the fact that the extremal points in $\mathcal M(\mu,\nu)$ correspond to semi-statically complete models as in Theorem \ref{douglas2}, where in particular one 
is allowed to trade statically in infinitely many European options.} 
\end{remark}

\begin{example}[``Binomial tree'']
{\rm Any probability $Q \in \mathcal M(\mu,\nu)$ whose conditional supports have two points, i.e. $| Y(x) | =2$ for all $x \in X$, is extremal. Indeed, property (\ref{2LP}) is trivially satisfied. } 
\end{example}

\begin{example}[Hobson and Klimmek \cite{HK} ``Trinomial tree''] \label{exHK}
{\rm Hobson and Klimmek \cite{HK} constructed an optimal martingale optimal transport whose conditional support is fully characterized as follows: there exist $a<b$ and two decreasing maps $p$ and $q$ such that for those $x$ with $x\in Y(x)$ we have \begin{itemize}
\item $Y(x) =\{x\}$ if $x<a$ or $x>b$,
\item $Y(x) = \{ p(x),x,q(x)\}$ otherwise, with $p(x)<a$ and $q(x) >b$.
\end{itemize}
Moreover for those $x$ which do not belong to $Y(x)$ we have $Y(x) =\{p(x),q(x)\}$. One can see that the property (\ref{2LP}) is satisfied in this case as well. Indeed, let $X$ be the (countable) support of $\mu$ and $X=X_{<a} \cup X_{[a,b]} \cup X_{>b}$, with $X_{>a} := \{ x \in X : x >a\}$ (the other two sets are defined analogously). Consider any numbering for those three sets, i.e. $X_{>a} = (x_n ^a)$, $X_{>b} = (x_n ^b)$ and $X_{[a,b]} = (\bar x_n)$. Hence by alternating elements of each sequence we get a numbering for $X$, given by $(x_n) = (x_1^a , x_1 ^b, \bar x_1, \ldots)$ which satisfies (\ref{2LP}). 
Note that Hobson and Klimmek optimal coupling with $\mu \wedge \nu =0$ is a binomial tree. More on this support can be found in Example \ref{exHK2}.}
\end{example}

In the paper \cite{BJ} Beiglb\"ock and Juillet introduce the fundamental notion of left-monotone martingale transport plan (see Definition 1.4 therein) as follows: a martingale transport plan $\pi$ on $\mathbb R \times \mathbb R$ is called \emph{left-monotone} if there exists a Borel set $\Gamma \subset \mathbb R \times \mathbb R$ with $\pi(\Gamma)=1$ and such that whenever $(x,y^- ),(x,y^+ ),(x',y') \in \Gamma$ we cannot have 
\begin{equation} x<x' \quad \textrm{and} \quad y^- < y' < y^+ . \label{left-mon} \end{equation} In Theorem 5.1 in \cite{BJ}, it is proved that there exists a unique left-monotone transport plan in $\mathcal M(\mu,\nu)$, which is denoted by $\pi_{\textrm{lc}}$ and called \emph{left curtain}.
The right curtain $\pi_{\textrm{rc}}$  is defined similarly just by replacing (\ref{left-mon}) with the following forbidden pattern: whenever $(x,y^- ),(x,y^+ ),(x',y') \in \Gamma$ we cannot have \begin{equation} x>x' \quad \textrm{and} \quad y^- < y' < y^+ . \label{right-mon} \end{equation}

\begin{proposition} Assume that there exists a strictly decreasing (resp. strictly increasing) numbering for $S_X$, i.e. $S_X=(x_n)_{n \ge 1}$ with $x_1 > x_2 > \cdots$ (resp. $x_1 < x_2 < \cdots$). Then, the left 
(resp. right) curtain $\pi_{\textrm{lc}}$ (resp. $\pi_{\textrm{rc}}$) satisfies the property (\ref{2LP}). In particular, it satisfies the WEP and so, under the assumptions in Proposition \ref{WEP-extr},
it is extremal in $\mathcal M(\mu,\nu)$. \end{proposition}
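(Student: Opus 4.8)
The plan is to verify the 2-link property (\ref{2LP}) for the given strictly decreasing numbering directly from the definition of left-monotonicity, and then to conclude via Propositions \ref{rec-wep} and \ref{WEP-extr}. Write $S=\supp(\pi_{\textrm{lc}})$ and let $\Gamma$ be a Borel set witnessing left-monotonicity, so that $\pi_{\textrm{lc}}(\Gamma)=1$ and the forbidden pattern (\ref{left-mon}) never occurs on $\Gamma$. The first step is to observe that, under Assumption \ref{main-ass2}, every element of $S$ is an atom carrying strictly positive mass; since $\pi_{\textrm{lc}}(\Gamma)=1$ forces $\pi_{\textrm{lc}}(\Gamma^c)=0$, no atom can lie in $\Gamma^c$, and hence $S\subseteq\Gamma$. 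Consequently the forbidden pattern (\ref{left-mon}) may be applied to any three points of the support itself.

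The core of the argument is then a one-line contradiction. Let $S_X=(x_n)_{n\ge 1}$ with $x_1>x_2>\cdots$ be the given numbering, fix $n\ge 1$, and suppose towards a contradiction that the intersection $Y(x_n)\cap\bigcup_{i=1}^{n-1}Y(x_i)$ contains three distinct points $y_1<y_2<y_3$. Since $y_1,y_3\in Y(x_n)$ we have $(x_n,y_1),(x_n,y_3)\in S$; and since $y_2$ lies in the union, there is some index $i\le n-1$ with $(x_i,y_2)\in S$, where $x_i>x_n$ by the strict decrease of the numbering. Taking $x=x_n$, $x'=x_i$, $y^-=y_1$, $y^+=y_3$ and $y'=y_2$, all three points belong to $S\subseteq\Gamma$ and satisfy $x<x'$ together with $y^-<y'<y^+$, which is exactly the configuration (\ref{left-mon}) excluded by left-monotonicity. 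This contradiction shows $|Y(x_n)\cap\bigcup_{i=1}^{n-1}Y(x_i)|\le 2$ for every $n$, i.e. (\ref{2LP}) holds.

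The strictly increasing case is handled symmetrically: numbering $S_X$ as $x_1<x_2<\cdots$ makes $x_n$ the largest of $x_1,\dots,x_n$, so the index $i\le n-1$ producing the middle point $y_2$ now yields $x_i<x_n$, and the same three points realise the forbidden pattern (\ref{right-mon}) of the right curtain. In both cases (\ref{2LP}) holds, so the WEP follows from Proposition \ref{rec-wep}, and extremality in $\mathcal M(\mu,\nu)$ when $S_X$ or $S_Y$ is finite follows from Proposition \ref{WEP-extr}. The only step requiring more than a direct reading of the definitions is the reduction $S\subseteq\Gamma$ in the first paragraph: one must justify that the $\pi_{\textrm{lc}}$-null exceptional set $\Gamma^c$ contains no atom of the support, which is immediate here because the countability assumption makes every support point an atom of positive mass. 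Everything else is a verbatim instance of the forbidden pattern.
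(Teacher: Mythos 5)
Your proof is correct and follows essentially the same route as the paper's: three points in the intersection $Y(x_n)\cap\bigcup_{i<n}Y(x_i)$, with the middle one attached to an earlier (hence strictly larger) $x_i$, produce exactly the forbidden pattern (\ref{left-mon}), and the conclusion follows from Propositions \ref{rec-wep} and \ref{WEP-extr}. The only difference is that you make explicit the reduction $S\subseteq\Gamma$ (every support point is an atom of positive mass, so none can lie in the $\pi_{\textrm{lc}}$-null set $\Gamma^c$), a step the paper's proof takes for granted.
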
 

\begin{proof} Assume by contradiction that all numberings of $S_X$ do not satisfy (\ref{2LP}), hence the decreasing order $x_1 > x_2 > \cdots $ in particular does not fulfil it. Therefore, there exists $k \ge 2$ such that $Y(x_k) \cap (\cup_{1\le i\le k-1} Y(x_i))$ contains three distinct points $y',y'',y'''$ in $Y$. We can order them as $y^u > y^m >y^l$. There exist $x_i$ with $i=1,\ldots, k-1$ such that $y^m \in Y(x_i)$. Then, we have found that $(x_k,y^j)$, with $j\in \{u,m,l\}$, belongs to the support of the left curtain together with $(x_i,y_m)$, where we recall that $x_i > x_k$. This is exactly the forbidden mapping (\ref{left-mon}) in the left curtain definition (see also \cite[Figure 1]{BJ}). Hence, the left curtain support satisfies (\ref{2LP}) and it satisfies the WEP (cf. Proposition \ref{rec-wep}). As a consequence, under the assumptions in Proposition \ref{WEP-extr}, the left curtain is extremal in $\mathcal M(\mu,\nu)$. 
The proof for the right curtain is similar.
\end{proof}

Now we provide an example showing that the $2$-link property is not necessary for the WEP.

\begin{example} \label{patras}
{\rm The picture below shows a subset $S$ of $X \times Y$, with $X = \{x_i\}_{i=1}^4$ and $Y=\{y_j\}_{j=1}^6$, which does not have the two-link property 
and nonetheless one can check by direct verification that the WEP is fulfilled (see Example \ref{patrasWEP}).

\begin{figure}[H]
\centering
\includegraphics[scale=1]{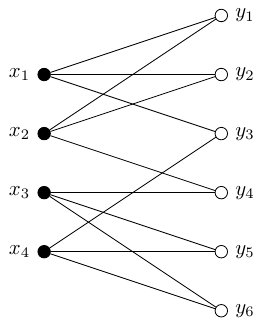}
\end{figure}
\noindent We will come back to this picture in the next section (see Example \ref{patras2}).}
\end{example}

\begin{example}\label{infinite-2LP}
{\rm We conclude with an example of an infinite support fulfilling \eqref{2LP}. Consider $S_X = (x_n)_{n \ge 1}$ for some given sequence of positive numbers such that $|Y(x_n)|=n$, for all $n \ge 1$, and satisfying the properties
\[ |Y(x_2) \cap Y(x_1)| =1, \quad \left |Y(x_{n+1}) \cap \bigcup_{i=1} ^n Y(x_i)\right | = 2, \quad n \ge 2.\]
Clearly with this construction we have that $S_Y = \cup_{n \ge 1} Y(x_n) $ is infinite as well. One possible picture of the first four iteration steps is the following

\begin{figure}[H]
\centering
\includegraphics[scale=1]{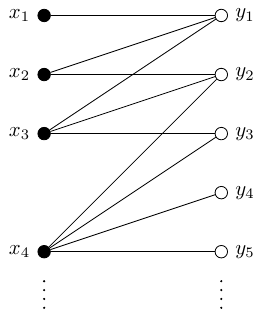}
\end{figure}
This is just one possible example with infinitely many points on both sides of the support of many more that can be provided using the definition of (2LP). Indeed, we stress once more that (2LP) is a very constructive property by its own nature.}
\end{example}

\subsubsection{Relation to graph theory}
We conclude this section by showing that the 2-link property is very much related to the notion of $k$-degeneracy in graph theory as in, e.g., \cite{lick}. In particular, we see how this unexpected relation could provide alternative characterizations of (\ref{2LP}) as well as a way to generate subsets of $X \times Y$ satisfying (\ref{2LP}). We will use a little terminology of graph theory, for whom we refer to Diestel's book \cite{diestel}. 

Let $G=(V,E)$ be a graph with vertex set $V$ and edge set $E$. For our purposes, we allow $G$ to have multiple edges. Moreover, $V$ and $E$ can be infinite countable sets. The degree $d(v)$ of a vertex $v\in V$ is the number of edges incident with $v$. The smallest degree among the vertices of $G$ is called \emph{minimum degree} of $G$ and is denoted by $\delta(G)$.
Moreover, a subgraph $H$ of a graph $G$ consists of a subset of the vertices of $G$ and a subset of the edges of $G$ which together form a graph. The \emph{subgraph induced} by a set $U$ of vertices of $G$, denoted by $\langle U \rangle$, has $U$ as its vertex set and contains all edges of $G$ incident with two vertices of $U$.
\begin{definition}[\cite{lick}]
A graph $G$ is said to be $k$-degenerate, for $k$ a nonnegative integer, if for each induced subgraph $H$ of $G$, we have $\delta (H) \le k$.
\end{definition}

The following proposition states the equivalence between $k$-degeneracy of a graph, with $k=2$, and a property very similar to (\ref{2LP}). When $G$ is finite and does not have multiple edges, this is just Proposition 1 in \cite{lick}. However, even when $G$ can have countably many vertices and multiple edges, such an equivalence still holds. We provide the proof in the case $k=2$ for reader's convenience.

\begin{proposition}[\cite{lick}] \label{2dege}
$G=(V,E)$ is $2$-degenerate if and only if the set of its vertices $V$ admits an order $V=(v_n)_{n \ge 1}$ such that $d(v_1) \le 2$ and, in the induced subgraph $\langle \{v_1,\ldots, v_{n-1}\}\rangle$ of $G$, we have $d(v_n) \le 2$, for each $n \ge 1$.
\end{proposition}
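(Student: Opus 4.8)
The plan is to prove both implications through the standard device of a \emph{degeneracy (back-degree) ordering}. Given an ordering $V=(v_n)_{n\ge 1}$, I call the \emph{back-degree} of $v_n$ the number of edges (counted with multiplicity) joining $v_n$ to $\{v_1,\dots,v_{n-1}\}$, that is, the degree of $v_n$ inside the induced subgraph $\langle\{v_1,\dots,v_n\}\rangle$; this is exactly how I read the condition $d(v_1)\le 2$ and ``$d(v_n)\le 2$ in $\langle\{v_1,\dots,v_{n-1}\}\rangle$''. The two directions are then treated separately, the finite case being precisely Proposition~1 in \cite{lick}, and the countable case carrying the real work.

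For the implication ``ordering $\Rightarrow$ $2$-degenerate'' I would argue by looking at the top of the order. Let $\langle U\rangle$ be an induced subgraph with $U$ finite, and let $v_m$ be the element of $U$ of largest index. Every neighbour of $v_m$ lying in $U$ then belongs to $\{v_1,\dots,v_{m-1}\}$, so the degree of $v_m$ in $\langle U\rangle$ is at most its back-degree, which is $\le 2$; hence $\delta(\langle U\rangle)\le 2$. This disposes cleanly of all \emph{finite} induced subgraphs, which are the ones that intervene in the application. The genuinely infinite induced subgraphs are more delicate — there is in general no largest index to exploit — and I regard their treatment as part of the countable analysis flagged below rather than as a routine step.

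For the converse, ``$2$-degenerate $\Rightarrow$ ordering,'' the finite case is the classical peeling argument: repeatedly delete a vertex of minimum degree, which is $\le 2$ because the surviving graph is an induced subgraph of $G$ and hence $2$-degenerate, recording the deletions as $u_1,\dots,u_N$; reversing, i.e. setting $v_n:=u_{N+1-n}$, produces the ordering, since at the moment $u_{N+1-n}$ was removed its surviving neighbours were exactly $\{v_1,\dots,v_{n-1}\}$ and there were at most two of them. In the countable case the deletion process need not terminate and the reversal is unavailable, so I would instead build the order \emph{forwards}, producing a nested chain $\varnothing=V_0\subset V_1\subset V_2\subset\cdots$ of finite sets with $V_n=V_{n-1}\cup\{v_n\}$, $\bigcup_n V_n=V$, each $v_n$ having at most two neighbours in $V_{n-1}$.

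This forward construction is where I expect the main obstacle to lie. A naive greedy choice can fail to exhaust $V$ (it may postpone some fixed vertex forever) or get stuck on a vertex that already has three earlier neighbours, and vertices of infinite degree must be inserted early enough that only two of their neighbours precede them. To meet both demands at once I would fix an enumeration $w_1,w_2,\dots$ of $V$ and, at each stage, give priority to placing the least-indexed unplaced $w_i$ that can be added legally, relying on the edge bound $|E(\langle U\rangle)|\le 2|U|$ that $2$-degeneracy forces on every finite $U$ (proved by the same peeling). From this bound one obtains an orientation of $G$ with all out-degrees $\le 2$, via a compactness/selection argument over finite pieces; the delicate point — and the step I expect to demand the most care — is upgrading such a bounded out-degree orientation to one that is \emph{well founded} of order type $\omega$, so that every vertex is listed after only finitely many others. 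For locally finite $G$ this can be organised through K\"onig's infinity lemma applied to the tree of partial legal orderings, while the presence of infinite-degree vertices requires the sharper bookkeeping just described.
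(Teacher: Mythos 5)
Your finite-case arguments are correct and essentially coincide with the paper's: the largest-index trick proves ``ordering $\Rightarrow$ $2$-degenerate'' for finite induced subgraphs, and the peeling argument (your reversed version is the one adapted to the back-degree reading of the statement) handles the converse. But everything beyond the finite case is left unproven in your proposal: you explicitly defer infinite induced subgraphs in one direction, and in the other direction the forward construction (priority enumeration, the edge bound $|E(\langle U\rangle)|\le 2|U|$, bounded out-degree orientations, upgrading to a well-founded order of type $\omega$, K\"onig's lemma for the locally finite case) is a list of intentions, none of which is carried out. Since the paper states this proposition precisely to assert that the Lick--White equivalence ``still holds'' for countably infinite graphs with multiple edges, the unexecuted countable analysis is the entire content of the statement, not a dispensable refinement; this is a genuine gap.

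The comparison with the paper's proof also shows why your plan, as formulated, cannot be completed without first fixing an ambiguity. The paper's definition of $2$-degeneracy quantifies over \emph{all} induced subgraphs, including infinite ones; under that strong hypothesis no compactness or orientation argument is needed in the hard direction, because after deleting any set of vertices the remainder is again an induced subgraph of $G$ and hence contains a vertex of degree at most $2$ --- this stalling-free greedy peeling \emph{is} the paper's proof (though even there it is imperfect: for the infinite star $K_{1,\omega}$ the centre is never peelable, so the greedy order fails to exhaust $V$, and peeling controls the degree towards \emph{later} vertices rather than earlier ones). Your machinery instead targets the weaker hypothesis that every \emph{finite} induced subgraph has minimum degree at most $2$, which is a strictly harder, Erd\H{o}s--Hajnal-type problem and not what the paper asserts. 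Symmetrically, your hesitation about infinite induced subgraphs is not excess caution but points at a real failure: with the literal all-subgraphs definition, the direction ``ordering $\Rightarrow$ $2$-degenerate'' is \emph{false} for infinite graphs, since the $3$-regular infinite tree admits a breadth-first ordering in which every vertex has at most one earlier neighbour, yet its minimum degree is $3$. So that direction can only hold for finite $H$ (which is what your argument, and implicitly the paper's, proves), while the peeling direction needs the strong reading of degeneracy; the two readings cannot be combined into a single equivalence for infinite graphs without running into the counterexamples above. A complete write-up would have to state which notion of induced subgraph is used, prove the converse for finite $H$ only, and repair the exhaustion issue in the peeling --- rather than route through orientations and compactness, which aim at a different (and possibly false) statement.
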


\begin{proof}
Assume that $G$ is 2-degenerate. One can find such an ordering as follows: pick the vertex with the smallest degree, name it $x_1$ and remove it from the graph. Repeat the procedure with the remaining subgraph and iterate. Now, assume that the order is given and there exists an induced subgraph $H$ of $G$ with $\delta (H) > 2$. Choose $n$ sufficiently large so that $V(H)/\{v\} \subset \{v_1, \ldots, v_{n-1}\}$, where $V(H)$ is the vertex set of $H$ and $v$ is one of its vertices. Now, since $\delta (H) > 2$, the degree of $v$ in the induced subgraph $\langle v_1, \ldots, v_{n-1}\rangle$ is strictly bigger than $2$, which contradicts the property of the ordering. 
\end{proof}

We show how 2-degenerate graphs can be used to generate subsets $S \subset X \times Y$ fulfilling the 2-link property. First, notice that $S$ can be viewed as a (possibly infinite) bipartite undirected graph $G=(V,E)$, where $V = X \times Y$ is the set of vertices and $E=S$ is the set of edges so that $e=xy$ is an edge of $G$ if and only if $(x,y) \in S$, i.e. $y \in Y(x)$. For our purpose, let us define a simpler graph with vertex set $X$. Let $G^X = (V^X, E^X)$ be a graph with $V^X = X$ and $E^X$ is such that $x_1x_2 \in E^X$ if and only if $x_1 y$ and $x_2 y$ belong to $E$ for some $y \in Y$, \emph{with the constraint that the same $y$ cannot be used more than twice}. Notice that the graph $G^X$ can have multiple edges. Moreover, different $G^X$ can be constructed starting from the same graph $G$. Let us illustrate this construction in the following example: let $G$ be as in the picture below:

\begin{figure}[H]	
\centering
\includegraphics[scale=1]{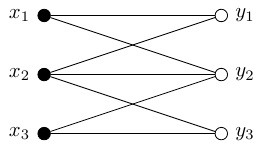}
\end{figure}

Notice that $G$ satisfies (\ref{2LP}). One possible graph $G^X$ produced out of $G$ as described above is given by
\begin{figure}[H]	
\centering
\includegraphics[scale=1]{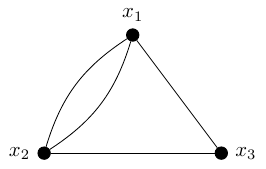}
\end{figure}

\noindent  the others can obtained from the latter by re-labeling the vertices, in other terms such graphs are isomorphic.\footnote{Let $G=(V,E)$ and $G'=(V',E')$ be two graphs. We call $G$ and $G'$ isomorphic if there exists a bijection $\eta : V \to V'$ such that $xy \in E$ if and only if $\eta(x)\eta(y) \in E'$ for all $x,y \in V$. Such a map is called isomorphism (cf. \cite[Sec. 1.1]{diestel}).} Let us denote $\widetilde G^X$ the associated equivalence class. The following equivalence is an immediate consequence of Proposition \ref{2dege} and the way $G^X$ has been defined. 
\begin{proposition}
Let $S$ be a subset of $X\times Y$ and let $G$ be the corresponding graph. If $\widetilde G^X$ is 2-degenerate, then $S$ (or equivalently $G$) satisfies (\ref{2LP}).
\end{proposition}

The article \cite{lick} contains many examples of $k$-degenerate (finite) graphs, that could be used to generate supports of extremal measures in $\mathcal M(\mu,\nu)$. A relevant class of examples is the class of connected 3-regular graphs (all vertices have exactly three neighbours). Such graphs are not 2-degenerate themselves, but deleting any of their vertices leaves a 2-degenerate graph. A way to generate graphs fulfilling (\ref{2LP}) is the following: given a 2-degenerate graph $G^X = (V^X, E^X)$ with vertex set $V^X = X$, we define a graph $G=(V,E)$ with vertex set $V=X \times Y$ and edge set $E$ obtained by splitting any edge $e=x x' \in E^X$ into two edges $xy$ and $x'y$ for some $y \in Y$. The new graph $G$ satisfies (\ref{2LP}) by construction. 

A deeper understanding of (\ref{2LP}) and 2-degeneracy within graph theory goes beyond the scope of this paper. We postpone the study of this interplay for future research.

\subsection{Erasable sets}

In this section we provide another sufficient condition for the WEP. While the 2-link property imposes a kind of compatibility condition among 
paths when adding more and more points, the condition given here is based on erasing paths in a certain way. This is motivated by 
the analogous property in the non martingale case (cf \cite{muk}, Theorem 2.3). We start with the following lemma:

\begin{lemma}\label{reduction}
Let $S \subset X \times Y$ and let $U = \{(x,y) \in S : |X(y)|=1 \text{\emph{ or }} |Y(x)|\leq2\}$. Then the WEP holds for $S$ if and only if 
it holds for $S \setminus U$. \end{lemma}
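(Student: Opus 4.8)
The plan is to prove the two implications separately; the forward implication is essentially a monotonicity remark, while the converse carries the content of the lemma.

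\textbf{Forward direction.} First I would record that the WEP is hereditary under passing to subsets: if the WEP holds for $S$ and $S' \subseteq S$, then it holds for $S'$. Indeed, given $f : S' \to \mathbb{R}$, extend it to $\bar f : S \to \mathbb{R}$ by setting $\bar f \equiv 0$ on $S \setminus S'$; applying the WEP for $S$ to $\bar f$ yields $\bar\varphi, \bar h : S_X \to \mathbb{R}$ and $\bar\psi : S_Y \to \mathbb{R}$ whose restrictions to $S'_X \subseteq S_X$ and $S'_Y \subseteq S_Y$ represent $f$ on $S'$. Taking $S' = S \setminus U$ gives the ``only if'' part.

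\textbf{Converse direction (main step).} Assume the WEP holds for $S \setminus U$ and fix an arbitrary $f : S \to \mathbb{R}$. The idea is to solve the problem first on $S \setminus U$ and then \emph{extend} the triple $(\varphi, h, \psi)$ across the erased points of $U$, each of which carries a free degree of freedom. I would proceed in the following order. (1) Apply the WEP for $S \setminus U$ to $f|_{S \setminus U}$, obtaining $\varphi, h$ on $(S\setminus U)_X$ and $\psi$ on $(S\setminus U)_Y$. (2) For every new $y \in S_Y \setminus (S \setminus U)_Y$ with $|X_S(y)| \ge 2$, set $\psi(y) := 0$. (3) For every new $x \in S_X \setminus (S\setminus U)_X$, let $K(x)$ collect the neighbours $y \in Y_S(x)$ whose $\psi$-value is already defined after steps (1)--(2), and choose $\varphi(x), h(x)$ so that $f(x,y) = \varphi(x)+h(x)(y-x)-\psi(y)$ for every $y \in K(x)$. (4) For every remaining new $y \in S_Y \setminus (S\setminus U)_Y$, which then necessarily has $|X_S(y)| = 1$ with unique neighbour $x_y$, define $\psi(y) := \varphi(x_y)+h(x_y)(y-x_y)-f(x_y,y)$.

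The verification rests on two structural observations. For well-definedness in step (3) I would prove $|K(x)| \le 2$ for every new $x$: since $x$ new means every $(x,y)\in S$ lies in $U$, if $|Y_S(x)| \ge 3$ then each such edge forces $|X_S(y)| = 1$, so all neighbours of $x$ are pendant and receive no $\psi$-value before step (4), whence $K(x) = \emptyset$; otherwise $|Y_S(x)| \le 2$ and trivially $|K(x)| \le 2$. As the $y$'s in $K(x)$ are distinct, the at most two linear equations for $(\varphi(x),h(x))$ are solvable, the relevant $2\times 2$ determinant being $y_2 - y_1 \ne 0$. For the fact that $f$ is represented on \emph{every} edge of $S$, the key point is that no edge of $U$ has both endpoints already determined by $S \setminus U$: if $(x,y) \in U$ had $x \in (S\setminus U)_X$ and $y \in (S\setminus U)_Y$, then $|Y_S(x)| \ge 3$ and $|X_S(y)| \ge 2$, contradicting the defining condition of $U$. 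Hence every edge of $U$ touches a new vertex and is settled by exactly one of steps (2)--(4), while the edges inside $S \setminus U$ are untouched and remain valid from step (1).

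\textbf{Main obstacle.} The delicate part is the bookkeeping that makes the extension globally consistent: checking that each erased edge is resolved exactly once, and that fixing $\psi \equiv 0$ on the ``internal'' new $y$'s in step (2) never over-constrains the low-degree $x$'s bordering them. This is precisely what the degree bound $|K(x)| \le 2$ and the ``no fully-determined edge in $U$'' observation guarantee, once the steps are carried out in the stated order (solve $S\setminus U$, fix the internal new $\psi$-values, solve the new $\varphi,h$, and finish with the pendant $\psi$-values).
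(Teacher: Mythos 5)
Your proof is correct and takes essentially the same route as the paper's: the forward direction by restriction (extension of $f$ by zero), and the converse by extending the triple $(\varphi,h,\psi)$ from $S\setminus U$ across the erased paths, using the free degree of freedom of $\psi$ at pendant $y$'s and the solvability of the at most two linear equations determining $(\varphi(x),h(x))$ at low-degree $x$'s. If anything, your version is more carefully organized than the paper's brief case analysis: the explicit ordering of steps (1)--(4), the observation that no path of $U$ has both endpoints in $(S\setminus U)_X\times(S\setminus U)_Y$, and the bound $|K(x)|\le 2$ make explicit the bookkeeping that the paper leaves implicit.
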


\begin{proof}
It is clear that if the WEP holds for $S$, then it holds for $S\setminus U$. Now, assume that the WEP holds for $S \setminus U$. Let $(x,y) \in U$. Consider first the case when $|X(y)|=1 $ and $x \in X(y^\prime)$ for some $y^\prime$ such that $(x,y^\prime) \in S \setminus U$. In this case the value $\psi(y)$ can be taken as $\psi(y) := f(x,y)-\varphi(x)-h(x)(y-x)$ where $\varphi(x)$ and $h(x)$ are given
by the WEP for  $S \setminus U$. On the other hand, if $|Y(x)|=2 $ with $Y(x) = \{y_1,y_2\}$, then $\varphi(x)$ and $h(x)$ are uniquely determined by the values of $\psi$ on $Y(x)$ via the following equations:
\begin{equation} h(x) = \frac{\psi(y_1) - \psi(y_2)}{y_1 - y_2}, \quad \varphi(x) = \frac{x-y_2}{y_1-y_2}\psi(y_1) + \frac{y_1 - x}{y_1 - y_2}\psi(y_2). \end{equation}
If $|Y(x)|=1$, i.e. $Y(x) = \{y\}$, then the choice $\varphi(x)=\psi(y)+f(x,y)$ and $h(x)=0$ allows to satisfy the WEP along the path $(x, y)$.
\end{proof}
Let us define for a subset $S \subset X \times Y$ the following {\it erasure} transformations:
\begin{eqnarray*}
\mathcal E_{1,x}(S) &=& S \setminus \{(x,y) \in S : |X(y)|=1 \},\\
\mathcal E_{k,y}(S) &=&  S \setminus \{(x,y) \in S :  |Y(x)|=k\}, \quad k=1,2,
\end{eqnarray*}
and finally $\E = \E_{2,y} \circ \E_{1,y} \circ \E_{1,x}$.

\begin{definition} [Erased sets and fully erasable sets] A set $S$ is called \emph{erased} if $\E(S)=S$. Moreover, it is called \emph{fully erasable} if $\E^n (S) \downarrow \emptyset$ as $n \to \infty$, i.e. for all $(x,y) \in S$ there exists $n \in \mathbb N$ such that $(x,y) \notin \E^n(S)$ (where by convention we set $\E^0 = id$).
\end{definition}

\begin{example}[Hobson and Klimmek \cite{HK} ``Trinomial tree'' (cont'd)] \label{exHK2}
{\rm Hobson and Klimmek trinomial tree (as defined in Example \ref{exHK}) is fully erasable: indeed $|Y(x)|=1$ if $x \leq a$ or $x \geq b$. For $x\in (a,b)$, it follows from
the definition of the transition probabilities that $|X(x)|=1$. Since the meshes originating from $x\in (a,b)$ are trinomial meshes, after applying the maps $\mathcal E_{1,x}$ and $\mathcal E_{1,y}$ we are therefore left with binomial meshes,
and eventually $\E(S)= \emptyset$.
}
\end{example}
\begin{example}
{\rm This is a non-trivial example of a fully erasable set $S$:
\begin{figure}[H]	
\centering
\includegraphics[scale=1]{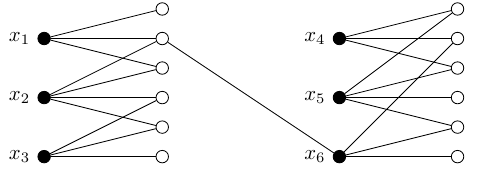}
\end{figure}
\noindent The support above can be erased, using the definition of fully erasable sets, along the following steps: first, applying $\mathcal E_{1,x}$ we get
 \begin{figure}[H]	
\centering
\includegraphics[scale=1]{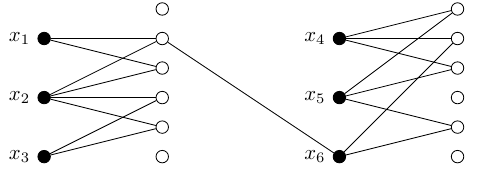}
\end{figure} \noindent
while using $\mathcal E_{1,y}$ and $\mathcal E_{2,y}$ gives
\begin{figure}[H]	
\centering
\includegraphics[scale=1]{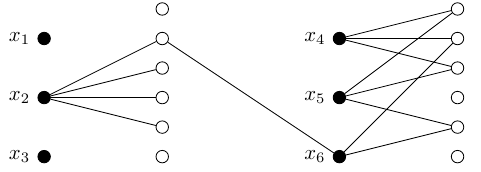}
\end{figure} \noindent
Moreover an immediate application of $\mathcal E_{1,x}$ and then $\mathcal E_{1,y}$ again, we obtain
 \begin{figure}[H]	
\centering
\includegraphics[scale=1]{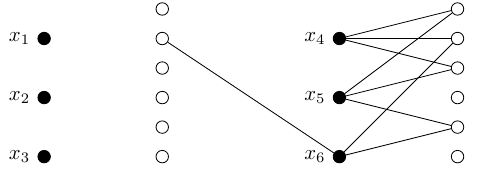}
\end{figure} \noindent
One last application of $\mathcal E_{2,y}$ and finally $\mathcal E_{1,x}$ erases the support fully.
}
\end{example}

\begin{example}[An infinite fully erasable support]
{\rm By induction, we are going to build first an auxiliary non-erasable infinite set, which will be slightly perturbated to obtain a fully erasable one. In a few words, the auxiliary set will have only trinomial meshes, its left-hand points will be ordered and every right-hand point will have exactly two paths coming to it. Moreover, it will be connected in the sense that any right- or left-hand point will be connected to the initial mesh in the iteration.

We start with a trinomial mesh $S_1=M(x_1)$ with $M(x_1)_Y = \{y_1 ^1, y_1 ^2 , y_1 ^3\}$ all distinct, to which we attach one more trinomial mesh $M(x_2)$ with right-hand points $\{ y_2 ^1, y_2 ^2 ,y_2 ^3\}$ chosen so that 
\[ y_2^1 = y_1 ^1, \quad y_2 ^2 = y_1 ^2, \quad \textrm{and } y_2 ^3 \neq y_1 ^i, \quad i=1,2,3.\] By induction we assume to have a set $S_n$ with the desired properties. Hence in order to continue the construction we consider a further trinomial mesh $M(x_{n+1})$ and set $S_{n+1}=S_n \cup M(x_{n+1})$. We need to specify how the new mesh is connected with the previous points. Let $M(x_{n+1}) = \{ y_{n+1}^1,y_{n+1}^2, y_{n+1}^3\}$ (three distinct points), the right-hand points in the previous set $S_n$ can be partitioned as $(S_n)_Y = F^1 _n \cup F^2 _n$ where we denote $F_n ^k := \{ y \in (S_n)_Y : |X(y) | =k\}$ for $k=1,2$. Let us assume that $F_n ^1 \neq \emptyset$. Hence we consider the following two cases: \begin{itemize}
\item $|F_n ^1|=1$, in which case we choose $y_{n+1} ^1 \in F_n ^1$ (it is the only available point), while we pick $y_{n+1}^j \notin (S_n)_Y$ for $j=2,3$.
\item $|F_n ^1| \ge 2$, in which case we choose $y^1_{n+1},y^2_{n+1} \in F_n ^1$ and $y_{n+1}^3 \notin (S_n)_Y$, hence after adding the new mesh we have $|X(y_{n+1} ^j)| =2$ for $j=1,2$.
\end{itemize}
Then the set $S_{n+1}$ has all the required properties and it satisfies $F_{n+1}^1 \neq \emptyset$. Set now $S := \cup_{n \ge 1} S_n$. It is readily checked that every left-hand point in $S$ belongs to exactly three paths, that every right-hand point belongs to exactly two paths, and by construction every path is connected to the initial mesh $M(x_1)$. In particular, we have that $S$ is not fully erasable. Moreover, the sequence $(x_n)_{n\ge 1}$ can be taken strictly increasing.

Pick now any right-hand point $y$ in $S_Y$, which belongs exactly to two paths, say $(z_1, y)$ and $(z_2,y)$, and replace them by $(z_1, y+\varepsilon)$ and $(z_2,y-\varepsilon)$ where $\varepsilon$ is such that neither $y+\varepsilon$ nor $y-\varepsilon$ belongs to $S_Y$. Let $S(\varepsilon)$ stand for the new set. Then one can show that it is fully erasable. Indeed take any path connecting $x_1$ to $y$. By induction from $y$, every sub-path $(x,z)$ can be erased, since it will either belong to a binomial mesh, or it will satisfy $|X(z)|=1$. So eventually $M(x_1)$ will be erased too. 

Choose now the smallest element in $S_X$ (well defined since $(x_n)_{n \ge 1}$ is strictly increasing) such that its mesh has not been erased yet. Since it was also connected to $x_1$ in $S$, there is a sub-path which will either belong to a binomial mesh, or it will satisfy $|X(z)|=1$ along the former path connecting it to $x_1$. So we can erase inductively from this sub-path. Applying iteratively such a scheme will fully erase the set $S(\varepsilon)$.}
\end{example}

The next result shows that the full erasability implies the WEP.

\begin{proposition} \label{prop:fullyWEP}
Assume $S$ is fully erasable. Then the WEP holds for $S$.
\end{proposition}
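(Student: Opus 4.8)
The plan is to prove that full erasability implies the WEP by reverse induction along the erasure process, using Lemma~\ref{reduction} as the key reduction step. The intuition is that the erasure maps $\mathcal E_{1,x}$, $\mathcal E_{1,y}$ and $\mathcal E_{2,y}$ remove exactly the kinds of paths that Lemma~\ref{reduction} allows us to add back without obstructing the WEP: paths $(x,y)$ with $|X(y)|=1$ (the value $\psi(y)$ is free to be fixed by $f$ once $\varphi(x),h(x)$ are known) or with $|Y(x)|\le 2$ (the affine part $\varphi(x),h(x)$ is determined by the already-fixed values of $\psi$). Since $S$ is fully erasable, every path is removed after finitely many applications of $\mathcal E=\mathcal E_{2,y}\circ\mathcal E_{1,y}\circ\mathcal E_{1,x}$, so running the construction backwards should build the required functions on all of $S$.

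First I would observe that $\mathcal E_{1,x}$, $\mathcal E_{1,y}$ and $\mathcal E_{2,y}$ each remove a set of the form appearing in Lemma~\ref{reduction}, namely a subset of $U=\{(x,y)\in S:|X(y)|=1 \text{ or }|Y(x)|\le 2\}$. Hence a single application of any one of these maps, and therefore of their composition $\mathcal E$, preserves the WEP in both directions: by Lemma~\ref{reduction}, the WEP holds for $S$ if and only if it holds for $\mathcal E(S)$. A subtle point to verify is that the cardinalities $|X(y)|$ and $|Y(x)|$ used to define the erasure maps are computed in the current set; when several paths are stripped simultaneously by one map this is consistent with Lemma~\ref{reduction}, but when passing from $\mathcal E_{1,y}$ to $\mathcal E_{2,y}$ within the definition of $\mathcal E$ one must check that erasing the $|Y(x)|=1$ paths first does not invalidate the subsequent bookkeeping; I would argue this by noting that each individual erasure already falls under $U$ for the set on which it acts.

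Next I would set up the induction. For each $(x,y)\in S$, full erasability gives a least $n=n(x,y)$ with $(x,y)\notin\mathcal E^{n}(S)$. Writing $S_n:=\mathcal E^n(S)$, we have $S=S_0\supset S_1\supset\cdots$ with $\bigcap_n S_n=\emptyset$. Starting from the trivial fact that the WEP holds for the empty set, and applying the equivalence $\text{WEP}(S_{n+1})\Leftrightarrow\text{WEP}(S_n)$ supplied by Lemma~\ref{reduction} at each stage, I would conclude by induction that the WEP holds for every $S_n$, and in particular for $S_0=S$. To make this an honest statement about a single function $f:S\to\mathbb R$ rather than a mere chain of biconditionals, I would phrase it constructively: given $f$, restrict it to each $S_n$, build the representation on $S_\infty=\emptyset$ vacuously, and at each step extend the already-constructed $\varphi,h,\psi$ from $S_{n+1}$ to $S_n$ exactly by the explicit formulas in the proof of Lemma~\ref{reduction} (choosing $\psi(y)$ freely when $|X(y)|=1$, and reading off $\varphi(x),h(x)$ from the $\psi$-values when $|Y(x)|\le 2$).

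The main obstacle is the passage from a finite descending chain to the genuinely infinite case. When $S$ is countable the sets $S_n$ need not stabilize, so one cannot simply ``stop'' the induction; instead each individual path $(x,y)$ is handled at its own finite level $n(x,y)$, and the extension formulas are local, touching only the values $\varphi(x),h(x)$ and possibly a single $\psi(y)$. I would therefore take care to argue that these local definitions are mutually consistent across the whole of $S$: the value $\psi(y)$ is fixed exactly once, either when some path $(x,y)$ with $|X(y)|=1$ is erased (so $y$ has a unique $x$-partner and no conflict arises) or as an input determined at an earlier level, and $\varphi(x),h(x)$ are likewise determined once per $x$. Verifying that no path receives conflicting assignments — essentially that the erasure order yields a well-defined global solution — is the heart of the argument, and it rests on the observation that the quantities prescribed by Lemma~\ref{reduction} depend only on data already committed at strictly later erasure levels.
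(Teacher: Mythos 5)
Your per-step reduction is sound and matches the paper's use of Lemma \ref{reduction}: each of $\E_{1,x},\E_{1,y},\E_{2,y}$ removes only paths that lie in the set $U$ of that lemma relative to the set it acts on, so indeed $\mathrm{WEP}(R)\Leftrightarrow\mathrm{WEP}(\E(R))$ for any $R$. The genuine gap is in the induction you hang on this equivalence. You run it \emph{downward} along the decreasing chain $S=S_0\supset S_1\supset\cdots$, $S_n=\E^n(S)$, anchored at ``the WEP holds for the empty set''. But full erasability, as defined in the paper, only says $\bigcap_n \E^n(S)=\emptyset$; it does \emph{not} say $\E^N(S)=\emptyset$ for some finite $N$. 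When the erasure never terminates, the empty set is the intersection of the $S_n$ but is not equal to any of them, your induction has no base case, and the chain of biconditionals $\mathrm{WEP}(S_0)\Leftrightarrow\mathrm{WEP}(S_1)\Leftrightarrow\cdots$ never reaches a statement known to be true. Your argument therefore proves the proposition only when the erasure terminates in finitely many steps (in particular when $S$ is finite), not for general fully erasable countable sets.

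Your proposed repair for the infinite case does not close this gap, because the recursion it describes is not well founded: you want each value to depend ``only on data already committed at strictly later erasure levels'', but with infinitely many nonempty levels nothing is ever committed. Concretely, take $S_X=\{x_n\}_{n\geq 1}$ and $Y(x_n)=\{a_n,b_n,a_{n+1},b_{n+1}\}$, where for $n\geq 2$ the points $a_n,b_n$ are shared by $x_{n-1}$ and $x_n$, while $a_1,b_1$ belong to $x_1$ only. Round $n$ of $\E$ erases exactly the four paths of $x_n$, so $S$ is fully erasable but $\E^n(S)\neq\emptyset$ for every $n$. In your scheme, $\psi(a_1)$ is to be read off from $(\varphi(x_1),h(x_1))$ (erasure of $(x_1,a_1)$ by $\E_{1,x}$), the pair $(\varphi(x_1),h(x_1))$ from $\psi(a_2),\psi(b_2)$ (erasure by $\E_{2,y}$), these from $(\varphi(x_2),h(x_2))$ at level $2$, and so on: an infinite regress with no free value anywhere to ground it, so the construction assigns no values at all. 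Note that the WEP \emph{does} hold for this $S$ (it has the 2-link property, cf.\ Proposition \ref{rec-wep}), but the solution must be propagated \emph{forward}, from arbitrary choices of $\psi(a_1),\psi(b_1)$ through $x_1,x_2,\ldots$ — the opposite direction to yours. This is precisely why the paper's own proof inducts \emph{upward}, on the increasing complements $S\setminus\E^n(S)\uparrow S$: it starts from the genuinely empty set $S\setminus\E^0(S)$ and extends the decomposition to the newly erased paths at each stage, which is the orientation that survives non-terminating erasure.
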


\begin{proof}
Notice that $S$ is fully erasable if and only if $S = \cup_{n\ge 0} \E^n (S) ^c$ (with the convention $\E^0 =id$). Hence to prove that 
the WEP holds for $S$ we can proceed by induction over $n$ as follows. First $\E^0 (S)^c = \emptyset$, so the WEP holds for $n=0$. Assume that 
the WEP holds for $\E^n (S)^c$ and let us prove that it holds for $\E^{n+1}(S) ^c $ as well. By definition of erasure transformations, any 
$(x,y) \in \E^{n+1}(S)^c \setminus \E^n (S)^c = \E^n (S) \setminus \E^{n+1}(S)$ satisfies $|X(y)| =1$ or $|Y(x)| \in \{1,2\}$. Hence to extend the WEP 
from $\E^n (S)^c$ to the path $(x,y)$, we can proceed as in the proof of Lemma \ref{reduction}. 
\end{proof}

\begin{remark} \emph{Being fully erasable is not a necessary condition for the WEP: one can directly check that the set in Example \ref{patras} is not fully erasable. On the other hand, it can be proved that it is necessary in some special cases (see Proposition \ref{5-erasability}).}\end{remark}

Note that removing a path $(x,y)$ such that $|X(y)|=1$ may prevent the remaining set from being the support of a martingale measure. Indeed, consider the set $\{ (x,y_i): i=1,2\}$ 
with $0<y_1<x<y_2$.  We shall need in the sequel the following weakenings of the notion of erased set, which do preserve the martingale property:

\begin{definition} [$1$-erased and $2$-erased sets] \label{2-erased}
A set $S$ is called \emph{$1$-erased} (resp. $2$-erased) if $\mathcal E_{1,y}(S) =S$ (resp. $\mathcal E_{2,y} \circ \mathcal E_{1,y}(S) =S$), or, equivalently, if $|Y_S(x)|\geq 2$ (resp. $|Y_S(x)|\geq 3$) for all $x \in S_X$.
\end{definition}

\begin{remark} \emph{According to the definition above, 1-erased or 2-erased sets may have right-hand free paths, i.e. paths $(x,y)$ such that $|X(y)|=1$, unlike erased sets $S$
for which any point in $S_X$ is connected through $S$ to at least three points in $Y$,
and any points in $S_Y$ is connected to at least two points in $X$ through $S$. This implies that erased sets have at least two points in their projection onto $X$, and three points
on their projections onto $Y$.} \end{remark}

We conclude this section by investigating the relation between the notions of fully erasability and the $2$-link property. It turns out that in the finite case they are equivalent, 
while when $X$ is infinite one can easily build an example of a support satisfying the latter and not the former.

\begin{proposition}
Assume $S_X$ is finite. Then $S$ is fully erasable if and only if it has the $2$-link property.
\end{proposition}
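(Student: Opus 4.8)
The plan is to prove the two implications separately, in each case using the finiteness of $S_X$ to drive either an induction (for the easy direction) or a ranking of the points of $S_X$ (for the hard direction).

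For the implication ``$2$-link property $\Rightarrow$ fully erasable'', I would argue by induction on $N := |S_X|$. Fix a numbering $S_X = (x_1, \dots, x_N)$ witnessing (\ref{2LP}) and concentrate on the last point $x_N$. Since $S_X = \{x_1,\dots,x_N\}$, every $y \in Y(x_N)$ is either shared with some earlier point, and there are at most two such $y$ by (\ref{2LP}), or else \emph{private} to $x_N$, meaning $X(y) = \{x_N\}$ and hence $|X(y)| = 1$. Consequently one application of $\mathcal{E}_{1,x}$ deletes all private paths of $x_N$, leaving $|Y(x_N)| \le 2$; the maps $\mathcal{E}_{1,y}$ and $\mathcal{E}_{2,y}$ applied next do not change $|Y(x_N)|$ (they remove whole vertices, not individual paths of $x_N$) and therefore erase $x_N$ entirely. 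Thus $(\mathcal{E}(S))_X \subseteq \{x_1,\dots,x_{N-1}\}$, and since passing to a subset only shrinks each $Y(x)$ and the unions in (\ref{2LP}), the restricted numbering still witnesses (\ref{2LP}) for $\mathcal{E}(S)$. The induction hypothesis then yields $\mathcal{E}^n(\mathcal{E}(S)) \downarrow \emptyset$, hence $\mathcal{E}^n(S) \downarrow \emptyset$.

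For the converse, since $S$ is fully erasable and $S_X$ is finite, each $x \in S_X$ disappears from the projection at a well-defined moment: let $r(x)$ be the least $n$ with $x \notin (\mathcal{E}^n(S))_X$. I would number $S_X = (x_1, \dots, x_N)$ with non-increasing ranks $r(x_1) \ge \dots \ge r(x_N)$ (ties broken arbitrarily), so that the points erased earliest receive the largest indices, and claim this numbering witnesses (\ref{2LP}). Fix $n$ and set $r = r(x_n)$. Because $x_n$ is erased at step $r$ by $\mathcal{E}_{1,y}$ or $\mathcal{E}_{2,y}$, once $\mathcal{E}_{1,x}$ has been applied at step $r$ it has $|Y(x_n)| \le 2$. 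The crux is to show that every $y \in Y_S(x_n) \cap \bigcup_{i<n} Y_S(x_i)$ still lies in $Y(x_n)$ at that instant, which then forces $|Y_S(x_n) \cap \bigcup_{i<n} Y_S(x_i)| \le 2$. The governing observation is that, before $x_n$ is erased, an individual path $(x_n,y)$ can leave the support only through $\mathcal{E}_{1,x}$, i.e. only when $y$ becomes right-hand free with unique preimage $x_n$. Given $i < n$ with $y \in Y_S(x_i)$, the condition $r(x_i) \ge r$ keeps $x_i$ alive up to the start of step $r$; one checks that $(x_i,y)$ cannot be severed while $x_i$ is alive except via $\mathcal{E}_{1,x}$, which would make $x_i$ the unique preimage of $y$ and thereby force $(x_n,y)$ to have already vanished, contradicting the observation. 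Hence both $(x_i,y)$ and $(x_n,y)$ survive until step $r$; moreover $|X(y)| \ge 2$ there, so $\mathcal{E}_{1,x}$ leaves $(x_n,y)$ intact. This places $y$ among the at most two surviving elements of $Y(x_n)$, giving the bound.

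I expect the delicate point to be exactly this bookkeeping in the converse: one must argue that a path joining $x_n$ to an earlier-numbered point $x_i$ can never be cut before $x_n$ itself is erased, which rests on the facts that cutting a single path forces the corresponding $y$ to become right-hand free and that $x_n$ and $x_i$ cannot simultaneously be that unique surviving preimage. Tracking which sub-map ($\mathcal{E}_{1,x}$, $\mathcal{E}_{1,y}$, or $\mathcal{E}_{2,y}$) is responsible for each deletion, and using that $\mathcal{E}_{1,y}$ and $\mathcal{E}_{2,y}$ remove whole vertices rather than individual edges, is where care is needed. Finiteness of $S_X$ enters twice: to guarantee that the ranks $r(x)$ are finite and can be linearly ordered, and to make the induction in the first implication terminate.
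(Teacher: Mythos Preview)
Your argument is correct in both directions and is close in spirit to the paper's proof, though your treatment of the converse is more explicit. For the implication ``(\ref{2LP}) $\Rightarrow$ fully erasable'' you and the paper argue identically: peel off the last point $x_N$ of the witnessing numbering with one application of $\mathcal E$, then recurse. For the converse, the paper runs a backward induction showing that if $\mathcal E^k(S)$ has (\ref{2LP}) then so does $\mathcal E^{k-1}(S)$, by appending to the existing numbering the vertices just erased (those with $|Y(\cdot)|\le 2$ after $\mathcal E_{1,x}$). Unrolling that induction produces exactly your ordering by non-increasing erasure rank $r(\cdot)$, so the two proofs build the same numbering; you simply verify (\ref{2LP}) for it in one shot rather than step by step. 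Your explicit ``first moment'' analysis of when a shared path $(x_n,y)$, $(x_i,y)$ can be severed is a clean way to justify the key bound, and is somewhat more transparent than the paper's sketch (``adding them up \ldots\ transfers the $2$-link property''), which leaves the reader to check that the private paths removed by $\mathcal E_{1,x}$ do not enlarge the relevant intersections.

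Two small points of wording, neither affecting correctness: in the first direction, saying that $\mathcal E_{1,y}$ and $\mathcal E_{2,y}$ ``do not change $|Y(x_N)|$'' is slightly misleading, since they set it to $0$ when they act; what you mean (and use) is that they either leave $Y(x_N)$ untouched or erase $x_N$ entirely. In the second direction, $x_n$ could in principle be erased already by $\mathcal E_{1,x}$ at step $r$ (if all its surviving paths are private), not only by $\mathcal E_{1,y}$ or $\mathcal E_{2,y}$; but then $|Y(x_n)|=0$ after $\mathcal E_{1,x}$ and your inequality $|Y(x_n)|\le 2$ holds trivially, so the argument goes through unchanged.
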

\begin{proof}
Let $|X|=n$ for some nonnegative integer $n \geq 1$. Assume that $S$ satisfies the $2$-link property. Hence, $S$ can be constructed as the union of finitely many sets $(S_k)_{k=1}^n$ as follows: $(S_1)_X$ is a singleton and at each further step $S_k$ is obtained by adding to $(S_{k-1})_X$ a new point, say $x_k$, such that the property (\ref{2LP}) is fulfilled. Now, starting from the bottom of such a construction, notice that any pair $(x_n,y)$ can be erased by applying the transformation $\mathcal E$ since the pairs $(x_n ,y)$ with $|X(y)| =1$ will be cancelled first and then any other pairs $(x_n,y)$ will follow since, after the first cancellation, they would satisfy $|Y(x_n)| \le 2$. Iterating $\mathcal E$ will have the same effect on every other pairs $(x_k,y)$, $y \in Y(x_k)$, $1 \le k \le n-1$, of the support $S$, which will be reduced to the empty set. Hence $S$ is fully erasable. 

Now, assume that $S$ is fully erasable. Since $S_X$ is finite, $S$ is fully erasable if and only $\E ^n (S) = \emptyset$ for some $n \geq 1$. The empty set trivially satisfies the $2$-link property. Now, we can proceed by induction. We assume that $\E^k(S)$ satisfies the $2$-link property and we want to prove that $\E^{k-1}(S)$ does as well. By definition of the erasure transformation $\E = \E_{2,y} \circ \E_{1,y} \circ \E_{1,x}$, we have that $\E^k(S)$ has been obtained by erasing from $\E^{k-1}(S)$ some pair $(x,y) \in S$ in the following order: first those satisfying $|X(y)| =1$, second those with $|Y(x)| =1$ and finally those having $|Y(x)| =2$. The key observation is that adding them up to $\E^k (S)$ to go back to $\E^{k-1}(S)$ transfer the $2$-link property to the bigger set $\E^{k-1}(S)$. Hence, by the induction principle, we can conclude that $S=\E^0(S)$ satisfies the $2$-link property.\end{proof}

\begin{example}
{\rm Here we show how to construct a set $S \subset X \times Y$, with $X$ and $Y$ countable subsets of $\mathbb R_+$, which satisfies the $2$-link property and which is not fully erasable. We want the support $S$ to satisfy $|Y(x)| \geq 3$ and $|X(y)| \geq 2$ for all $(x,y) \in S$. We start from some $x_1 \in X$ with $Y(x_1) = \{y_{1,1},y_{1,2},y_{1,3}\}$. Then we add a second point $x_2 \in X \setminus \{x_1\}$ with two links with $x_1$ and one free $y$-point attached to it, i.e. $Y(x_2) = \{ y_{1,1},y_{1,2},y_{2,1}\}$ for some $y_{2,1} \in Y \setminus Y(x_1)$. We continue the construction in such a way that $X(y_{2,1})$ has at least two points in $X$. Hence, we add $x_3 \in X \setminus \{x_1,x_2\}$ with, for instance, $Y(x_3) = \{y_{1,3},y_{2,1},y_{3,1}\}$. So far, the $2$-link property is fulfilled by construction. Now, consider the left-hand free point of $x_3$, i.e. $y_{3,1}$, and add a fourth point $x_4$ such that $Y(x_4) = \{y_{3,1},y_{4,1},y_{4,2}\}$ and so on. By iterating we will eventually get a set $S$ with the required properties. The next picture illustrates the first four steps of the construction:}
\begin{figure}[H]	
\centering
\includegraphics[scale=1]{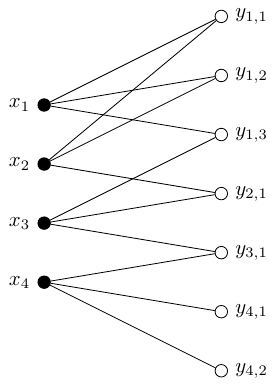}
\end{figure}
\end{example}

\section{A geometrical characterization of the WEP}\label{q-affine-functions-and-2-nets}

The goal of this section is to provide a characterization of sets $S \subset X \times Y$ satisfying the WEP. 
The main result is stated in Theorem \ref{thm2net}, which is based on the new notion of deadlocks (introduced in Definition \ref{deadlock}).

Now, we introduce the preliminary intuitive notion of connectedness in the following definition. Given a binary relation $R$ on $S$, we recall that the transitive closure of $R$ is defined as the smallest transitive relation over $S$ containing $R$. 

\begin{definition}\label{connected} Let $S \subset X \times Y$.
We say that two paths $(x,y)$ and $(x',y')$ in $S$  are \emph{neighbours} if either $x=x'$ or $y=y'$. The transitive closure of this relation is an equivalence relation. The corresponding equivalence
classes of $S$ will be called \emph{connected subclasses} (of $S$). A set $S$ with a single subclass will be called \emph{connected}.
\end {definition}

This notion of connectedness induces the following decomposition property, which will allow us to work with 1-erased connected sets without loss of generality.

\begin{proposition}\label{decompWEP}
Let $S \subset X \times Y$ be a 1-erased set and let $S=\cup_{n \ge 1} S_n$ be its decomposition into mutually disjoint connected subclasses. Then each $S_n$, for $n \ge 1$, is 1-erased.
Moreover, the WEP holds for $S$ if and only if it holds for each set $S_n$, $n \ge 1$.
\end{proposition}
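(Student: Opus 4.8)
The plan is to reduce everything to one structural observation: distinct connected subclasses have pairwise disjoint projections onto both $X$ and $Y$. Indeed, suppose some $x\in X$ appeared in two different subclasses, say $(x,y)\in S_n$ and $(x,y')\in S_m$. Since these two paths share their first coordinate, they are neighbours in the sense of Definition \ref{connected}, hence lie in the same equivalence class, forcing $n=m$. The symmetric argument with the second coordinate handles the $Y$-projections. Consequently $S_X=\bigsqcup_{n\ge1}(S_n)_X$ and $S_Y=\bigsqcup_{n\ge1}(S_n)_Y$ are disjoint unions, and this is precisely what decouples the representation problem across the pieces.

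First I would establish that each $S_n$ is $1$-erased. Fix $(x,y)\in S_n$. Because $S$ is $1$-erased, $|Y_S(x)|\ge2$, so there is some $y'\neq y$ with $(x,y')\in S$; as $(x,y)$ and $(x,y')$ are neighbours, $(x,y')\in S_n$ as well, whence $|Y_{S_n}(x)|\ge2$. Since $(x,y)\in S_n$ was arbitrary, this is exactly the $1$-erasedness of $S_n$ as recorded in Definition \ref{2-erased}.

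For the equivalence of the WEP, the forward implication is immediate: given an arbitrary $f:S_n\to\mathbb R$, extend it to $S$ (for instance by setting it to $0$ on $S\setminus S_n$), apply the WEP for $S$ to obtain $\varphi,h:S_X\to\mathbb R$ and $\psi:S_Y\to\mathbb R$ realizing (\ref{WEPf}), and restrict $\varphi,h$ to $(S_n)_X$ and $\psi$ to $(S_n)_Y$; the representation continues to hold on $S_n\subset S$. For the converse, given $f:S\to\mathbb R$ I would solve the problem on each subclass separately: the WEP for $S_n$ yields $\varphi_n,h_n:(S_n)_X\to\mathbb R$ and $\psi_n:(S_n)_Y\to\mathbb R$ representing $f|_{S_n}$. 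Because the projections are disjoint by the opening observation, gluing these families into single functions $\varphi,h$ on $S_X$ and $\psi$ on $S_Y$ (setting $\varphi:=\varphi_n$ on $(S_n)_X$, and likewise for $h$ and $\psi$) is well-defined and produces a global representation of $f$ on all of $S$.

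The argument is essentially routine; the only point that does any genuine work is the disjointness of the projections of the connected subclasses. Once that is in hand, both the $1$-erasedness of the pieces and the gluing of the WEP representations follow with no further difficulty, since the defining equation (\ref{WEPf}) involves $x$ and $y$ only through values of $\varphi,h$ at $x$ and of $\psi$ at $y$, and no index $n$ ever shares such a value with another.
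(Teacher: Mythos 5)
Your proof is correct and follows essentially the same route as the paper: $1$-erasedness of each subclass via the neighbour relation, the forward WEP implication by restriction, and the converse by pasting the subclass representations. The only difference is that you make explicit the pairwise disjointness of the projections $(S_n)_X$ and $(S_n)_Y$ across subclasses --- the point the paper leaves implicit when it says the functions can be ``safely pasted'' together --- which is a clarification of the same argument rather than a deviation from it.
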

\begin{proof} Assume that $S$ is 1-erased and that $S=\cup_n S_n$ is its decomposition into mutually disjoint connected subclasses. Assume by contradiction that $S_n$ is not 1-erased for some $n \ge 1$. 
Hence there exists $(x,y) \in S_n$ such that $\{y\}=Y_{S_n} (x)$. Since $S$ is 1-erased we have $|Y_S (x)| \ge 2$, hence there exists a point $y_0$ in $Y_S (x) \setminus Y_{S_n}(x)$. 
Moreover $(x,y_0)$ and $(x,y)$ are neighbours, which implies that $S_n$ cannot be a connected subclass of $S$. 

If the WEP holds for $S$ then it clearly holds for each subclass $S_n$ as well. Assume now that the WEP holds for every subclass $S_n$, $n\ge 1$. Hence every function $f: X \times Y \to \mathbb R$ satisfies the WEP over each subclass $S_n$, i.e.
\[ f(x,y)= \varphi_n (x) + h_n(x)(y-x) - \psi_n (y), \quad (x,y)\in S_n, \quad n\ge 1,\]
for some functions $\varphi_n , h_n : (S_n)_X \to \mathbb R$ and $\psi_n : (S_n)_Y \to \mathbb R$. Since the subclasses $S_n$ are disjoint, we can safely paste such functions together and get the WEP for $f$ over the whole set $S$.
\end{proof}

\subsection{$S$-affine functions and 2-nets: definitions and properties}

This sub-section and the next one will focus on the two important auxiliary notions of $S$-affine functions and 2-nets. From now on we will work under the following standing assumption:
\begin{assumption}
$S \subset X \times Y$ is a 1-erased set, i.e. $\mathcal E_{1,y}(S) =S$ or equivalently $|Y(x)| \ge 2$ for all $x \in S_X$.
\end{assumption}

\begin{definition}[S-affine functions]
A function $\psi : S_Y \to \mathbb R$ is called  \emph{$S$-affine} if it is affine on each set $Y_S(x)$, i.e., for all $x \in S_X$ there exist $\varphi(x), h(x)$ such that 
\begin{equation}\label{Qaff} \psi(y)=\varphi(x)+h(x)(y-x), \quad \forall y \in Y_S(x). \end{equation}
$\aff(S)$ denotes the set of all $S$-affine functions.
\end{definition}

Basically, an $S$-affine function is a function which coincide on every portion $Y(x)$, $x\in S_X$, with a truly affine function whose slope and intercept might depend on $x$. 
Obviously, affine functions are \(S\)-affine for any \(S\). Moreover \(\aff(S)\) is a vector space.

\begin{remark}\label{CRR}
{\rm Note that the functions \(\varphi, h\) in (\ref{Qaff}) are uniquely defined from \(\psi\) because $|Y_S(x)| \ge 2$, which is due to the fact that $S$ is assumed 1-erased. Indeed, take $y_1,y_2 \in Y_S(x)$ with $y_1 \neq y_2$. 
Being $\psi$ $S$-affine, we have in particular that
\[ \psi(y_i) = \varphi (x) + h(x) (y_i -x),\quad i=1,2. \]
This is a linear system of two equations with two unknowns $h(x),\varphi(x)$, that can be solved explicitly giving
\begin{equation} h(x) = \frac{\psi(y_1) - \psi(y_2)}{y_1 - y_2}, \quad \varphi(x) = \frac{x-y_2}{y_1-y_2}\psi(y_1) + \frac{y_1 - x}{y_1 - y_2}\psi(y_2).  \label{eq-CRR}\end{equation}}
\end{remark}

The other important ingredient of this section is the new notion of 2-net, which we introduce in the following definition.

\begin{definition}[2-net]\label{def:2-net}
A 1-erased set $A \subset X \times Y$  is called \emph{2-net} if every $A$-affine function is affine.
\end{definition}

Intuitively, a 2-net is a subset of $X \times Y$ where the WEP is defined without ambiguity, i.e. modulo
an affine function so that a 2-net has intrinsically the corresponding two degrees of freedom, whence its name.The following property follows from the definition of 2-net:

\begin{proposition}
Every 2-net $A$ is connected.
\end{proposition}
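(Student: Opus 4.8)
The goal is to prove that every 2-net $A$ is connected. The plan is to argue by contraposition: assuming $A$ is \emph{not} connected, I will construct an $A$-affine function which is \emph{not} affine, thereby violating the defining property of a 2-net.

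So suppose $A$ decomposes into at least two mutually disjoint nonempty connected subclasses, and write $A = A_1 \cup (A \setminus A_1)$, where $A_1$ is one such subclass. Since $A$ is 1-erased, Proposition \ref{decompWEP} guarantees that each subclass, in particular $A_1$, is itself 1-erased, so $|Y_{A_1}(x)| \ge 2$ for every $x \in (A_1)_X$; this is what will let me talk about $A_1$-affine functions without degeneracy. The key structural fact I will exploit is that the projections onto $Y$ of distinct connected subclasses are disjoint: if $y \in (A_1)_Y$ and also $y \in (A \setminus A_1)_Y$, then some path $(x,y) \in A_1$ and some path $(x',y) \in A \setminus A_1$ would be neighbours (sharing the second coordinate $y$), forcing them into the same connected subclass — a contradiction. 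The same disjointness holds for the $X$-projections by the analogous argument on the first coordinate.

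Using this disjointness I would then build a witness function. Define $\psi : A_Y \to \mathbb R$ by choosing two genuinely different truly affine functions $\ell_1(y) = a_1 + b_1 y$ and $\ell_2(y) = a_2 + b_2 y$ on $\mathbb R$ (say with $b_1 \neq b_2$), and setting $\psi(y) = \ell_1(y)$ for $y \in (A_1)_Y$ and $\psi(y) = \ell_2(y)$ for $y \in (A \setminus A_1)_Y$. Because the $Y$-projections of the two pieces are disjoint, $\psi$ is well defined on all of $A_Y$. It is $A$-affine: restricted to any single $Y_A(x)$, the point $x$ lies in exactly one subclass (the $X$-projections are disjoint too), and all of $Y_A(x)$ sits inside the $Y$-projection of that same subclass, so $\psi$ agrees on $Y_A(x)$ with the single affine function $\ell_1$ or $\ell_2$ attached to that subclass. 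However $\psi$ is \emph{not} globally affine: since $A_1$ and $A \setminus A_1$ are each 1-erased and nonempty, each $Y$-projection contains at least two distinct points, so $\psi$ takes the values of $\ell_1$ on two points and of $\ell_2$ on two points; with $b_1 \neq b_2$ these four constraints cannot be matched by a single affine function. Thus $\psi \in \aff(A)$ but $\psi$ is not affine, contradicting the assumption that $A$ is a 2-net.

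The main point requiring care — and the step I expect to be the real content — is verifying that the $A$-affine property is genuinely preserved by the piecewise construction, which hinges entirely on the disjointness of the $X$- and $Y$-projections of the subclasses; once that disjointness is established the rest is routine. A secondary subtlety is ensuring the two chosen affine functions are distinct \emph{as functions on the relevant finite sets of points}, not merely as formal expressions: taking $b_1 \neq b_2$ (distinct slopes) and invoking $|Y_{A_1}(x)| \ge 2$ suffices, since two affine functions with different slopes can agree at no more than one point. Hence no global affine function can reproduce $\psi$ on both pieces simultaneously, completing the contrapositive and the proof.
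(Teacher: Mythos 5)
Your proof is correct and follows essentially the same strategy as the paper's: assuming $A$ is disconnected, paste distinct affine functions onto the $Y$-projections of the connected pieces to produce an $A$-affine function that is not affine. In fact your write-up is more careful than the paper's own (you verify the disjointness of projections, well-definedness, and the non-affineness via the two-point argument, all of which the paper leaves implicit), but the underlying idea is identical.
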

\begin{proof}
Assume by contradiction that $A$ is not connected, i.e. there exist at least two disjoint connected subclasses, say $A_1, A_2, \ldots $. Consider some $A_i$-affine function $\psi_i$, for $i \ge 1$. Since $A_i$ is a 2-net for all $i$, 
we have $\psi_i (y) = \alpha_i + \beta_i y$, for all $y \in (A_i)_Y$, for some constant $\alpha_i, \beta_i \in \mathbb R$. Define $\psi(y) := \sum_{i \geq 1} \psi_i (y) \mathbf 1_{\{y \in (A_i)_Y\}}$. 
This is an $A$-affine function, which is not affine. Therefore, $A$ must be connected. 
\end{proof}

The next lemmas give, respectively, an equivalent characterization of 2-nets and a sort of stability property, according to which adding points to a given 2-net preserves the 2-net structure.

\begin{lemma}
A set $A$ is a 2-net if and only if for all $\psi \in \aff(A)$ as in (\ref{Qaff}) we have 
\[ h(x)=h(x'), \quad \forall (x,x') \in (A_X)^2.\]
\end{lemma}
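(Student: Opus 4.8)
The plan is to prove the equivalence by unwinding the definition of a 2-net through the characterization of $A$-affine functions given in Remark \ref{CRR}. Recall that for a 1-erased set, each $A$-affine function $\psi$ determines, for every $x \in A_X$, a uniquely defined slope $h(x)$ and intercept $\varphi(x)$ via the formulas \eqref{eq-CRR}. The assertion ``$\psi$ is (globally) affine'' means precisely that there is a single pair $(\alpha,\beta)$ with $\psi(y)=\alpha+\beta y$ on all of $A_Y$; in the local representation $\psi(y)=\varphi(x)+h(x)(y-x)$ this is equivalent to saying the slope $h(x)$ is the same constant $\beta$ for every $x$ (and then the intercepts automatically match on $A_Y$). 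So the content of the lemma is that being a 2-net is equivalent to forcing the slope function $x \mapsto h(x)$ to be constant across $A_X$.

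For the forward direction I would assume $A$ is a 2-net and take any $\psi \in \aff(A)$. By definition $\psi$ is then affine, say $\psi(y)=\alpha+\beta y$ for all $y \in A_Y$. Fixing any $x \in A_X$ and using two distinct points $y_1,y_2 \in Y_S(x)$ (which exist since $|Y(x)|\ge 2$), the uniqueness of the local slope from \eqref{eq-CRR} gives $h(x)=\frac{\psi(y_1)-\psi(y_2)}{y_1-y_2}=\beta$, independently of $x$. Hence $h(x)=h(x')$ for all $(x,x')\in (A_X)^2$, as required.

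For the converse I would assume that every $\psi\in\aff(A)$ has constant slope, i.e. $h(x)\equiv\beta$ for some $\beta$, and show $\psi$ is affine. Set $\psi(y)=\varphi(x)+\beta(y-x)$ for $y\in Y(x)$. The only possible obstruction is that the intercept data coming from different $x$'s might fail to glue into one global affine function on $A_Y$, i.e. two points $x,x'$ sharing a common $y\in Y(x)\cap Y(x')$ could a priori assign different values of $\varphi(x)-\beta x$ and $\varphi(x')-\beta x'$. But at any shared $y$ we have $\psi(y)=\varphi(x)-\beta x+\beta y=\varphi(x')-\beta x'+\beta y$, forcing $\varphi(x)-\beta x=\varphi(x')-\beta x'$; calling this common constant $\alpha$ on each point yields $\psi(y)=\alpha+\beta y$ there. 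The genuinely delicate step is propagating this consistency across the whole set, and this is where I expect to lean on the preceding structural results: since a 2-net is connected (just proved), any two points of $A_X$ are joined by a finite chain of neighbouring paths, so the value $\alpha=\varphi(x)-\beta x$ is transported unchanged along the chain and is therefore globally constant. Thus $\psi(y)=\alpha+\beta y$ on all of $A_Y$, so $\psi$ is affine and $A$ is a 2-net.

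The main obstacle is this last gluing argument: one must be careful that connectedness is exactly what guarantees the locally-matched intercepts assemble into a single global constant, and that the equivalence classes under the neighbour relation of Definition \ref{connected} do the propagation. I would make sure the chain argument is phrased so that each elementary step (two paths sharing an $x$, or sharing a $y$) preserves the quantity $\varphi(\cdot)-\beta\,(\cdot)$, since sharing an $x$ is trivial and sharing a $y$ is the computation above.
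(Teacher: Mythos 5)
Your proof follows essentially the same route as the paper's: the forward direction via the uniqueness of the local slope in \eqref{eq-CRR}, and the converse by gluing the quantities $\varphi(x)-\beta x$ at shared $y$-points and propagating along chains of neighbouring paths. However, there is a circularity in your converse step: you justify the chain argument by writing ``since a 2-net is connected (just proved)'', but in the converse direction $A$ is \emph{not yet known to be a 2-net} --- that is exactly what you are trying to establish. You cannot apply the proposition ``every 2-net is connected'' to $A$ at that point.

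The gap is genuine but easily repaired, and the repair uses only your hypothesis: if $A$ were disconnected, say with two distinct connected subclasses $A_1$ and $A_2$, then the function $\psi$ defined by $\psi(y)=0$ on $(A_1)_Y$, $\psi(y)=y$ on $(A_2)_Y$, and $\psi(y)=0$ on the rest of $A_Y$ is $A$-affine (each set $Y_A(x)$ lies entirely inside a single subclass, since all paths through a given $x$ are neighbours), yet its local slopes are $0$ on $(A_1)_X$ and $1$ on $(A_2)_X$, contradicting the assumed constancy of $h$. Hence the constant-slope hypothesis itself forces $A$ to be connected, and your chain-propagation argument then goes through. (For what it is worth, the paper's own proof also asserts that ``every two points in $A$ are connected'' without justification, so your write-up mirrors it; but your explicit appeal to the 2-net connectedness proposition is the one formulation that is demonstrably circular, and it should be replaced by the direct argument above.)
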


\begin{proof}
The direct implication is obvious. Let us prove the other direction.
Let $\psi \in \aff(A)$ and let $\beta$ denote the common value of $h(x)$ for $x \in A_X$. Then $\psi(y)=\varphi(x)+\beta (y-x)$ for some function $\varphi(x)$, 
or yet $\varphi(x)- \beta x = \psi(y)-\beta y$. For $x$ and $x'$  with $Y_A(x) \cap Y_A(x') \neq \emptyset$ this yields $\varphi(x)-\beta x=\varphi(x')-\beta x'$. 
Since every two points in $A$ are connected, $\varphi(x)-\beta x=\alpha$ on $A$ for some constant $\alpha$ and the proof is completed.
\end{proof}

\begin{lemma} \label{extends2net}
Let $A,B$ be two 2-nets such that $ | A_Y \cap B_Y | \geq 2$. Then $ A \cup B$ is a 2-net.
\end{lemma}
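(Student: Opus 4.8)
The plan is to take an arbitrary $(A\cup B)$-affine function and show, using the 2-net property of $A$ and of $B$ separately together with the two-point overlap, that it must be globally affine. Before doing so I would record that $A \cup B$ is itself $1$-erased, which is needed for it to qualify as a $2$-net at all: for any $x \in (A\cup B)_X = A_X \cup B_X$ one has $Y_{A\cup B}(x) = Y_A(x) \cup Y_B(x)$, and since $x$ lies in at least one of $A_X, B_X$, and each of $A,B$ is $1$-erased, at least one of these image sets has cardinality $\geq 2$; hence $|Y_{A\cup B}(x)| \geq 2$.

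The core of the argument rests on a restriction observation. Let $\psi \in \aff(A\cup B)$. For each $x \in A_X$ we have $Y_A(x) \subseteq Y_{A\cup B}(x)$, and by definition $\psi$ is affine on $Y_{A\cup B}(x)$; therefore $\psi$ is affine on $Y_A(x)$. This shows that the restriction $\psi|_{A_Y}$ is an $A$-affine function. Since $A$ is a $2$-net, $\psi|_{A_Y}$ is genuinely affine, so there are constants with $\psi(y) = \alpha_A + \beta_A\, y$ for all $y \in A_Y$. By the identical reasoning applied to $B$, there are constants with $\psi(y) = \alpha_B + \beta_B\, y$ for all $y \in B_Y$.

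It remains to reconcile these two affine representations, and this is exactly where the hypothesis $|A_Y \cap B_Y| \geq 2$ enters. On the overlap $A_Y \cap B_Y$ both formulas hold simultaneously, so $\alpha_A + \beta_A\, y = \alpha_B + \beta_B\, y$ for at least two distinct values $y_1 \neq y_2$. Two distinct solutions of this equation force $\beta_A = \beta_B$ and then $\alpha_A = \alpha_B$. Consequently $\psi$ agrees with the single affine function $y \mapsto \alpha_A + \beta_A\, y$ on all of $(A\cup B)_Y = A_Y \cup B_Y$, i.e. $\psi$ is affine. As $\psi$ was an arbitrary $(A\cup B)$-affine function, $A \cup B$ is a $2$-net.

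I do not expect a serious obstacle here; the proof is short. The only points that require care are that the domain-restriction really turns an $(A\cup B)$-affine function into an $A$-affine one (this uses $Y_A(x) \subseteq Y_{A\cup B}(x)$ rather than any deeper structure), and the observation that a single common point would be insufficient to identify the two affine functions—one needs two, which is precisely the content of the assumption $|A_Y \cap B_Y| \geq 2$. One could alternatively phrase the last step through the constant-slope characterisation established in the preceding lemma, but matching intercept and slope at two overlapping points is the most direct route.
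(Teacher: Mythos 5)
Your proof is correct and follows essentially the same route as the paper's: restrict the $(A\cup B)$-affine function to each 2-net to get two affine representations, then use the two common points in $A_Y\cap B_Y$ to force equal slopes and intercepts. The only difference is that you spell out details the paper leaves implicit (the restriction step and the $1$-erasedness of $A\cup B$), which is a welcome addition rather than a divergence.
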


\begin{proof}
Let $\psi$ be a $A \cup B$-affine function. Let $\alpha + \beta y$ be the affine function matching $\psi$ on $A_Y$ and let 
$\gamma + \delta y$ be the affine function matching $\psi$ on $B_Y$. Since $ | A_Y \cap B_Y | \geq 2$ we have $\alpha=\gamma$ and $\beta=\delta$, hence $\psi$ is affine on $A_Y \cup B_Y = (A\cup B)_Y$.
\end{proof}

The following two examples clarify the relation between 2-nets and the 2-link property.

\begin{example}
{\rm Any subset $A$ satisfying (\ref{2LP}) with equality is a 2-net. Indeed, assume that there exists a numbering $A_X =(x_n)_{n\ge 1}$ such that
\begin{equation}\label{2LPeq} | Y(x_n) \cap \bigcup_{i=1}^{n-1} Y(x_i) | = 2 ,\quad n \ge 1.\end{equation}
To show that $A$ is a 2-net we proceed by induction. First, $\{x_1\} \times Y(x_1)$ is trivially a 2-net. Assume now that $A_{n-1} := \{(x_i,y): y \in Y(x_i), i=1,\ldots,n\}$ is a 2-net. 
Since (\ref{2LPeq}) holds for all $n$, we can apply Lemma \ref{extends2net} yielding that $A_{n-1} \cup (\{x_n\} \times Y(x_n))$ is a 2-net. Therefore, $A$ is a 2-net. In particular, the 2-net described in Example \ref{infinite-2LP} provides an example of an infinite 2-net.}
\end{example}
\begin{example}\label{patras2}
{\rm The support described in Example \ref{patras} is also a 2-net. Indeed, both sets $\{x_1,x_2\}$ and $\{x_3,x_4\}$ are 2-nets and they are connected to each other with exactly two links. Hence, their union is a 2-net by Lemma \ref{extends2net}.}
\end{example}

\subsection{$S$-maximal 2-nets}
In this section we introduce the notion of $S$-maximal 2-net and give some properties that will reveal useful later in this section.

\begin{definition}[Maximal 2-net]\label{max2net}
A 2-net $A \subset S$ is $S$-maximal if for any 2-net $A' \subset S$ such that $A \subset A'$, we have $A=A'$.
\end{definition}

\begin{proposition} $S$-maximal 2-nets exist.\end{proposition}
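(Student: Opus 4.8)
The statement to prove is that $S$-maximal 2-nets exist. The plan is to invoke Zorn's Lemma on a suitably constructed partially ordered set of 2-nets contained in $S$, ordered by inclusion, and the main work will be verifying that every chain has an upper bound.

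First I would fix any 2-net $A_0 \subset S$ (for instance, take any $x \in S_X$ together with the mesh $\{x\} \times Y_S(x)$, which is trivially a 2-net since it is connected and any affine function on a single $Y_S(x)$ is genuinely affine). Then I would consider the collection $\mathcal{C} = \{A \subset S : A \text{ is a 2-net and } A_0 \subset A\}$, partially ordered by set inclusion. This collection is nonempty since $A_0 \in \mathcal{C}$. An $S$-maximal 2-net containing $A_0$ is precisely a maximal element of $\mathcal{C}$, so by Zorn's Lemma it suffices to show that every nonempty totally ordered chain $\{A_\lambda\}_{\lambda \in \Lambda}$ in $\mathcal{C}$ admits an upper bound in $\mathcal{C}$.

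The natural candidate for the upper bound is the union $B = \bigcup_{\lambda \in \Lambda} A_\lambda$. I would verify the three defining properties. First, $B \subset S$ and $A_0 \subset B$ are immediate. Second, $B$ is 1-erased: for any $(x,y) \in B$ we have $(x,y) \in A_\lambda$ for some $\lambda$, and since $A_\lambda$ is 1-erased there is a second point in $Y_{A_\lambda}(x) \subset Y_B(x)$, so $|Y_B(x)| \ge 2$. Third, and most importantly, $B$ must be shown to be a 2-net, i.e. every $B$-affine function is affine. Here I would let $\psi \in \aff(B)$ and, using Remark \ref{CRR}, associate to each $x \in B_X$ the uniquely determined slope $h(x)$ and intercept. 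The key claim is that the slope $h$ is constant on $B_X$; by the equivalent characterization of 2-nets proved earlier (the lemma stating $A$ is a 2-net iff $h$ is constant on $A_X$), this will suffice.

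The main obstacle — and the step deserving the most care — is proving that $h$ is constant across the \emph{whole} union, because two points $x, x' \in B_X$ need not lie in a single $A_\lambda$ of the chain. My plan is to exploit connectedness together with the chain structure: the restriction of $\psi$ to any $A_\lambda$ is $A_\lambda$-affine, hence genuinely affine (as $A_\lambda$ is a 2-net), so $h$ is constant on each $(A_\lambda)_X$. Given $x, x' \in B_X$, choose $\lambda, \lambda'$ with $x \in (A_\lambda)_X$ and $x' \in (A_{\lambda'})_X$; since the family is a chain, one of $A_\lambda, A_{\lambda'}$ contains the other, say $A_\lambda \subset A_{\lambda'}$, so both $x$ and $x'$ lie in the single 2-net $A_{\lambda'}$ and therefore share the same slope value. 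Thus $h$ is constant on $B_X$, $\psi$ is affine, and $B$ is a 2-net. Hence $B \in \mathcal{C}$ is an upper bound for the chain, Zorn's Lemma applies, and an $S$-maximal 2-net exists.
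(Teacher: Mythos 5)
Your proposal is correct and takes essentially the same route as the paper: Zorn's Lemma applied to 2-nets in $S$ ordered by inclusion, with the union of a chain shown to be a 2-net because the total ordering forces any two points of the union into a common member of the chain, on which the restriction of a given affine-by-pieces function has one and the same slope (and intercept). You are in fact slightly more careful than the paper, since you also verify nonemptiness of the collection (via a single mesh, which is a 2-net under the standing 1-erasedness assumption) and check that the union of the chain is itself 1-erased.
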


\begin{proof} The existence of maximal 2-nets is guaranteed by an application of Zorn's Lemma (e.g. 1.7 in \cite{AB}). Let $\mathcal A$ denote the class of all 2-nets in $S$ and let $\mathcal A'$ any subclass of $\mathcal A$, totally ordered with respect to set inclusion, i.e. for any two elements $A'_1, A'_2 \in \mathcal A'$ we have either $A'_1 \subset A'_2$ or $A'_2 \subset A'_1$. We need to prove that $\mathcal A'$ has an upper bound in $\mathcal A$. Consider $A_0 := \cup_{A' \in \mathcal A'} A'$, which by definition contains any 2-net in $\mathcal A'$. To conclude, it remains to show that $A_0$ is a 2-net. In order to do so, take an $S$-affine function $f$. By definition, $f$ coincide with an affine function on every 2-net $A'$ with possibly different intercepts and slopes $\varphi_{A'},h_{A'}$. Consider two 2-nets in $\mathcal A'$, say $A'_1,A'_2$. Since they are totally ordered, we have $A'_1 \subset A'_2$ or the opposite. Both situations imply $\varphi_{A_1'} = \varphi_{A_2 '}$ and $h_{A_1'} = h_{A_2 '}$. Therefore, being $A' _i$, $i=1,2$, arbitrary, we have that slopes and intercepts of $f$ are the same on every 2-net $A' \in \mathcal A'$. Since this is true for all $S$-affine functions $f$, we conclude that $A_0$ is a 2-net and Zorn's Lemma applies. 
\end{proof}

\begin{proposition}\label{properties-max2net}
Let $A, B \subset S$ be two $S$-maximal distinct 2-nets. The following properties holds:
\begin{enumerate}
\item[(i)] for all $z \in S_X \setminus A_X$, we have
$$ \quad |Y(z) \cap Y(A) | \leq 1 ;$$
\item[(ii)] $A_X \cap B_X =\emptyset$ and $| A_Y \cap B_Y | \leq 1$.
\end{enumerate}
\end{proposition}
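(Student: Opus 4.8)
The two statements should both be attacked by contradiction, exploiting the defining property of a 2-net (every $A$-affine function is affine) together with the stability Lemma~\ref{extends2net} and the maximality in Definition~\ref{max2net}. The key principle to keep in mind is that the defining feature of a 2-net is \emph{rigidity}: the freedom in an $A$-affine function is exactly the two-dimensional space of global affine functions $y \mapsto \alpha + \beta y$, no more. Any additional point or link that would force this rigidity onto a strictly larger set contradicts maximality, whereas any configuration that would admit a non-affine $S$-affine function contradicts the 2-net property of $A$ or $B$ itself.

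For part (i), I would argue by contradiction: suppose there is $z \in S_X \setminus A_X$ with $|Y(z) \cap Y(A)| \geq 2$. The plan is to show that $A \cup (\{z\} \times Y(z))$ is then a 2-net strictly containing $A$, contradicting $S$-maximality of $A$. To this end, let $\psi$ be any $(A \cup \{z\}\times Y(z))$-affine function. Its restriction to $A_Y$ is affine, say $\psi(y) = \alpha + \beta y$ on $A_Y$, because $A$ is a 2-net. On the mesh $\{z\} \times Y(z)$ the function $\psi$ is affine with some slope and intercept $(h(z),\varphi(z))$. Since $Y(z)$ shares at least two distinct points with $A_Y$, and an affine function on a line is determined by its values at two distinct points, the slope/intercept at $z$ must coincide with $(\beta, \alpha)$. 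Hence $\psi$ is globally affine on $(A \cup \{z\}\times Y(z))_Y$, so the enlarged set is a 2-net; as $z \notin A_X$, this enlargement is strict, contradicting maximality. This establishes $|Y(z) \cap Y(A)| \leq 1$.

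For part (ii), I would handle the two assertions separately but with the same flavour. For $|A_Y \cap B_Y| \leq 1$: if instead $|A_Y \cap B_Y| \geq 2$, then Lemma~\ref{extends2net} gives immediately that $A \cup B$ is a 2-net; since $A$ and $B$ are distinct, $A \cup B$ strictly contains (at least) one of them, contradicting its $S$-maximality. For $A_X \cap B_X = \emptyset$: suppose some $x \in A_X \cap B_X$. Then $Y_A(x)$ and $Y_B(x)$ both lie in $Y(x)$, and since $A$ and $B$ are $1$-erased each contains at least two points; in fact one should observe that $Y_A(x) \cup Y_B(x) \subset A_Y \cap$\,(the common fibre), forcing at least two common points of $A_Y$ and $B_Y$ through $x$ — more carefully, since $Y_A(x)$ has at least two points and all of $Y_A(x) \subset A_Y$, and likewise for $B$, a shared first coordinate $x$ already yields $|A_Y \cap B_Y|\ge 2$ once one checks the fibres overlap, which then reduces this case to the previous one and again produces a $2$-net $A \cup B$ strictly larger than $A$, a contradiction.

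The main obstacle I anticipate is the $A_X \cap B_X = \emptyset$ part: one must argue cleanly that a shared $x$-coordinate forces two common $Y$-points (so that Lemma~\ref{extends2net} applies), since a single shared fibre need not a priori supply two common image points unless one uses the $1$-erased hypothesis ($|Y_A(x)|\ge 2$) and the fact that these image points lie in both $A_Y$ and $B_Y$. The correct route is to note that if $x \in A_X \cap B_X$ then $Y_A(x), Y_B(x) \subseteq Y(x)$ and both have cardinality $\ge 2$; then $A \cup B$ is connected through $x$, and by re-running the affine-rigidity argument of part (i) (the intercept/slope on $A$ and on $B$ must agree because they are pinned down on the common fibre through $x$) one concludes $A \cup B$ is a 2-net, contradicting maximality. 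I would present part (ii) by first disposing of the image-overlap case via Lemma~\ref{extends2net} and then reducing the shared-coordinate case to it.
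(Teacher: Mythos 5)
Your proposal is correct and takes essentially the same route as the paper's proof: part (i) is the enlargement-versus-maximality argument (the paper simply invokes Lemma~\ref{extends2net}, since the mesh $\{z\}\times Y(z)$ is itself a 2-net), the case $|A_Y\cap B_Y|\ge 2$ is handled by Lemma~\ref{extends2net}, and the shared-coordinate case by affine rigidity on the common fibre. You were right to discard your tentative reduction of the case $x\in A_X\cap B_X$ to the image-overlap case (the fibres $Y_A(x)$ and $Y_B(x)$ need not intersect, so a common $x$ does not by itself yield two common $Y$-points); the argument you settle on instead --- the affine functions on $A_Y$ and $B_Y$ are both pinned down on the fibre through $x$, which has at least two points in each of $A_Y$ and $B_Y$ by 1-erasedness, forcing equal slopes and intercepts, so that $A\cup B$ is a 2-net contradicting maximality --- is exactly the paper's.
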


\begin{proof}
Property (i) is a direct consequence of Lemma
\ref{extends2net} and Definition \ref{max2net}. 

Regarding the properties in (ii): assume that there exists $z \in A_X \cap B_X$. Hence $A \cup B$ is connected since both are and they have a point in common. 
Take an $S$-affine function $\psi$. Since both $A$ and $B$ are 2-nets, such a function is affine on $A$ and $B$ separately, with slopes and intercepts, respectively, $\varphi_A, h_A$ and $\varphi_B, h_B$. 
Moreover, $\varphi_A = \varphi_B$ and $h_A =h_B$ since $A_X$ and $B_X$ have the point $\{z\}$ in common: indeed since every 2-net is 1-erased, we have
$|Y(z) \cap A_Y| \geq 2$ and $|Y(z) \cap B_Y| \geq 2$. Therefore, $\psi$ is affine on $A \cup B$ and being $\psi$ arbitrary we have that $A \cup B$ is a 2-net, so contradicting the assumption that $A$ and $B$ are $S$-maximal. Hence, $A_X \cap B_X = \emptyset$. Now, assume that $| A_Y \cap B_Y | \ge 2$. Proceeding as in the proof of Lemma \ref{extends2net}, we can prove that $A \cup B$ is a 2-net strictly bigger than both $A$ and $B$ since they are disjoint, so contradicting the fact that they are $S$-maximal.
\end{proof}

\begin{lemma}\label{add-one-branch}
Let $T \subset X \times Y$ be a $1$-erased and connected set with decomposition $(A_i)_{i=1}^k$ in $k$ maximal 2-nets with $k \ge 1$.
Let $x \in (A_1)_X$ and consider the set $T'=T \cup \{(x, y)\}$ where $(x,y) \notin T$, and $y \in T_Y$.
Then $T'$ decomposes in at most $k$ maximal 2-nets.
\end{lemma}

\begin{proof}
First, we observe that the set $A_1 \cup \{(x,y)\}$ is a 2-net in $T'$. By Definition \ref{def:2-net} any 2-net in $T$ is also a 2-net in $T'$, hence all the sets $A_i$, for $i=2,\ldots ,k$, are 2-nets in $T'$.
Since any 2-net in $T'$ is included in a maximal 2-net in $T'$, there are at most $k$ such sets.
\end{proof}

\begin{remark}
{\rm In the situation of the lemma above, the new cardinality may be any number between $1$ and $k$, depending on the connections between the 2-nets $A_i$: \begin{itemize}
\item If $y \in (A_1)_Y$, then the $A_1'=A_1 \cup \{(x,y)\}$ affine functions are exactly the $A_1$ affine ones, and $T'$ decomposes in the $k$ maximal 2-nets $A_1', A_2,\ldots,A_k$.
\item If $y \in (A_2)_Y$ and if there is a point $z \neq y$ in the intersection $B=(A_1)_Y \cap (A_2)_Y$, then $A_1' \cup A_2$ is a 2-net. Either the $Y$-sections of the other 2-nets $A_i, \; i \geq 3$ have a single intersection  with the $Y$-section of this new 2-net, and the cardinality of the decomposition of $T'$ is $k-1$, or the $T'$ maximal 2-net which contains $B$ contains other sets $A_i$, the cardinality of the decomposition of $T'$ is strictly less than $k-1$, possibly reaching $1$. \end{itemize}}
\end{remark}

\subsection{Saturated 2-nets, deadlocks and the WEP}\label{the-wep-on-2-nets}

In this section we study the relation between the WEP and the new notion of deadlocks of $S$ introduced just below.
By definition, the WEP for a given function \(f\) is defined only up to an \(S\)-affine function. 
Recall that we say that $\textrm{WEP}(f)$ holds on a set $A$ if we have
\begin{equation} f(x,y) = \varphi_A (x) + h_A (x) (y-x) -\psi_A (y), \quad (x,y) \in A, \label{WEPfA}\end{equation}
for some functions $\varphi_A , h_A, \psi_A$.
Since on 2-nets all \(S\)-affine functions are affine, we have immediately the following proposition, whose proof is straightforward and therefore it is omitted.

\begin{proposition}\label{uptoaffine}
Let $f:X \times Y \to \mathbb R$ be a given function. Assume that $\textrm{WEP}(f)$ holds on a 2-net $A$. Then the corresponding decomposition (\ref{WEPfA}) is defined up to an affine function on $A$. \end{proposition}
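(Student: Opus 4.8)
The plan is to unwind the two definitions involved and reduce the claim to the uniqueness portion of Remark \ref{CRR}. Suppose $\textrm{WEP}(f)$ holds on the 2-net $A$ via two decompositions, say with data $(\varphi_A, h_A, \psi_A)$ and $(\varphi_A', h_A', \psi_A')$ as in (\ref{WEPfA}). Subtracting the two representations of $f(x,y)$ kills the function $f$ and leaves, for every $(x,y) \in A$, the identity
\begin{equation}
\bigl(\varphi_A(x)-\varphi_A'(x)\bigr) + \bigl(h_A(x)-h_A'(x)\bigr)(y-x) = \psi_A(y)-\psi_A'(y). \label{plan-diff}
\end{equation}
Writing $\psi := \psi_A - \psi_A'$, $\tilde\varphi := \varphi_A - \varphi_A'$ and $\tilde h := h_A - h_A'$, the right-hand side (\ref{plan-diff}) exhibits $\psi$ as coinciding on every slice $Y_A(x)$ with the affine map $y \mapsto \tilde\varphi(x) + \tilde h(x)(y-x)$. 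That is exactly the statement that $\psi$ is $A$-affine in the sense of (\ref{Qaff}).

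The heart of the argument is then the defining property of a 2-net: since $A$ is a 2-net, every $A$-affine function is affine, so $\psi(y) = \alpha + \beta y$ for some constants $\alpha, \beta \in \mathbb R$ that do not depend on $x$. This is the step that does the real work — it converts the a priori $x$-dependent slopes and intercepts $\tilde\varphi(x), \tilde h(x)$ into genuinely global constants. Concretely, because $A$ is 1-erased each slice has $|Y_A(x)| \ge 2$, so by the uniqueness in Remark \ref{CRR} the pair $(\tilde\varphi(x), \tilde h(x))$ is determined by $\psi$ restricted to $Y_A(x)$; matching $\tilde\varphi(x) + \tilde h(x)(y-x) = \alpha + \beta y$ on those (at least two) points forces $\tilde h(x) = \beta$ for all $x$ and $\tilde\varphi(x) = \alpha + \beta x$ for all $x$.

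Thus the two decompositions differ precisely by the affine function $\psi(y) = \alpha + \beta y$: on the $\psi$-side by $\psi_A - \psi_A' = \alpha + \beta \,(\cdot)$, on the $h$-side by the constant slope $\beta$, and on the $\varphi$-side by $\alpha + \beta\,(\cdot)$. Conversely, adding any affine function $\alpha + \beta y$ to $\psi_A$ and correspondingly adjusting $h_A \mapsto h_A + \beta$ and $\varphi_A \mapsto \varphi_A + \alpha + \beta(\cdot)$ produces another valid decomposition, since the combination $\beta(y-x) = \beta y - \beta x$ reassembles the affine correction exactly. Hence the set of decompositions realizing $\textrm{WEP}(f)$ on $A$ is a single orbit under the two-dimensional space of affine functions, which is the assertion. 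I do not anticipate a serious obstacle here: the only subtlety is invoking 1-erasedness to guarantee $|Y_A(x)| \ge 2$ so that the per-slice data is uniquely pinned down, and this is already built into the standing assumption on $A$ as a 2-net.
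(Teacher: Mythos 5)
Your proof is correct and follows exactly the route the paper intends: the paper omits the proof as ``straightforward,'' its lead-in sentence (``since on 2-nets all $S$-affine functions are affine\dots'') being precisely your key step of subtracting two decompositions, recognizing the difference $\psi_A-\psi_A'$ as $A$-affine hence affine, and then using $|Y_A(x)|\ge 2$ to pin down the slopes and intercepts. Nothing is missing; the converse direction (adding an affine function produces another valid decomposition) is a welcome completeness check.
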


\noindent Let us introduce now the following definition of \emph{deadlock}, which will be used in the main result of this section. The importance of such a notion will be illustrated in Example \ref{italiens}. We recall that in our setting a mesh $M$ is any subset of $S$ with $|M_X|=1$. We also use the notation $M(x) = M$ if $M_X = \{x\}$, i.e. $M(x) = \{x\} \times Y(x)$.

\begin{definition}\label{deadlock}
Let $S \subset X \times Y$. We say that any triplet $(T,x_0,y_0)$, where $T \subset S$ and $(x_0,y_0) \in S$ is a \emph{deadlock} in $S$ if $|(M(x_0) \cap T)_Y| > 1$ and the following two properties hold: \begin{enumerate}
\item[(i)] $x_0 \in T_X$, $y_0 \in T_Y$, while $(x_0, y_0) \notin T$, 
\item[(ii)] every $T$-affine function which is null on $(M(x_0) \cap T)_Y$, is also null at $y_0$.
\end{enumerate}
\end{definition}

\begin{remark}
{\rm Observe that any 2-net $T$ satisfying the property (i) above satisfies (ii) for free. Indeed, $T$ being a 2-net any $T$-affine function, say $\psi$, is actually affine. Moreover $\psi$ is null on $(M(x_0)\cap T)_Y$, which contains at least two distincts points as, by definition of $2$-net, $T$ is also 1-erased. Hence $\psi$ is null everywhere in $T_Y$ and, in particular, at the point $y_0$.}  
\end{remark}

In view of the remark above, we will say that a 2-net $T$ is \emph{saturated} if the property (i) in Definition \ref{deadlock} never holds in $T$, i.e. for all pairs $(x_0,y_0)\in S$ with $x_0 \in T_X$ and $y_0 \in T_Y$ one has $(x_0,y_0)\in T$. 

\begin{example}[Example of deadlock] \label{Pallot}
{\rm Consider the following set $S$, which is taken from R. Pallotini's dissertation \cite[Section 4.5]{pallottini}.
Let $T = S \setminus \{(x_4, y_6)\}$. 
\begin{figure}[H]	
\centering
\includegraphics[scale=1]{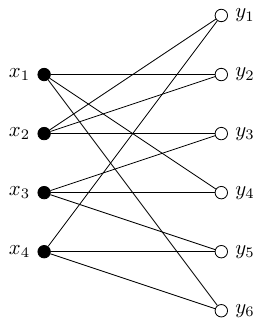}
\end{figure}
Then there is a critical value of $y_6$, denoted $y_6 ^*$, for which $T$ is a deadlock. Indeed consider $T$-affine functions which are null on $(M(x_4)\cap T)_Y=\{y_1,y_5\}$. Such functions are affine on $\{y_1,y_2,y_3\}$, $\{y_3,y_4,y_5\}$ and $\{y_2,y_4,y_6\}$, so they can be parameterized by their value, say $u$, at $y_3$. We plot in the following figure two examples of $T$-affine functions for $u=2$ (the red solid line) and $u=6$ (the blue solid line). 

\begin{figure}[H]	
\centering
\includegraphics[scale=1]{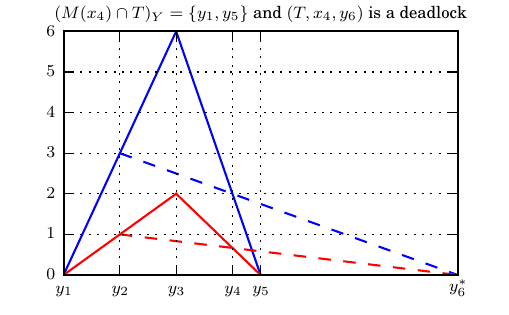}
\end{figure}

We can see that the dashed line crosses the $x$-axis at the same level whatever the value of $u$, which is the critical value $y^*_6$. In this case any $T$-affine function which is null on $(M(x_4)\cap T)_Y=\{y_1,y_5\}$ is also null at the point $y_6 (=y^*_6)$.

To prove that the crossing point does not depend on $u$, let $z$ be its value. By Thales' theorem we have that $\frac{z_2}{z_4}=\frac{y_2-z}{y_4-z}$ where $z_2$ and $z_4$ are the values of the $T$-affine function at the points $y_2$ and $y_3$, so that
$z_2 = u \frac{y_2-y_1}{y_3-y_1}$ and $z_4 = u \frac{y_5-y_4}{y_5-y_3}$, which yields that $\frac{z_2}{z_4}=\frac{y_2-y_1}{y_3-y_1}\cdot \frac{y_5-y_3}{y_5-y_4}$ does not depend on $u$, and eventually we get the value of $y^*_6$ by solving the equation
\[\frac{y_2-y_1}{y_3-y_1}\cdot \frac{y_5-y_3}{y_5-y_4}=\frac{y_2-y^*_6}{y_4-y^*_6}.\]
Note that, by the same reasoning, when $y_6 \neq y^*_6$, any $S$-affine function which is null on $M(x_4)_Y=\{y_1,y_5, y_6\}$ is necessarily null on $(y_2, y_4)$, hence everywhere. This proves that $S$ is a 2-net in the non critical case. And so only in this case since there are non-null $S$-affine functions in the critical case.}
\end{example}

We can finally state the main result of this section.

\begin{theorem} \label{thm2net}
Let $S$ be any subset of $X \times Y$. If the WEP holds for $S$, then $S$ does not contain any deadlock. Conversely, if $S$ does not contain any deadlock, there is an increasing sequence of sub-sets $(S_n)_{n \ge 1} \subset S$ such that the following properties hold:\begin{enumerate} 
\item[(i)] $|(S_n)_X|=n$ for all $n \ge 1$, and each $S_n$ is decomposed in finitely many maximal 2-nets;
\item[(ii)] the WEP holds on each $S_n$, for $n \ge 1$;
\item[(iii)] $\cup_{n \ge 1} S_n = S$.
\end{enumerate}
\end{theorem}

\begin{proof} 
We prove first that the WEP for $S$ implies that it does not contain any deadlock. Let us proceed by contradiction and consider a deadlock $(T,x_0,y_0)$ as in Definition \ref{deadlock} and let $f=\mathbf 1_{\{(x_0,y_0)\}}(x,y)$. As WEP($f$) holds, let $(\varphi, h, \psi)$ be any of its decomposition. In particular since $f \equiv 0$ on $T$, $\psi$ is a $T$-affine function such that
$$\psi(y) = \varphi(x)+h(x)(y-x).$$
Consider now the affine function $y \mapsto \varphi(x_0)+h(x_0)(y-x_0)$. It can be written as $\varphi(x_0)+h(x_0)(x-x_0)+h(x_0)(y-x)$ so that the triplet $(\varphi', h', \psi')$ given by
\[ \varphi' = \varphi - (\varphi(x_0)+h(x_0)(x-x_0)), \quad h'= h - h(x_0), \quad \psi'=\psi - (\varphi(x_0)+h(x_0)(y-x_0)),\] is also a decomposition of $f$. Moreover we have $\varphi'(x_0)=h'(x_0)=0$, so that the $T$-affine function $\psi'$ is null on the set $M(x_0)_Y$ in $T_Y$, which entails by the deadlock property (ii) in Definition \ref{deadlock} that $\psi'(y_0)=0$. Therefore $\varphi'(x_0)+h'(x_0)(y_0-x_0)-\psi'(y_0)=0$ whereas $1=f(x_0, y_0) = \varphi'(x_0)+h'(x_0)(y_0-x_0)-\psi'(y_0)$, whence a contradiction. This completes the proof of the first part of this theorem.\medskip

To prove the second part, we need to show that, under the no-deadlock assumption, there exists a sequence of sets $S_n \uparrow S$ fulfilling the properties (i)-(ii)-(iii) in the statement. By Proposition \ref{decompWEP}, we can assume without loss of generality that $S$ is connected (cf. Definition \ref{connected}). Let $f:X \times Y \to \mathbb R$ be an arbitrary function. We prove that $f$ satisfies locally the WEP over a suitable sequence of subsets $S_n \subset S$ with the announced properties, whose recursive construction goes as follows.

Let $S_1 = M(x_1)$. The WEP holds for $f$ on $S_1$ by setting $(\varphi(x_1), h(x_1))=(0,0)$ and $\psi(y)= f(x_1,y)$ on $M(x_1)_Y$. Moreover $|(S_1)_X|=1$ and $S_1$ is a maximal 2-net.

Now let us assume that $f$ satisfies the WEP on $S_n$. Hence either $S_n=S$, and we are done, or there is some other point $x_{n+1}$ such that $C:=M(x_{n+1})_Y \cap (S_n)_Y \neq \emptyset$. Indeed, if $C$ was empty, then $S$ would not be connected, which contradicts our initial assumption. Let $(A_i)_{1 \leq i \leq k}$ be the decomposition of $S_n$ in maximal 2-nets.
By Corollary \ref{extended}, $C$ has at most two points in the $Y$-section of each $A_i$.

Now we are going to extend WEP($f$) to $S_{n+1} = S_n \cup M(x_{n+1})$. The first step consists in extending it to $S_n \cup \{(x_{n+1},y):y \in C\}
$. It is useful to distinguish between two cases:\begin{enumerate}
\item[(a)] Assume $|C|=1$. Then for the only point $y \in C$ it suffices to set $\varphi(x_{n+1})=f(x_{n+1}, y) + \psi(y)$ and $h(x_{n+1})=0$.

\item[(b)] Consider now the situation $|C| \geq 2$. There are two possible sub-cases:
\begin{enumerate}
\item[(b.1)]First, assume there are two distinct points $y_1, y_2$ in $C$ which belong to the $Y$-section of the same maximal 2-net, which we can assume to be $A_1$ possibly after relabelling. We can extend WEP($f$) to the set $\{(x_{n+1},y_j): j=1,2\}$ by means of the formulas in \eqref{eq-CRR}, yielding the values $\varphi(x_{n+1})$ and $h(x_{n+1})$. Now, either there are no more right-hand points in $M(x_{n+1})$ and we are done, or there is another point $y_3$. Since $A_1$ is saturated, $y_3$ cannot belong to $(A_1)_Y$. Possibly after relabelling, we can assume that $y_3 \in (A_2)_Y$. 

Since $S$ has no deadlock, we can pick an $S_n$-affine function $\chi$ such that $\chi(z)=0$, for all $z \in (A_1)_Y$, and $\chi(y_3)+\psi_n(y_3)=\varphi(x_{n+1}) + h (x_{n+1}) (y_3-x_{n+1}) - f(x_{n+1}, y_3)$. We modify then WEP($f$) on $S_n \setminus A_1$ by adding to $\varphi$ and $h$ the decomposition of $\chi$ as a $S_n$-affine function. Notice that the WEP is preserved due to Proposition \ref{uptoaffine}. 

Then we decompose in maximal 2-nets the new set $T:= S_n \cup \{(x_{n+1},y_j): j=1,2,3\}$ to which WEP($f$) has been extended. Note that, due to Lemma \ref{add-one-branch}, such a decomposition has a cardinality less than or equal to $k$.

\item[(b.2)] Second, assume that there is at most one point in the intersection of $C$ and the $Y$-section of any maximal 2-net in the decomposition $(A_i)_{i=1}^k$ of $S_n$. In this case, we pick two distinct points $y_1, y_2 \in C$, and obtain the decomposition of the WEP $(\varphi(x_{n+1}), h(x_{n+1}))$ by the formulas \eqref{eq-CRR}. The key point is now to observe that the binomial mesh $M_{bin}=\{(x_{n+1}, y_1), (x_{n+1}, y_2) \}$ forms a 2-net which will be maximal in the decomposition of the set $T :=S_n \cup M_{bin}$. This is a consequence of the fact that $\aff (T) \subset \aff (S_n)$, as the affine functions on $M(x_{n+1})$ are affine on $A_1$ as well.  

Now, either there are no more right-hand points in $M(x_{n+1})$ and we are done, or there is another one, say $y_3$, and since $S$ has no deadlock, we
can choose a $T$-affine function $\chi$ such that $\chi((M_{bin})_Y)=0$ and $\chi(y_3)+\psi_n(y_3)=\varphi(x_{n+1}) + h(x_{n+1}) (y_3-x_{n+1}) - 
f(x_{n+1}, y_3)$. Then, we modify WEP($f$) on $T \setminus M_{bin}$ by adding to $\varphi$ and $h$ the decomposition of $\chi$ as a $T$-affine 
function. 

Finally, we observe that in this sub-case the decomposition in maximal 2-nets of the new set $T$ has a cardinality less than or equal to $k+1$. This is due to the same argument as above, except that now we have $k+1$ (instead of $k$ as before) because of the additional 2-net $M_{bin}$.
\end{enumerate}
Now, either there are no more right-end points in $M(x_{n+1})$ (which is necessarily the case if there is a single maximal 2-net in the decomposition, by Corollary \ref{extended}), in which case we are done. Otherwise there is another point $y \in M(x_{n+1})_Y$, and we re-iterate the steps above, extending in this way WEP($f$) to $S_n \cup \{(x_{n+1}, y), y \in C \}$.
\end{enumerate}
The second and final step consists in extending WEP($f$) to the rest of $M(x_{n+1})$ by setting $\psi_{n+1}(y) = \varphi(x_{n+1}) + h(x_{n+1}) (y-x_{n+1})-f(x_{n+1}, y)$ for the right-hand points $y$ of this set.

Finally, we observe that the new set $S_{n+1}$ satisfies $|(S_{n+1})_X|=n+1$ and it decomposes in finitely many maximal 2-nets. Therefore, the proof is complete.
\end{proof}

\begin{remark}
\emph{Notice that if $S \subset X \times Y$ is not necessarily 1-erased, we can always apply the main theorem above to the set $\mathcal E_{1,x}(S)$, which is 1-erased.}
\end{remark}

\begin{remark}
\emph{The main theorem above can be reformulated as follows when $S_X$ is finite: the WEP holds on $S$ if and only if $S$ does not contain any \emph{deadlocks}. 
Hence, it seems that in our formulation the deadlocks play the same role that the cycles have in characterizing extremality of measures with given marginals (without the martingale property as in \cite{denny}), i.e. both are forbidden patterns in the supports of their respective extremal measures. A critical difference is that the numerical values of $y$'s and not only the way the points are connected seem to matter in the martingale case (compare Example \ref{italiens}).}
\end{remark}

Here is the statement and proof of the ``extended intersection lemma'' that has been used in the second part of the proof above. 

\begin{corollary}[Extended intersection Lemma] \label{extended}
Assume that the WEP holds for $S$ and let $A$ be a 2-net in $S$. Then for any $z \in S_X \setminus A_X$, $|Y(z) \cap A_Y| \leq 2$.
\end{corollary}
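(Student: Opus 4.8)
The plan is to derive this as a direct consequence of the first part of Theorem \ref{thm2net}, namely the implication that the WEP for $S$ forces every $2$-net in $S$ to be saturated. The idea is to argue by contradiction: if some $z \in S_X \setminus A_X$ had $|Y(z) \cap A_Y| \geq 3$, I would combine the mesh $M(z) = \{z\} \times Y(z)$ (restricted to those intersection points) with $A$ to build a larger $2$-net containing $z$, and then play off saturatedness against the intersection Lemma \ref{2points} to reach a contradiction.

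First I would suppose, for contradiction, that there exist at least three distinct points $y_1, y_2, y_3 \in Y(z) \cap A_Y$. Consider the binomial mesh $B = \{(z,y_1),(z,y_2)\}$. Since $y_1,y_2 \in A_Y$ and $B_Y = \{y_1,y_2\}$, we have $|A_Y \cap B_Y| = 2$, and $B$ is trivially a $2$-net (being a single mesh). By Lemma \ref{extends2net}, $A \cup B$ is a $2$-net in $S$. Now $z \in (A \cup B)_X$ and $y_3 \in A_Y \subset (A \cup B)_Y$, while $(z,y_3) \in S$ by assumption; if $(z,y_3) \notin A \cup B$, this exhibits $A \cup B$ as a non-saturated $2$-net, contradicting the first part of Theorem \ref{thm2net}. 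So I must account for the possibility that $(z,y_3)$ is already in $A \cup B$; but since $z \notin A_X$, no path of the form $(z,\cdot)$ lies in $A$, and $(z,y_3) \notin B$ because $y_3 \neq y_1,y_2$. Hence $(z,y_3) \notin A \cup B$, and the contradiction stands.

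A cleaner route that sidesteps this bookkeeping is to invoke Proposition \ref{properties-max2net} together with the saturatedness conclusion. Extend $A$ to an $S$-maximal $2$-net $A'$ (which contains $A$, so $A_Y \subset A'_Y$). By the contradiction hypothesis, $z \notin A'_X$ would force $|Y(z) \cap A'_Y| \leq 1$ by part (i) of Proposition \ref{properties-max2net}, yet $y_1,y_2,y_3 \in Y(z) \cap A_Y \subseteq Y(z) \cap A'_Y$ gives at least three, a contradiction; and if instead $z \in A'_X$, then since $A'$ is saturated (by Theorem \ref{thm2net}) and $z \in A'_X$, $y_j \in A'_Y$ with $(z,y_j) \in S$, we get $(z,y_j) \in A'$ for all three $j$, making $A'$ contain a mesh at $z$ with three right-hand values lying in $A_Y = A'_Y \cap A_Y$, contradicting the intersection Lemma \ref{2points} applied to $z$ and any $x \in A_X$ realizing $y_1,y_2,y_3$ (which need not all be hit by a single $x$, so some care is needed here).

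The main obstacle I anticipate is precisely the last point: the three values $y_1,y_2,y_3$ all lie in $A_Y$, but they need not all be image points of a single $x \in A_X$, so one cannot immediately apply the pairwise intersection Lemma \ref{2points}. For this reason I expect the first argument, building $A \cup B$ directly via Lemma \ref{extends2net} and then invoking non-saturatedness, to be the more robust one, since it only uses that two of the three points lie in $A_Y$ to form the enlarged $2$-net and reserves the third point $y_3$ to witness non-saturatedness. This cleanly reduces the whole statement to the saturatedness half of Theorem \ref{thm2net} without any appeal to how the points of $A_Y$ are distributed among the fibers $Y(x)$, $x \in A_X$.
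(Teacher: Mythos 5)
Your first argument is exactly the paper's own proof: form the binomial mesh $B=\{(z,y_1),(z,y_2)\}$, use Lemma \ref{extends2net} to see that $A\cup B$ is a $2$-net, and then let the third intersection point $y_3$ witness non-saturatedness, contradicting the first part of Theorem \ref{thm2net}. The bookkeeping you add (checking $(z,y_3)\notin A\cup B$ because $z\notin A_X$ and $y_3\neq y_1,y_2$) is correct and only makes explicit what the paper leaves implicit, so you can discard the second, more convoluted route via maximal $2$-nets.
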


\begin{proof}
Assume the contrary, then the set $A \cup \{(z,y_i): i=1,2\}$ where the $y_i$'s belong to the intersection $Y(z) \cap A_Y$, is a 2-net, and therefore is saturated. Hence there can not
be a third point in the intersection and we have a contradiction.
\end{proof}

We conclude this section with some simple examples illustrating the content of the main Theorem \ref{thm2net} and the role played by the ``no deadlock'' assumption.

\begin{example}
{\rm Consider a very simple situation with $X=\{x_1,x_2\}$, $Y=\{y_1,y_2,y_3\}$ and where the paths in $S$ are given in the following graph:
\begin{figure}[H]	
\centering
\includegraphics[scale=1]{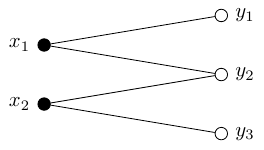}
\end{figure}

\noindent This clearly satisfies (\ref{2LP}), so that WEP holds. Let us verify that it does not contain any deadlock. Consider any triple $(T,x_0,y_0)$ with $|(M(x_0)\cap T)_Y| >1$, $(x_0, y_0) \in (T_X \times T_Y) \setminus T$. We need to show that we can find a $T$-affine function, which is null on $(M(x_0) \cap T)_X$ and not at the point $y_0$. In this example, the only triples with the properties above are $(T,x_i,y_j)$ for $T \in \{S,M(x_i)\}$ and $j=3$ (resp. $1$) if $i=1$ (resp. $2$). For each of them, we can check that property (ii) in Definition \ref{deadlock} is not satisfied. For instance, consider $(S,x_1,y_3)$ and take any $S$-affine function $\psi (y) = \alpha(x) + \beta(x)y$ for $y \in S_Y$ and $x \in X(y)$, which is null on $M(x_1) = \{y_1,y_2\}$. This implies $\alpha(x_1)+ \beta(x_1)y_1= \alpha(x_1)+ \beta(x_1)y_1 = 0$, so that $\alpha(x_1)=\beta(x_1)=0$. Now, we also have $\psi(y_2)= \alpha(x_2) + \beta(x_2)y_2 =0$, hence $\alpha(x_2) = -\beta(x_2)y_2$. Therefore $\psi(y_3) = \alpha(x_2) + \beta(x_2) y_3 = \beta(x_2) (y_3 -y_2)$, so that we can clearly have $\beta(x_2)\neq 0$. This means that $(S,x_1,y_3)$ is not a deadlock. We can similarly get to the same conclusion for the other triples.}
\end{example}

\begin{example}\label{patrasWEP}
{\rm The set in Example \ref{patras} is a maximal 2-net that fulfills the condition in Theorem \ref{thm2net}. Thus WEP holds. To see this, let us consider an arbitrary function $f$ and look for a triple $(\varphi, h, \psi)$ such that \eqref{WEPf} holds true. Moreover we will use the notation $\varphi_i = \varphi(x_i)$, $h_i =h(x_i)$ and $\psi_j = \psi(y_j)$ for all $i=1,\ldots,4$ and $j=1,\ldots,6$.

First, note that we can always assume that $(\varphi_1, h_1)=(0,0)$, so that the relation \eqref{WEPf} on the mesh $M(x_1)$ gives the values $(\psi_1,\psi_2,\psi_3)$. The same relation on the branches of $M(x_2)$ ending in $M(x_1)_Y$ gives in turn $(\varphi_2, h_2)$, and the last branch of $M(x_2)$ gives $\psi_4$. Turning to $M(x_3)$, the equation \eqref{WEPf} at $(x_3, y_4)$ gives $\varphi_3$ as a function of $h_3$, which we substitute in the expression for \eqref{WEPf} the two remaining paths in $M(x_3)$, hence obtaning $h_3(y_5-y_4) = \psi_5+ \text{known terms}$, and  $h_3(y_6-y_4) = \psi_6+ \text{known terms}$. The same analysis for  $M(x_4)$ yields $h_4(y_5-y_3) = \psi_5+ \text{known terms}$, and  $h_4(y_6-y_3) = \psi_6+ \text{known terms}$. Finally substituting $\psi_5$ and $\psi_6$ we get a linear system in $(h_3,h_4)$, whose determinant is given by $(y_5-y_4)(y_3-y_6)-(y_3-y_5)(y_6-y_4)$. Such a determinant is not null, because considering $y_6$ as a variable there is at most one value which makes it zero, which is $y_6=y_5$, and the points $(y_i)_{1 \leq i \leq 6}$ are assumed to be distinct. As consequence, since $f$ is arbitrary the WEP($f$) is satisfied.

We recall that none of the sufficient conditions previously discussed in Section \ref{suff} work in that example.}
\end{example}

\begin{example}\label{italiens}
{\rm 
Let us revisit Example \ref{Pallot}. In the critical case, i.e. $y_6 =y_6 ^*$, there is a deadlock, hence the WEP should not hold according to Theorem \ref{thm2net}. Let us investigate directly the WEP. 
To simplify the notation, we denote $f_{i,j} := f (x_i,y_j)$ for all $i,j$. We can assume without loss of generality that $\varphi_1= h_1 = 0 $ on the mesh $M(x_1)$, giving
\begin{align*}
 f_{1,6}+ \psi_6 = f_{1,4}+ \psi_4 = f_{1,1}+ \psi_2 =0.
\end{align*}
On the second mesh $M(x_2)$, we have
\begin{align*}
\varphi_2 + h_2 (y_3-x_2) &= f_{1,3} + \psi_3, \\
\varphi_2 + h_2 (y_2-x_2) &= f_{2,2} + \psi_2 = f_{2,2} - f_{1,1}, \\
\varphi_2 + h_2 (y_1-x_2) &= f_{2,1} + \psi_1.
\end{align*}
So by choosing $\psi_1$ as a parameter, the last two equations above give
\begin{align*}
\varphi_2 &= \frac{(y_1-x_2) (f_{2,2}-f_{1,1}) - (y_2-x_2) (f_{2,1}+\psi_1)}{y_1-y_2}, \\
h_2 &= \frac{(f_{2,2}-f_{1,1}) - (f_{2,1}+\psi_1)}{y_2-y_1},
\end{align*}
whence exploiting the remaining first one we get
\begin{equation} \label{psi3} \psi_3 = \frac{y_2-y_3}{y_2-y_1} \psi_1 + \frac{y_3-y_1}{y_2-y_1} (f_{2,2}-f_{1,1}) +\frac{y_2-y_3}{y_2-y_1} (f_{2,1}-f_{2,3}).\end{equation}
At this stage we have obtained $(\varphi_2, h_2, \psi_3)$ as functions of $\psi_1$. In exactly the same way for the mesh $M(x_3)$, while taking $\psi_3$ has a parameter, we have
\begin{align*}
\varphi_3 &= \frac{(y_3-x_3) (f_{3,4}-f_{1,4}) - (y_4-x_3) (f_{3,3}+\psi_3)}{y_3-y_4}, \\
h_3 &=  \frac{(f_{3,4}-f_{1,4}) - (f_{3,3}+\psi_3)}{y_4-y_3},
\end{align*}
so that
\begin{equation}\label{psi5} \psi_5 = \frac{y_4-y_5}{y_4-y_3} \psi_3 + \frac{y_5-y_3}{y_4-y_3} (f_{3,4}-f_{1,4}) +\frac{y_4-y_5}{y_4-y_3} (f_{3,3}-f_{3,5}).\end{equation}
We have now  $(\varphi_3, h_3, \psi_5)$ as functions of $\psi_3$, hence as functions of $\psi_1$ as well.
Working out the mesh $M(x_4)$, and using this time $\psi_1$ has a parameter, we obtain
\begin{align*}
\varphi_4 &= \frac{(y_1-x_4) (f_{4,6}-f_{1,6}) - (y_6-x_4) (f_{4,1}+\psi_1)}{y_1-y_6}, \\
h_4 &=  \frac{(f_{4,6}-f_{1,6}) - (f_{4,1}+\psi_1)}{y_6-y_1},
\end{align*}
yielding
\begin{equation}\label{psi5x4} \psi_5 = \frac{y_6-y_5}{y_6-y_1} \psi_1 + \frac{y_5-y_1}{y_6-y_1} (f_{4,6}-f_{1,6}) +\frac{y_6-y_5}{y_6-y_1} (f_{4,1}-f_{4,5}),\end{equation}
hence getting $(\varphi_4, h_4, \psi_5)$ in terms of $\psi_1$. 

Now, we have two different expressions for $\psi_5$ (since they involve $f$ evaluated along different paths), so the only way to reconciliate them is to adjust the value of $\psi_1$.
Substituting the expression \eqref{psi3} of $\psi_3$ in equation \eqref{psi5}, we obtain 
\begin{eqnarray*}
 \psi_5 &=& \frac{y_4-y_5}{y_4-y_3} \cdot \frac{y_2-y_3}{y_2-y_1} \psi_1 + \frac{y_3-y_1}{y_2-y_1} (f_{2,2}-f_{1,1}) +\frac{y_2-y_3}{y_2-y_1}( f_{2,1}-f_{2,3}) \\
 && + \frac{y_5-y_3}{y_4-y_3} (f_{3,4}-f_{1,4}) +\frac{y_4-y_5}{y_4-y_3} (f_{3,3}-f_{3,5}).
 \end{eqnarray*}
Therefore we need to have equality between the expression for $\psi_5$ just above and the one in \eqref{psi5x4}, which yields
\begin{align*} 
\frac{y_6-y_5}{y_6-y_1} \psi_1 + & \frac{y_5-y_1}{y_6-y_1} (f_{4,6}-f_{1,6}) +\frac{y_6-y_5}{y_6-y_1}( f_{4,1}-f_{4,5} ) \\ 
 = & \frac{y_4-y_5}{y_4-y_3} \cdot \frac{y_2-y_3}{y_2-y_1} \psi_1 + \frac{y_3-y_1}{y_2-y_1} (f_{2,2}-f_{1,1}) +\frac{y_2-y_3}{y_2-y_1} (f_{2,1}-f_{2,3}) \\
& + \frac{y_5-y_3}{y_4-y_3} (f_{3,4}-f_{1,4}) +\frac{y_4-y_5}{y_4-y_3} (f_{3,3}-f_{3,5}).
\end{align*}
The equation above has a solution if and only if
$$ \frac{y_6-y_5}{y_6-y_1} \neq \frac{y_4-y_5}{y_4-y_3}\cdot  \frac{y_2-y_3}{y_2-y_1} ,$$
otherwise WEP($f$) cannot be satisfied.

Working out the critical condition $ \frac{y_6-y_5}{y_6-y_1} = \frac{y_4-y_5}{y_4-y_3} \cdot \frac{y_2-y_3}{y_2-y_1} $ by viewing it as an equation in $y_6$, we get after some manipulations the critical case $y_6 = y_6 ^*$ of Example \ref{Pallot}.

Our computations also show that in the non-critical case there is no deadlock, which is not completely obvious as using the definition of deadlock would require a careful inspection of each subset $T$ of $S$.}
\end{example}

\begin{remark}
{\rm As the above example suggests, Theorem \ref{thm2net} does not really simplify the investigation of the WEP in practice, due to the fact that the no deadlock property should be verified for every subset $T$ of $S$. It rather works in the other direction: if some deadlock is found by direct considerations, the WEP cannot hold. Its theoretical value is to translate the WEP in a property of the locally affine functions of the subsets of $S$, which illustrates the importance of such functions in this context.}
\end{remark}

\subsection{Some complementary results on 2-nets and WEP}

In this section we gather some consequences and complements of Theorem \ref{thm2net} and the extended intersection lemma (Corollary \ref{extended}).
Indeed, using the latter we are now able to prove that when $|S_Y|$ is small enough, full erasability and the WEP are equivalent:

\begin{proposition}\label{5-erasability}
Assume \(|S_Y|\leq5\). Then the WEP holds for $S$ if and only if $S$ is fully erasable.
\end{proposition}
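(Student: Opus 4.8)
The plan is to prove the two implications separately, the forward one being essentially free: \emph{fully erasable} $\Rightarrow$ \emph{WEP} is exactly Proposition \ref{prop:fullyWEP} and uses no bound on $|S_Y|$. For the converse I would argue by contraposition and first reduce to the \emph{erased core} of $S$. The key observation is that the WEP is \emph{hereditary}: if the WEP holds for $S$ and $T\subseteq S$, then it holds for $T$, because any $g:T\to\mathbb R$ extends (say by $0$) to some $f:S\to\mathbb R$ whose WEP-decomposition on $S$ restricts to one of $g$ on $T$. Now set $T:=\bigcap_{n\ge 0}\E^n(S)$, the set of points never erased. Since $|S_Y|\le 5$, every neighbourhood $Y_{\E^n(S)}(x)$ lives inside the finite set $S_Y$ and the $X_{\E^n(S)}(y)$ are finite, so these decreasing families stabilize; from this one checks that $T$ is an \emph{erased} set, i.e. $|Y_T(x)|\ge 3$ for all $x\in T_X$ and $|X_T(y)|\ge 2$ for all $y\in T_Y$, and that $T\neq\emptyset$ exactly when $S$ is not fully erasable. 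By heredity it then suffices to prove the \emph{core claim}: a nonempty erased set $T$ with $|T_Y|\le 5$ cannot satisfy the WEP.

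To prove the core claim I assume, for contradiction, that the WEP holds for $T$. By the intersection Lemma \ref{2points}, two distinct points of $T_X$ have distinct image sets (otherwise they would share at least three points), so $x\mapsto Y(x)$ embeds $T_X$ into the finite family of subsets of $T_Y$ and hence $T_X$ is finite. I then decompose $T$ into its $T$-maximal $2$-nets $A_1,\dots,A_m$, which exist by Zorn's Lemma. Each mesh $M(x)$ is a $2$-net and so is contained in some $A_i$; thus the $A_i$ cover $T$, while Proposition \ref{properties-max2net}(ii) gives $(A_i)_X\cap(A_j)_X=\emptyset$ and $|(A_i)_Y\cap(A_j)_Y|\le 1$ for $i\neq j$. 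Hence the $A_i$ partition $T$ into finitely many pieces with pairwise almost-disjoint $Y$-projections of size $|(A_i)_Y|\ge 3$. A short combinatorial bound now caps the number of pieces: if $m\ge 2$, then for any $i\neq j$ one has $5\ge|T_Y|\ge|(A_i)_Y\cup(A_j)_Y|=|(A_i)_Y|+|(A_j)_Y|-|(A_i)_Y\cap(A_j)_Y|\ge 3+3-1=5$, forcing $|(A_i)_Y|=3$ and $(A_i)_Y\cup(A_j)_Y=T_Y$; a third $3$-set $(A_k)_Y\subseteq(A_i)_Y\cup(A_j)_Y$ meeting each of $(A_i)_Y,(A_j)_Y$ in at most one point would satisfy $3\le 1+1$, a contradiction. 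So $m\le 2$.

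It remains to eliminate $m=2$ and $m=1$. When $m=2$, the display above forces each $A_i$ to be complete bipartite over a $3$-point set $(A_i)_Y$: every $x\in(A_i)_X$ has $Y(x)\subseteq(A_i)_Y$ with $|Y(x)|\ge 3$, hence $Y(x)=(A_i)_Y$, and then Lemma \ref{2points} forbids two such $x$; thus each $A_i$ is a single mesh on three points, the two meshes sharing exactly one point, and the two ``private'' points of $A_1$ have $|X(y)|=1$, contradicting that $T$ is erased. When $m=1$, the whole set $T$ is a finite $2$-net and I finish by a dimension count: the linear map $(\varphi,h,\psi)\mapsto\big(\varphi(x)+h(x)(y-x)-\psi(y)\big)_{(x,y)\in T}$ from $\mathbb R^{T_X}\times\mathbb R^{T_X}\times\mathbb R^{T_Y}$ to $\mathbb R^{T}$ has kernel exactly the $2$-dimensional space of affine functions (on a $2$-net every $T$-affine $\psi$ is affine), so the WEP, i.e. surjectivity, forces $|T|=2|T_X|+|T_Y|-2$; combined with the erasedness estimates $|T|=\sum_x|Y(x)|\ge 3|T_X|$ and $|T|=\sum_y|X(y)|\ge 2|T_Y|$ this yields $|T_Y|\ge 6$, contradicting $|T_Y|\le 5$.

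The main obstacle is the core claim, and inside it the passage from the analytic hypothesis ``WEP holds'' to enough rigid combinatorial structure to force a contradiction. The decisive device is the decomposition into maximal $2$-nets together with Proposition \ref{properties-max2net}, which translates the WEP into the statement that the $Y$-projections of the pieces are pairwise almost-disjoint subsets of size $\ge 3$ inside a set of size $\le 5$; this is exactly where the threshold $5$ is consumed. Once $m\le 2$ is secured the two residual cases are quick, the only delicate point being the kernel computation for $m=1$, where one must use that on a $2$-net the WEP decomposition is unique up to a genuine affine function.
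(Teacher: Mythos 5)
Your proof is correct, but it takes a genuinely different route from the paper's. The paper reduces to a 2-erased set and then runs a hands-on case analysis over $|S_Y|\in\{3,4,5\}$, using the intersection Lemma \ref{2points} together with the extended intersection Lemma (Corollary \ref{extended}, hence Theorem \ref{thm2net}) to cap the number of points in $S_X$ and to exhibit an explicit erasure order in each case. You instead pass to the erased core $T=\bigcap_{n\ge 0}\E^n(S)$ and prove the clean intermediate statement that a nonempty erased set with $|T_Y|\le 5$ cannot carry the WEP, via the covering of $T$ by its maximal 2-nets (Zorn, Lemma \ref{extends2net}, Proposition \ref{properties-max2net}), an inclusion-exclusion count forcing at most two such pieces, and--the nicest part--a rank-nullity computation in the case of a single piece: on a finite 2-net the kernel of $(\varphi,h,\psi)\mapsto\varphi(x)+h(x)(y-x)-\psi(y)$ is exactly the two-dimensional space of affine functions, so surjectivity forces $|T|=2|T_X|+|T_Y|-2$, which against the erasedness bounds $|T|\ge 3|T_X|$ and $|T|\ge 2|T_Y|$ yields $|T_Y|\ge 6$. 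This dimension count pinpoints where the threshold $5$ is consumed and avoids Theorem \ref{thm2net} entirely; the paper's argument is more elementary and constructive (it produces the erasure order), but it is tailored to the three small cases and does not isolate a reusable structural statement the way your core claim does.

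One step needs patching: your justification that $T$ is erased. You assert that the sets $X_{\E^n(S)}(y)$ are finite, but $S_X$ may be countably infinite and a single $y$ may be linked to infinitely many $x$ (for instance infinitely many binomial meshes through the same pair of $y$-points, which is even compatible with the WEP), so the stabilization cannot be obtained by naive finiteness of both projections. The claim is nevertheless true whenever $S_Y$ is finite, by a different argument: each nontrivial application of $\E_{1,x}$ permanently removes at least one point from the $Y$-projection (a $y$ with $|X(y)|=1$ loses its only path), and if at some stage $n\ge 2$ the operation $\E_{1,x}$ acts trivially, then--since its input, being an output of $\E_{2,y}\circ\E_{1,y}$, already has all meshes of size at least $3$--the whole of $\E$ acts trivially and the process has stabilized. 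Hence $\E^n(S)$ stabilizes after at most $|S_Y|+2$ steps at an erased (possibly empty) set, which is $T$. With this repair your argument is complete.
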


\begin{proof}

We already know (cf. Proposition \ref{prop:fullyWEP}) that, for a given set $S$, full erasability implies the WEP. Hence, it suffices to prove the opposite implication. We can assume, without loss of generality, that $S$ is 2-erased, so that $|Y(x)|\geq 3$, for all $x \in S_X$.  Let $x_1 \in S_X$. We distinguish three different cases.\medskip\\
(i) Case $|S_Y|=3$: Then $|Y(x_1)|=3$ and by the intersection Lemma \ref{2points} there can not be another point in $S_X$, so that the paths of $M(x_1)$ are isolated
and $S$ is fully erasable.\medskip\\
(ii) Case $|S_Y|=4$: If $|Y(x_1)|=4$ then we can conclude as above. If $|Y(x_1)|=3$, let $x_2 \in S_X$. By the intersection lemma, $|Y(x_2)|=3$, $Y(x_2)$ intersects $Y(x_1)$ in exactly two points.
Therefore $M(x_1)\cup M(x_2)$  is a 2-net and by the extended intersection Lemma \ref{extended}  there can not be another point in $S_X$. Now since the mesh $M(x_2)$ has an isolated branch, i.e. a path $(x_2,y)$ with $X(y_2)=\{x_2\}$, it is erasable,
and we can then erase $M(x_1)$ whose paths are isolated.\medskip\\
(iii) Case  $|S_Y|=5$: If $|Y(x_1)|=5$, we can conclude as in case (i) and if $|Y(x_1)|=4$ the proof is the same as in the previous case (ii).
If $|Y(x_1)|=3$, let $x_2 \in S_X$. By the intersection lemma, either $|Y(x_2)|=4$ and $Y(x_2)$ intersects $Y(x_1)$ in two points, or $|Y(x_2)|=3$. In the former case, $M(x_1)\cup M(x_2)$ is a $2$-net and there can not be another point in $S_X$. Hence we conclude as above.
In the latter case, we distinguish two sub-cases. If $Y(x_2)$ intersects $Y(x_1)$ in two points, then $M(x_1)\cup M(x_2)$ is a 2-net. Therefore a third point $x_3 \in S_X$ has the property that $Y(x_3)$ intersects $(Y(x_1)\cup Y(x_2))_Y$
in exactly two points, and we have eventually a 2-net, say $A$, with $A_Y =Y$, so there can not be another point in $S_X$. It is readily checked that 
$S$ is fully erasable, starting by $Y(x_3)$, then $Y(x_2)$ and $Y(x_1)$.
If $Y(x_2)$ intersects $Y(x_1)$ in one point, then a third point $x_3 \in S_X$ intersects either $Y(x_1)$ in two points and $Y(x_2)$ in one point or the contrary.
Therefore we have eventually a 2-net with full $Y$ projection, so there can not be another point in $S_X$. It is readily checked that 
$S$ is fully erasable, starting from the meshes $M(x_1)$ or $M(x_2)$ whose $Y$-projection have only one intersection point with $Y(x_3)$, then continuing with
$M(x_3)$ and $M(x_1)$.
\end{proof}

The following proposition is a slight complement to Theorem \ref{thm2net}. It describes a situation where we can conclude that the WEP holds for an increasing limit of sets $S_n$:

\begin{proposition}\label{prop:Sn}
Let $(S_n)_{n \geq 1}$ be an increasing sequence such that the WEP holds for each $S_n$, and let $S = \cup_{{n \geq 1}} S_n$. If for each $n \geq 1$,  any $S_n$-affine function is the restriction to $S_n$ of an 
$S_{n+1}$-affine function, then the WEP holds for $S$.
\end{proposition}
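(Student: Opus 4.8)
The plan is to construct, inductively in $n$, a \emph{nested} family of WEP decompositions of a given function over the sets $S_n$, and then glue them into a single decomposition over $S$. Fix an arbitrary $f : S \to \mathbb{R}$. For each $n$ the WEP for $S_n$ yields functions $\varphi^{(n)}, h^{(n)} : (S_n)_X \to \mathbb{R}$ and $\psi^{(n)} : (S_n)_Y \to \mathbb{R}$ realizing the representation (\ref{WEPf}) on $S_n$; the catch is that the $\psi^{(n)}$ produced for different $n$ need not agree on the overlaps $(S_m)_Y \subset (S_n)_Y$. I would therefore replace them by a coherent family satisfying $\psi^{(n+1)}\big|_{(S_n)_Y} = \psi^{(n)}$, which then pastes to a well-defined function on $S_Y = \bigcup_n (S_n)_Y$.

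The mechanism is the observation that any two WEP decompositions of the same $f$ over a fixed $1$-erased set differ exactly by an $S$-affine function: if $f(x,y) = \varphi(x) + h(x)(y-x) - \psi(y) = \tilde\varphi(x) + \tilde h(x)(y-x) - \tilde\psi(y)$ for $y \in Y(x)$, then $(\psi-\tilde\psi)(y) = (\varphi-\tilde\varphi)(x) + (h-\tilde h)(x)(y-x)$ on each $Y(x)$, so $\psi-\tilde\psi \in \aff(S)$. Conversely, adding an $S$-affine $\hat\chi(y)=a(x)+b(x)(y-x)$ to $\psi$ and replacing $(\varphi,h)$ by $(\varphi+a, h+b)$ produces again a WEP decomposition. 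Moreover, since each $S_n$ is $1$-erased, every $Y_{S_n}(x)$ has at least two points, so by the formulas (\ref{eq-CRR}) the pair $(\varphi(x),h(x))$ is uniquely determined by $\psi$ and $f$; hence once the $\psi$'s are nested, the $\varphi$'s and $h$'s are automatically nested as well.

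With these tools the induction is as follows. Take $\psi^{(1)}$ to be any WEP choice on $S_1$. Given a nested WEP decomposition $(\varphi^{(n)}, h^{(n)}, \psi^{(n)})$ on $S_n$, pick any WEP decomposition $(\bar\varphi, \bar h, \bar\psi)$ on $S_{n+1}$. Restricted to $S_n$, both it and $(\varphi^{(n)},h^{(n)},\psi^{(n)})$ represent $f$, so $\chi := \psi^{(n)} - \bar\psi\big|_{(S_n)_Y}$ is $S_n$-affine. This is precisely where the hypothesis is used: $\chi$ is the restriction of some $S_{n+1}$-affine function $\hat\chi$ on $(S_{n+1})_Y$. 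Setting $\psi^{(n+1)} := \bar\psi + \hat\chi$ (with $\varphi^{(n+1)},h^{(n+1)}$ adjusted by the affine data of $\hat\chi$) gives a WEP decomposition on $S_{n+1}$ whose restriction to $(S_n)_Y$ equals $\bar\psi\big|_{(S_n)_Y} + \chi = \psi^{(n)}$, as required; the uniqueness from $1$-erasedness then propagates the nesting to $\varphi$ and $h$.

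Finally I would define $\psi(y) := \psi^{(n)}(y)$ for any $n$ with $y \in (S_n)_Y$, and likewise $\varphi, h$ on $S_X = \bigcup_n (S_n)_X$; these are well defined by the nesting, and for any $(x,y)\in S$, choosing $n$ with $(x,y)\in S_n$ recovers the representation (\ref{WEPf}) at $(x,y)$, so the WEP holds for $S$. I expect the main obstacle to be exactly the compatibility of the locally chosen decompositions, and it is resolved by the extension hypothesis together with the uniqueness afforded by $1$-erasedness; the remainder is routine bookkeeping with the unions $S_X = \bigcup_n (S_n)_X$ and $S_Y = \bigcup_n (S_n)_Y$.
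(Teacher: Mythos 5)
Your proof is correct and follows essentially the same route as the paper: fix $f$, inductively correct an arbitrary WEP decomposition on $S_{n+1}$ by an $S_{n+1}$-affine extension of the $S_n$-affine discrepancy $\psi^{(n)}-\bar\psi|_{(S_n)_Y}$ (which is exactly where the hypothesis enters), and glue the resulting nested triples over $S=\bigcup_n S_n$. Your explicit remark that $1$-erasedness forces the nesting of $\varphi$ and $h$ once the $\psi$'s are nested is a point the paper uses only implicitly, but the argument is the same.
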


\begin{proof}
Let $f$ be a real-valued function on $S$ and for any $n\geq 1$ let $f_n$ be its restriction to $S_n$. Then there is some triplet $(\varphi_1, h_1, \psi_1)$ such that
$f_1(x,y)=\varphi_1(x)+ h_1(x)(y-x)-\psi_1(y)$ on $S_1$. Assume by induction that there exists a sequence of triples
$(\varphi_p, h_p, \psi_p)_{1 \leq p \leq n}$, for $n \geq 1$, such that $\varphi_{p+1|(S_p)_X}=\varphi_p$ for all $1 \leq p \leq  n-1$ if $n \geq 2$, and the same for the other two functions. Then
since the WEP holds for $S_{n+1}$,  we have for some triple $(\varphi_{n+1}^0,h_{n+1}^0,\psi_{n+1}^0 )$
\[f_{n+1}(x,y)=\varphi_{n+1}^0 (x)+ h_{n+1}^0(x)(y-x)-\psi_{n+1}^0 (y) \quad \text{on } S_{n+1}.\] In particular
\[ (\varphi_{n+1}^0-\varphi_{n})(x)+ (h_{n+1}^0-h_n)(x)(y-x)-(\psi_{n+1}^0-\psi_n)(y)=0\] on $S_n$ and $q_{n}:=\psi_{n+1|(S_n)_Y}^0-\psi_n$ is an $S_{n}$-affine function.
Let $t_{n+1}$ be an $S_{n+1}$-affine function whose restriction to $S_n$ is $q_{n}$. We have on $ S_{n+1}$
\[  t_{n+1}(y) = r_{n+1}(x) + s_{n+1}(x)(y-x) \]
for suitable functions $r_{n+1},s_{n+1}$. Defining
\[ \varphi_{n+1} :=\varphi_{n+1}^0-r_{n+1}, \quad 
h_{n+1} :=h_{n+1}^0-s_{n+1},\quad \psi_{n+1} :=\psi_{n+1}^0-t_{n+1},\] 
yields $\varphi_{n+1|(S_n)_X}=\varphi_{n+1|(S_n)_X}^0-(\varphi_{n+1|(S_n)_X}^0-\varphi_{n})(x)=\varphi_{n}$, and in the same way we get $h_{n+1|(S_n)_X}=h_n$ and $\psi_{n+1|(S_n)_Y}=\psi_n $.

It follows that the functions $\varphi :=\lim_{n\to \infty} \varphi_n$, $h:=\lim_{n\to \infty} h_n$ and $\psi :=\lim_{n\to \infty} \psi_n$ are well defined on the whole set $S$. Indeed, for any $(x,y)\in S$ there exists $k \geq 1$ such that $(x,y) \in S_k$. Hence $\lim_{n \to \infty} \varphi_n (x) = \varphi_k (x)$ and similarly for the other two limits. Finally, we have
$f(x,y)=\varphi(x)+h(x)(y-x)-\psi(y)$ on $S$.
\end{proof}

Finally, we have the following:

\begin{corollary}
Let $(S_n)_{n \geq 1}$ be an increasing sequence of 2-nets such that the WEP holds for each $S_n$, and let $S = \cup_{{n \geq 1}} S_n$. Then the WEP holds for $S$.
\end{corollary}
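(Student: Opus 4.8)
The plan is to derive this corollary directly from Proposition \ref{prop:Sn}, whose conclusion is exactly what we want, namely that the WEP holds for $S = \cup_{n\ge 1} S_n$. Since the WEP is already assumed to hold for every $S_n$ and the sequence $(S_n)_{n\ge 1}$ is increasing, the only thing left to verify is the single structural hypothesis of that proposition: for each $n \ge 1$, every $S_n$-affine function must be the restriction to $S_n$ of some $S_{n+1}$-affine function. So I would begin by isolating this compatibility condition as the one and only point that needs checking.

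To verify it, I would exploit the assumption that every $S_n$ is a $2$-net. By Definition \ref{def:2-net}, being a $2$-net means precisely that every $S_n$-affine function is genuinely affine. Thus, given any $\psi \in \aff(S_n)$, there exist constants $\alpha, \beta \in \mathbb{R}$ with $\psi(y) = \alpha + \beta y$ for all $y \in (S_n)_Y$. I would then simply define $\tilde\psi(y) := \alpha + \beta y$ on the larger set $(S_{n+1})_Y$. Since $\tilde\psi$ is affine, it is in particular $S_{n+1}$-affine (recall that affine functions are $S$-affine for any $S$), and by construction its restriction to $(S_n)_Y$ coincides with $\psi$. This is exactly the extension required, so the hypothesis of Proposition \ref{prop:Sn} is met and the WEP for $S$ follows.

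I do not expect any genuine obstacle: the substantive content of the corollary is carried entirely by Proposition \ref{prop:Sn}, and the $2$-net assumption trivializes the compatibility condition on affine functions, because affineness is a global property that extends freely from a smaller $Y$-projection to a larger one. The only point demanding a moment's care is making sure that the coefficients $(\alpha,\beta)$ extracted from the $2$-net property of $S_n$ are unambiguous and that $\tilde\psi$ is genuinely $S_{n+1}$-affine rather than merely affine on $(S_n)_Y$; both are immediate once one recalls that each $S_n$ is $1$-erased, so that the affine representation is unique (cf. Remark \ref{CRR}), and that a globally affine function qualifies as $S$-affine for every $S$.
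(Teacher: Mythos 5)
Your proof is correct and follows exactly the paper's route: invoke Proposition \ref{prop:Sn} and verify its hypothesis by using the $2$-net property of each $S_n$ to turn any $S_n$-affine function into a genuinely affine one, which extends to $(S_{n+1})_Y$ with the same slope and intercept. No gaps; the remark about uniqueness of the coefficients is harmless but not even needed, since any affine representative would serve for the extension.
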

\begin{proof}
The result follows readily from Proposition \ref{prop:Sn} above and from the fact that any affine function on $(S_n)_Y,  n \geq 1$, is the restriction to $(S_n)_Y$ of the affine function defined on $(S_{n+1})_Y$ with the same slope and intercept.
\end{proof}

\section{Cycles and extremality}\label{cycles}

In this section we examine the relation between extremality of a measure $Q$ in $\mathcal M (\mu,\nu)$ and the existence of cycles in its support. 
This is motivated by a following well-known geometrical characterization of extremal probabilities (without the martingale property) with given marginals  that we have already mentioned in the introduction: let $\mu,\nu$ be given marginals with countable supports, then a measure $Q$ with marginals $\mu,\nu$ is extremal if and only if its support does not contain any cycle. For clarity and for later use, we recall the relevant notion of (classical) cycle in the following

\begin{definition}\label{cycle} Let $S \subset X \times Y$. A (classical) cycle $\mathcal C$ in $S$ is any finite sequence of paths $\mathcal C= (x_i,y_i)_{i=1}^{2n} \subset S$ with $n \ge 1$, such that: \begin{enumerate}
\item either $y_{2i}=y_{2i-1}$, $x_{2i+1}=x_{2i}$, $x_{2i} \neq x_{2i-1}$, and $y_{2i+1} \neq y_{2i}$, or the same condition with $x$ and $y$ interchanged;
\item $x_1=x_{2n}$ (in which case $y_1=y_2$) or $y_1=y_{2n}$ (in which case $x_1=x_2$) and $x_i \neq x_j$ and $y_i \neq y_j$ for $1\le i \le j-3 < 2n-3$. 
\end{enumerate}\end{definition}

\begin{notation}\label{notation}
\emph{Since a cycle is a sequence of paths, there is a natural order along the cycle. For a given path $(x_i, y_i)$ in the cycle, either $x_{i+1} \neq x_i$ and we will say
that $(x_i, y_i)$ is an {\it outgoing} path from $x_i$, or $x_{i+1} = x_i$ and we will say that it is an {\it incoming} path. We use the convention $x_{i+1}=x_1$ if $i=2n$.
By relabelling if necessary, we can assume without loss of generality that  $(x_1, y_1)$ is an outgoing path from $x_1$. Then we can enumerate the cycle starting from $x_1$ as follows: $(x_1, y_1), (x_2, y_2),\ldots ,(x_{2n}, y_{2n})$,
where $y_1=y_2$ and $x_{2n}=x_1$. We will ease the notation by denoting a cycle directly by the sequence of its points, where the last one coincide with the first one by convention:  
$(x_1,y_2,x_3, y_4,\ldots ,x_{2n})$. Note also that a cycle will be identified with its support and orientation: for instance the cycles $(x_1, y_1, x_2, y_3, x_1)$  and $(x_2, y_3, x_1, y_1, x_2)$ are the same
cycle, and $(x_1, y_3, x_2, y_1, x_1)$ has the same support, but opposite orientation.}
\end{notation}

Proofs of the equivalence between extremality of $Q$ and absence of cycles in the support of $Q$ can be found in \cite{letac,muk}. The main idea is that if such a cycle exists the measure $Q$ can be perturbed along that cycle while preserving the marginals as follows: let $\alpha >0$ be a given parameter, set
\begin{equation}\label{perturb} Q_1 (x_i,y_i) = Q(x_i,y_i) + (-1)^i \alpha , \quad  Q_2 (x_i,y_i) = Q(x_i,y_i) - (-1)^i \alpha , \quad 1 \le i \le 2n,\end{equation}
and $Q_1(x,y)=Q_2(x,y)=Q(x,y)$ otherwise. Hence, since $\alpha$ can be chosen sufficiently small so that $Q_k$, for $k=1,2$, are probability measures, we have $Q_k \in \mathcal P (\mu,\nu)$, $k=1,2$, and $Q=(Q_1+Q_2)/2$. Whence $Q$ is not extremal in $\mathcal P (\mu,\nu)$. \medskip

In this section we investigate to which extent this idea can be exploited in our martingale context. We will introduce first a very natural notion of cycles in our context, {\it cycles of 2-meshes} and we will end this section by a generalization of this notion in terms of classical cycles. \medskip

\subsection{Cycles of 2-meshes}

Let us start by revisiting the proof of Lemma \ref{inter-extr} (Intersection Lemma under extremality), where we have constructed a perturbation of the initial probability $Q \in \mathcal M(\mu, \nu)$
in the subset $A=\{(x_i, y_j)\}_{i \in \{1,2\}, j \in \{1,2,3\}}$. It turns out that this perturbation can be seen in a different perspective. In fact, it can be viewed as a perturbation along a {\it cycle of 2-meshes}.
A given 2-mesh $M := \{(x_1,y_i): i=1,2\}$ can be clearly seen as an element of the product space $X \times Y^2$. With a slight abuse of notation we will sometimes write $M = (x_1; y_1,y_2)$. 
We define then a cycle of 2-meshes in a natural way:

\begin{definition}\label{cycle2mesh}
A cycle of 2-meshes is a cycle in $X \times Y^2$.
\end{definition}\noindent
Hence, the decomposition of the set $A$ in a cycle of 2-meshes is
\begin{equation}\label{A-cycle} (x_1; y_1, y_2),\, (x_2; y_1, y_2), \, (x_2; y_2, y_3), \, (x_1; y_2, y_3),\end{equation} or, using the notation \ref{notation} applied the to product space $X \times Y^2$,
$(x_1, (y_1,y_2), x_2, (y_2, y_3), x_1)$. Now the key observation is the following: associate to each 2-mesh $(x;y, y^\prime )$ a perturbation of total mass $\alpha$ dispatched as $p$ on the path $(x,y)$ and $q$ on the path $(y,z)$, so that $\alpha=p+q$, i.e.
\[ Q(x,y) + p, \quad Q(x,y^\prime)+ q, \quad p+q = \alpha,\]
for $p,q \in [0,1]$. In order for such a perturbation to preserve the martingale property, we impose 
\[ py+qy^\prime =0,\] giving 
\[ q = \frac{\alpha y}{y-y^\prime}, \quad p = \frac{-\alpha y^\prime }{y-y^\prime},\] 
so that given $\alpha$ there is a unique possible choice for $p,q$, which do not depend on the origin point of the 2-mesh.
Along the cycle (\ref{A-cycle}) of 2-meshes, in order to preserve the mass $\nu(y)$ at each point $y \in Y$, we choose the following
sequence of perturbations: $\alpha, -\alpha, \alpha, -\alpha$. This also grants that the total mass of the perturbation at each point in $X$ is zero. Choosing $\alpha$ small enough, such a procedure leads to a new probability measure, say $Q^\alpha \in \mathcal M(\mu,\nu)$. Finally, applying a perturbation with opposite signs, i.e. $-\alpha, \alpha, -\alpha, \alpha$, we get another probability, say $\tilde Q^\alpha \in \mathcal M(\mu,\nu)$, such that $Q=(Q^\alpha + \tilde Q^\alpha)/2$. This contradicts the extremality of $Q$ in $\mathcal M(\mu,\nu)$.

The construction is exactly the same for a cycle of 2-meshes of any finite length. Therefore, we can sum up what we have just obtained in the following, where we identify a point $(x,(y_1, y_2))$ in $X \times Y^2$
with the subset $\{(x,y_1), (x,y_2)\}$ of $X \times Y$.

\begin{proposition} [Perturbation along a cycle of 2-meshes]
Let $Q \in \mathcal M(\mu, \nu)$ be extremal. Then the support of $Q$ does not contains any cycle of 2-meshes.
\end{proposition}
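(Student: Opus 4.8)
The plan is to mirror the explicit construction that the paper has just carried out for the length-four cycle (\ref{A-cycle}), but for a cycle of 2-meshes of arbitrary finite length, and to exhibit two distinct perturbations of $Q$ whose average is $Q$. Concretely, suppose $Q$ is extremal and, for contradiction, that its support contains a cycle of 2-meshes. By Definition \ref{cycle2mesh} and Notation \ref{notation} applied to the product space $X \times Y^2$, such a cycle can be written as an alternating sequence
\[ (x_1, (y_0,y_1), x_2, (y_1,y_2), x_3, (y_2,y_3), \ldots, x_{2n}, (y_{2n-1}, y_0), x_1),\]
where consecutive 2-meshes share exactly one right-hand point $y_j$, and the whole thing closes up. The key structural fact, already isolated in the paper, is that once a total perturbation mass $\alpha$ is assigned to a 2-mesh $(x; y, y')$, the martingale constraint $py + qy' = 0$ together with $p+q=\alpha$ forces the unique split $q = \alpha y/(y-y')$, $p = -\alpha y'/(y-y')$, and crucially this split depends only on the pair $(y,y')$, not on the vertex $x$.

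First I would define the candidate perturbation $Q^\alpha$ by assigning alternating total masses $\alpha, -\alpha, \alpha, -\alpha, \ldots$ to the successive 2-meshes around the cycle, and for each 2-mesh distributing its assigned mass over its two paths by the unique martingale-preserving split above. The heart of the argument is then a pair of bookkeeping checks. First, the martingale property of $Q^\alpha$ at each $x_i$ holds automatically because the contribution of each individual 2-mesh to $\sum_y y\,\delta Q(x_i,y)$ is zero by construction of the split. Second, and this is where the cyclic structure is essential, the mass at each shared right-hand point $y_j$ must be preserved: $y_j$ receives a contribution from the two consecutive 2-meshes meeting at it, and because those two meshes carry total masses of opposite sign ($+\alpha$ and $-\alpha$) while the split is a function of the mesh $(y,y')$ alone, the two contributions at $y_j$ cancel. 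I would verify this cancellation explicitly at a generic shared vertex and note that the alternation being consistent around a closed cycle is exactly the parity condition built into the definition of a cycle (even length $2n$). The analogous cancellation at the left-hand points $x_i$ gives $\mu$-conservation.

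Having checked that $Q^\alpha$ preserves both marginals and the martingale property, I would choose $\alpha>0$ small enough that every perturbed weight stays strictly positive — possible because the cycle is finite and every path on it has strictly positive $Q$-mass, so there is a uniform lower bound on the weights to protect — so that $Q^\alpha \in \mathcal M(\mu,\nu)$. Defining $\tilde Q^\alpha$ by the opposite sign pattern $-\alpha, \alpha, -\alpha, \ldots$ gives a second element of $\mathcal M(\mu,\nu)$, and by linearity of the splits one has $Q = \tfrac{1}{2}(Q^\alpha + \tilde Q^\alpha)$ with $Q^\alpha \neq \tilde Q^\alpha$, contradicting extremality.

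The main obstacle I anticipate is purely notational rather than conceptual: making the indexing of the shared $y_j$ vertices and the alternating signs precise enough that the cancellation at each shared vertex is unambiguous, especially handling the closing-up of the cycle (the wrap-around identifications $y_0$ and $x_1$) and confirming that the required alternation $\alpha,-\alpha,\ldots$ is globally consistent — which is guaranteed precisely because a cycle has even length $2n$. Once the split's independence from the origin vertex $x$ is invoked (the one genuinely martingale-specific input, already established in the discussion preceding the statement), the marginal-preservation computation is the same finite telescoping cancellation used in the classical non-martingale cycle argument recalled in (\ref{perturb}), so I would keep that part brief and lean on the explicit four-mesh computation already done for the set $A$.
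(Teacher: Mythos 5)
Your overall plan --- alternate total masses $\pm\alpha$ along the cycle, use the unique martingale-preserving split inside each 2-mesh (noting it depends only on the pair $(y,y')$), pick $\alpha$ small, and average two opposite perturbations to contradict extremality --- is exactly the paper's argument. The gap is that the combinatorial structure you assign to a ``cycle of 2-meshes'' is not the one in Definition \ref{cycle2mesh}. That definition says a cycle of 2-meshes is a \emph{classical} cycle in the product space $X \times Y^2$, so by Definition \ref{cycle} consecutive meshes must alternately share a \emph{whole} coordinate: either the same point of $Y^2$ --- i.e.\ the identical pair $(y,y')$, as with $(x_1;y_1,y_2)$ and $(x_2;y_1,y_2)$ in (\ref{A-cycle}) --- or the same point $x$ of $X$. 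Your chain $(x_1;y_0,y_1),\,(x_2;y_1,y_2),\ldots,(x_{2n};y_{2n-1},y_0)$, in which consecutive meshes share exactly one right-hand point and each $x_i$ carries a single mesh, is not such a cycle.

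For that chain both of your bookkeeping checks fail. First, $\mu$-conservation: each $x_i$ belongs to only one mesh, so the perturbation changes $\mu(\{x_i\})$ by the full mass $\pm\alpha \neq 0$; there is nothing to cancel against. Second, the cancellation at a shared $y_j$: the fact that the split depends only on the pair yields cancellation only between two meshes carrying the \emph{same} pair with opposite masses. For your meshes $(x_j;y_{j-1},y_j)$ with mass $+\alpha$ and $(x_{j+1};y_j,y_{j+1})$ with mass $-\alpha$, the net change of $\nu(\{y_j\})$ is
\[ \alpha\left(\frac{y_{j-1}}{y_{j-1}-y_j} + \frac{y_{j+1}}{y_j - y_{j+1}}\right), \]
which is nonzero in general (take $y_{j-1}=3$, $y_j=2$, $y_{j+1}=1$: one gets $4\alpha$). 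In the correct structure every mesh has a unique partner with the identical pair and opposite mass, which cancels its contribution at \emph{both} right-hand points, while the two meshes sharing a common $x$ have opposite total masses (giving $\mu$-conservation) and need not share any $y$-point at all --- requiring them to overlap in exactly one point is an extra restriction absent from the definition. Note that if one instead reads your displayed sequence through Notation \ref{notation} (compressed cycle notation), it expands to a genuine cycle of $4n$ meshes, two per $x_i$ and two per pair; but then four meshes, not two, meet at each shared $y_j$, and the cancellation is pairwise among same-pair meshes --- which is not the argument you give. Redone on the correct alternating structure, your proof becomes precisely the paper's.
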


A similar implication holds with the WEP replacing extremality:

\begin{proposition} 
Let $S$ be a subset of $X\times Y$ and assume that the WEP holds for $S$. Then $S$ does not contain any cycle of 2-meshes. 
\end{proposition}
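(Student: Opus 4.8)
The plan is to argue by contraposition, translating a hypothetical cycle of 2-meshes into a concrete function $f:S\to\mathbb R$ that violates the WEP representation \eqref{WEPf}. Suppose for contradiction that $S$ contains a cycle of 2-meshes, written in the notation of the excerpt as
\[ (x_1,(y_1^-,y_1^+)),\,(x_2,(y_2^-,y_2^+)),\,\ldots,\,(x_{2n},(y_{2n}^-,y_{2n}^+)), \]
where consecutive 2-meshes share either their $x$-coordinate (incoming/outgoing at the same point of $X$) or one of their $y$-values (shared point of $Y$), and the cycle closes up. Since the WEP holds, every $f:S\to\mathbb R$ admits a decomposition $f(x,y)=\varphi(x)+h(x)(y-x)-\psi(y)$ on $S$. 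The idea is that along each 2-mesh the value of $f$ is controlled, modulo the affine part, by $\psi$ restricted to the two $y$-points, and following the shared coordinates around the cycle forces a nontrivial consistency relation on $\psi$ that a suitably chosen $f$ can violate.

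The key steps, in order, are as follows. First I would fix the representation $(\varphi,h,\psi)$ guaranteed by the WEP and observe that on each 2-mesh $(x_j,(y_j^-,y_j^+))$ the two equations $f(x_j,y_j^\pm)=\varphi(x_j)+h(x_j)(y_j^\pm-x_j)-\psi(y_j^\pm)$ determine $h(x_j)$ in terms of the increment of $f+\psi$ across the two points, exactly as in the binomial-mesh example, i.e. $h(x_j)=\frac{(f+\psi)(x_j,y_j^+)-(f+\psi)(x_j,y_j^-)}{y_j^+-y_j^-}$. Second, I would exploit the two ways consecutive meshes are glued: when two meshes share the same $x=x_j$, their $h(x_j)$ values must coincide, giving a relation linking the four $f$- and $\psi$-increments; when two meshes share a point $y\in Y$, the single value $\psi(y)$ is common to both. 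Third, I would form the telescoping/alternating sum of these local relations around the whole cycle; because the cycle closes, all the $\psi$-contributions cancel in pairs (each shared $y$ appears with opposite signs in its two incident meshes), leaving a linear constraint that involves only the prescribed values of $f$ on $S$. Finally, I would \emph{choose} $f$ to take value $1$ at a single cleverly selected path of the cycle and $0$ elsewhere on $S$ (an indicator function, as used in the proof of Theorem \ref{thm2net}), making the resulting $f$-only constraint read $0=1$ or some other manifest contradiction, which shows no valid $\psi$ can exist and hence the WEP fails.

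The main obstacle I expect is the bookkeeping of signs and the case analysis for how consecutive 2-meshes are linked: a cycle of 2-meshes alternates between "sharing an $x$" and "sharing a $y$" steps, and the martingale-flavoured weights $p,q$ computed in the text (with $p=\frac{-\alpha y'}{y-y'}$, $q=\frac{\alpha y}{y-y'}$) suggest the cancellation is weighted rather than a plain alternating sum, so I must track the correct coefficients to ensure everything telescopes. An alternative, and probably cleaner, route that sidesteps much of this combinatorics is to use the perturbation result directly: the immediately preceding proposition shows that extremality rules out cycles of 2-meshes, and one can instead invoke the Douglas--Lindenstrauss--Naimark machinery — if the WEP held in the presence of such a cycle, the indicator $f=\mathbf 1_{(x_0,y_0)}$ of a path on the cycle would be WEP-representable, yet the explicit two-sided perturbation $Q^\alpha,\tilde Q^\alpha$ built along the cycle (which leaves marginals and the martingale property intact) would give two distinct measures agreeing with $Q$ on all representable functions, contradicting the denseness characterization of Theorem \ref{douglas2}. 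I would present the direct indicator-function argument as the primary proof and mention the perturbation route as the conceptual reason it works.
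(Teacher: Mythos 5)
Your proposal is correct and follows essentially the same route as the paper's own proof: write the increment-ratio identity $\frac{f(x_i,y_{i,2})-f(x_i,y_{i,1})}{y_{i,2}-y_{i,1}}=h(x_i)+\frac{\psi(y_{i,2})-\psi(y_{i,1})}{y_{i,2}-y_{i,1}}$ on each 2-mesh, take the alternating sum around the cycle so that the $h$-terms cancel at shared $x$-points and the $\psi$-terms cancel at shared $y$-points, and conclude that the resulting linear constraint on $f$ alone is violated by a suitable choice of $f$ (the paper simply invokes arbitrariness of $f$, while you instantiate it with an indicator — a harmless refinement). Your worry about weighted cancellation is unfounded: the martingale weights $p,q$ belong only to the perturbation construction, and the plain alternating sum telescopes exactly as you describe.
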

\begin{proof}
Assume that the WEP holds for $S$ and that $S$ contains a cycle of 2-meshes $M^1,\ldots, M^n$ for some $n \ge 2$. Pick any function $f: S \to \mathbb R$ and let $(\varphi, h, \psi)$ be a decomposition of $f$ as in (\ref{WEPf}).
Within any 2-mesh $M^i = (x_i ;y_{i,1} , y_{i,2})$, one has 
\[ \frac{f(x_i,y_{i,2})-f(x_i,y_{i,1})}{y_{i,2}-y_{i,1}}=h(x_i)+\frac{\psi(y_{i,2})-\psi(y_{i,1})}{y_{i,2}-y_{i,1}}, \quad i=1,\ldots, n.\]
Summing up along the cycle of such 2-meshes, notice that the $h$ term (resp. the $\psi$ term) will cancel whenever consecutive 2-meshes have their $x$-points (resp. one or more of their $y$-points) in common. We get therefore the equality
$$0=\sum_i (-1)^i \frac{f(x_i,y_{i,2})-f(x_i,y_{i,2})}{y_{i,2}-y_{i,1}}.$$
Since the function $f: S \to \mathbb R$ is arbitrary, we get a contradiction.
\end{proof}

Therefore, absence of cycles of 2-meshes is necessary for both extremality and WEP. At this point it is very natural to ask if the converse statement is also true. Unfortunately, albeit being a natural notion to consider in a martingale setting, it turns out that it is not sufficient for neither extremality nor WEP as the following example shows. 

\begin{example} \label{cycles-counter} {\rm Let $X=\{x_1,x_2,x_3\}$ and $Y=\{y_i\}_{i=1,\ldots,5}$ be in decreasing order, i.e. $x_1 > x_2 > x_3 >0$ and $y_1 > \cdots > y_5>0$. Moreover, we assume $y_2 = x_1$ and $y_4 < x_3 < x_2 < y_3$. Consider the following pattern:
\begin{figure}[H] 	
\centering
\includegraphics[scale=1]{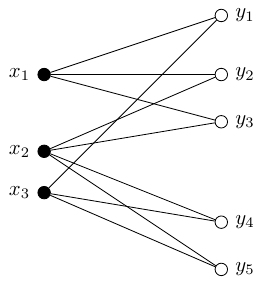}
\end{figure}
\noindent This cannot be the support of an extremal martingale measure with given marginals, since 
the WEP does not hold. Moreover, it can also be checked by direct inspection that it does not contain any cycles of 2-meshes.} \end{example}

Thus, this seems to imply that the  notion of cycle of 2-meshes is too strong. 
Finding a more general pattern, admitting a perturbation which preserves the marginals and the martingale property, is the topic of the next section.

\subsection{Perturbation along a pool of cycles}

First, we observe that if a probability measure is not extremal in $\mathcal M(\mu, \nu)$, it is also not extremal in $\mathcal P (\mu, \nu)$, so there will be a classical
cycle in its support (cf. \cite{letac,muk}). We know that a one-parameter perturbation is naturally attached to a cycle as in (\ref{perturb}), in such
a way that the marginals are preserved. There is no hope for this single perturbation to preserve {\it also} the martingale property: Indeed the martingale
condition at any point $x$ in the $X$-section of the cycle reads $\pm \alpha (y_1-y_2)=0$ where $y_1$ and $y_2$ are the two distinct points in $Y$
such that the paths $(x,y_1)$ and $(x,y_2)$ belong to the cycle.
The idea we are going to exploit is that combining many cycles in a suitable way should add sufficiently many degrees of freedom to the perturbation in order to fulfil the martingale property. 

\subsubsection{Revisiting cycles of 2-meshes}\label{sec:twoMeshesRevisited}

As a warm-up, we revisit the notion of cycle of 2-meshes in terms of classical cycles. Consider a cycle of 2-meshes $M_1,\ldots,M_{2n}$, for some $n \ge 1$, as in Definition \ref{cycle2mesh}. 
Each consecutive pair of 2-meshes $M_i,M_{i+1}$ with the same $Y$-section can be viewed as a classical cycle $\mathcal C_i$ (with length 4), so that
a cycle of 2-meshes with length $2n$ can be seen as a set of $n$ classical cycles, each with length 4. If we attach a sufficiently small perturbation parameter $\alpha_i >0$ to each of these cycles 
as in (\ref{perturb}) the marginals will be preserved. 

We use the following notation for the points of any cycle $\mathcal C_i$: its left-hand points are given by $(\mathcal C_i)_X =\{x_i, x_{i+1}\}$, 
while the right-hand points are $(\mathcal C_i)_Y = \{y_{i,1},y_{i,2}\}$. Notice that the cycle property implies in particular that $y_{i,2} = y_{i+1,1}$ for all $i$.

Let us check if the martingale property is also preserved under the perturbation given by the parameters $\alpha_i$ as above. We start from a left-hand point $x_1$ and a cycle $\mathcal C_1$. The other left-hand point of $\mathcal C_1$ will be $x_2$, which is also left-hand point $x_2$ of the cycle $\mathcal C_2$. 
The martingale property at $x_2$ is fulfilled if and only if 
\[-\alpha_1 (y_{1,1}-y_{2,1})+\alpha_{2} (y_{2,1}-y_{2,2})=0,\]
and similarly for the other points. Eventually we obtain the martingale condition at the point $x_1$ as 
\[ -\alpha_{n} (y_{n,1}-y_{n,1})+\alpha_{1} (y_{1,1}-y_{2,1})=0.\]
The key observation is that this last equation, {\it because of the cycle property}, is obtained as a sum of the $n-1$ previous ones. Indeed: 
\begin{eqnarray*}0 &=& \sum_{i=1}^n \alpha_i  ((y_{i,1}-y_{i,2}) -(y_{i,1}-y_{i,2})) \\
&=& -\alpha_1 (y_{1,1}-y_{2,1})+\alpha_2 (y_{2,1}-y_{2,2})+ \cdots +(-\alpha_{n} (y_{n,1}-y_{n,1})+\alpha_{1} (y_{1,1}-y_{2,1})).\end{eqnarray*}
We get therefore a system of $n-1$ equations with $n$ unknowns, which is readily solved by induction in this case, taking for instance $\alpha_1$ as free parameter. Therefore, we have obtained a perturbation preserving both the marginals and the martingale property.

\subsubsection{Generalization to arbitrary cycles}

We can now generalize the previous pattern to cycles of any length, in the following way: consider $n$ classical cycles $\mathcal C_i$ with $i=1,\ldots,n$, each of arbitrary length, with the property that the union of the $X$-sections of the cycles contains exactly
$n$ distinct points $x_1, \ldots ,x_n$, i.e. $\bigcup_{i=1}^n ({\mathcal C_i})_X = \{x_1,\ldots,x_n\}$. Let $\gamma_{i,j} = y_{i,j} - y_{i,{j+1}}$ be the difference between the right-hand point of the outgoing path from $x_j$ and the right-hand point of the incoming path to $x_j$ along the cycle $\mathcal C_i$. 

We start with a statement relating the WEP and a certain pattern of cycles for some subset $S \subset X \times Y$:

\begin{proposition}\label{cyclesGen}
Assume that a set $S$ in $X \times Y$ contains, for some $n \geq 2$, a set of $n$ classical cycles $\mathcal C_i$ such that:
\begin{enumerate}
\item $|\bigcup_{i=1}^n ({\mathcal C_i})_X|=n$;
\item the cycles are \emph{free}, i.e. each cycle $\mathcal C_i$ contains a path which does not belong to any other cycle $\mathcal C_k$, $k \neq j$.
\end{enumerate}
Then the WEP does not hold for $S$.
\end{proposition}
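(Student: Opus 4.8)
The plan is to prove the statement by a duality argument: I will show that the two hypotheses allow one to build a nonzero signed measure $\lambda$ on $S$, with finite support contained in the union of the cycles, which is orthogonal to every function of the form $(x,y)\mapsto \varphi(x)+h(x)(y-x)-\psi(y)$. Once such a $\lambda$ is available, I would pick any path $(x_0,y_0)$ with $\lambda(x_0,y_0)\neq 0$ and test against the indicator $f=\mathbf 1_{(x_0,y_0)}$: since $\lambda$ annihilates the image of the WEP decomposition map while $\sum_{(x,y)}\lambda(x,y)f(x,y)=\lambda(x_0,y_0)\neq 0$, this particular $f$ admits no decomposition, so the WEP fails for $S$.

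First I would make the orthogonality explicit. Expanding $\sum_{(x,y)\in S}\lambda(x,y)\bigl[\varphi(x)+h(x)(y-x)-\psi(y)\bigr]$ and requiring it to vanish for all $\varphi,h,\psi$ yields three families of conditions: the row sums $\sum_{y\in Y(x)}\lambda(x,y)$ vanish (orthogonality to $\varphi$), the column sums $\sum_{x\in X(y)}\lambda(x,y)$ vanish (orthogonality to $\psi$), and the weighted row sums $\sum_{y\in Y(x)}\lambda(x,y)\,y$ vanish (orthogonality to $h$, after using the first condition). The first two are exactly the marginal-preservation conditions, and the alternating unit perturbation $\lambda_i$ attached to each classical cycle $\mathcal C_i$ as in (\ref{perturb}) already satisfies them. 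I would therefore set $\lambda=\sum_{i=1}^n \alpha_i\lambda_i$; the row- and column-sum conditions then hold automatically for every choice of scalars $\alpha_i$, and only the martingale-type conditions remain to be arranged.

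Next I would recast the martingale conditions as a linear system. At each of the $n$ distinct points $x_j$ the requirement $\sum_y \lambda(x_j,y)\,y=0$ reads $\sum_i M_{j,i}\,\alpha_i=0$, where $M_{j,i}=\sum_y \lambda_i(x_j,y)\,y$ equals $\pm\gamma_{i,j}$ when $x_j\in(\mathcal C_i)_X$ and $0$ otherwise. By hypothesis (1) there are exactly $n$ points $x_j$ and $n$ cycles, so $M$ is an $n\times n$ matrix and I want a nonzero kernel vector. The crucial observation is that every column of $M$ sums to zero: summing the contribution of a single cycle $\mathcal C_i$ over all its $x$-points gives $\sum_{(x,y)\in\mathcal C_i}\lambda_i(x,y)\,y$, and regrouping this sum by $y$-point makes it telescope to $0$, since in a cycle each $y$-value occurs in exactly two consecutive paths carrying opposite signs. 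Hence $\mathbf 1^{\mathsf T}M=0$, so the rows of $M$ are linearly dependent, $M$ is singular, and $M\alpha=0$ has a nontrivial solution $\alpha$.

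Finally I would invoke hypothesis (2) to ensure $\lambda\neq 0$: choosing $i_0$ with $\alpha_{i_0}\neq 0$, the freeness of $\mathcal C_{i_0}$ provides a path $(x^\ast,y^\ast)$ lying in no other cycle, so that $\lambda(x^\ast,y^\ast)=\pm\alpha_{i_0}\neq 0$. This produces the desired nonzero $\lambda$ and, by the duality argument above, shows the WEP fails. I expect the main obstacle to be the sign bookkeeping in the entries $M_{j,i}$, together with keeping the duality clean when $S_X$ may be infinite; the telescoping-over-$y$ observation is precisely what lets me avoid tracking the individual signs, reducing the entire martingale constraint to the single structural fact that $M$ has vanishing column sums.
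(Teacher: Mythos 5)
Your proof is correct, but it runs through the \emph{dual} of the argument the paper gives for this proposition. The paper works on the primal side: summing a WEP decomposition of $f$ along each cycle $\mathcal C_i$ cancels the $\varphi$- and $\psi$-terms and leaves the linear system $\sum_j h_j\gamma_{i,j}=\hat f_i$; since the row sums $\sum_j \gamma_{i,j}$ vanish by the cycle property, the matrix $\Gamma=(\gamma_{i,j})$ has rank at most $n-1$, while freeness makes $f\mapsto(\hat f_i)_i$ surjective onto $\mathbb R^n$ (indicators of the free paths give the standard basis vectors up to sign), so some $f$ has $\hat f\notin \mathrm{Im}\,\Gamma$, a contradiction. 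Your matrix $M$ is, up to a global sign in each column, exactly $\Gamma^{\mathsf T}$, and your two uses of the hypotheses correspond to the paper's under duality: your kernel vector $\alpha$ exists because the columns of $M$ sum to zero, which is the same cycle identity as $\Gamma\mathbf 1=0$, and your use of freeness (the map $\alpha\mapsto\sum_i\alpha_i\lambda_i$ is injective) is precisely the adjoint of the paper's use (the map $f\mapsto(\langle\lambda_i,f\rangle)_i$ is surjective). What your route buys: the annihilating signed measure $\lambda$ is exactly the marginal- and martingale-preserving perturbation that the paper constructs separately to prove Proposition \ref{cyclesGen-extr}, so your argument unifies the WEP statement and the extremality statement through a single object, and it exhibits concrete non-representable functions, namely indicators of single paths in the support of $\lambda$. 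What the paper's route buys: it never needs to build the functional at all, staying at the level of the representation coefficients $h_j$ and reading the contradiction off the rank bound. Finally, your worry about infinite $S_X$ is immaterial: since $\lambda$ has finite support, every pairing is a finite sum, so no integrability or continuity issues arise in the duality step.
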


\begin{proof}
Assume that WEP$(f)$ holds for any function $f$. Let $h_j$ be the coefficient of $(y-x)$ in the WEP decomposition attached to the point $x_j$. We have along each cycle $\mathcal C_i$, with the notations above:
\[ \sum_j h_j \gamma_{i,j} = \hat f_i\]
where $\hat f_i$ is the sum of the values of $f$ along the paths of the cycles, counted with a positive sign if path goes from $X$ to $Y$ along the cycle,
and with a negative sign otherwise. By the cycle property we have $\sum_j \gamma_{i,j}=0$ so the the identity vector belongs to the kernel of the matrix $\Gamma = (\gamma_{i,j})$. By the rank theorem,
the image of the matrix $\Gamma$ is of dimension at most $n-1$. It remains to observe that the assumptions 2 in the statement (i.e. cycles are free) implies that the vectors $(\hat f_i)_{1 \leq i \leq n}$ when $f$ varies in the set of all real-valued functions defined on $X \times Y$, span the whole $\mathbb R^n$.
Therefore, for functions $f$ such that the vector $\hat f_i$ does not belong to the image of $\Gamma$, the above relation does not hold, whence a contradiction.
\end{proof}

Let us now go back to the construction of a martingale perturbation.  Attach to each cycle $\mathcal C_i$ a perturbation as in (\ref{perturb}) with parameter $\alpha_i$. So a (classical) perturbation 
associated to the vector $\alpha_1, \ldots,\alpha_n$ can be built along the $n$ cycles $\mathcal C_1,\ldots, \mathcal C_n$ by choosing sufficiently small parameters $\alpha_i$. 

Let us investigate now, exactly as above, the martingale conditions at the left-hand points $x_j$. The contribution of the cycle $\mathcal C_i$ to the martingale condition at point $x_j$ will be $\alpha_i \gamma_{i,j}$ where the classical cycle condition entails $\sum_j \gamma_{i,j} =0$, for each $i$, and the martingale condition at the point $x_j$ reads $\sum_i \alpha_i \gamma_{i,j}=0$. Exactly as in Section \ref{sec:twoMeshesRevisited} we have therefore
\begin{equation} \label{eq:martCond}
0 = \sum_i \alpha_i \sum_j \gamma_{i,j} = \sum_j \sum_i \alpha_i \gamma_{i,j}
\end{equation}
so that the martingale condition at any point $x_j$ is entailed by the martingale conditions at all the other left-hand points.

We are left with $n-1$ equations for $n$ unknown, and by the rank theorem the solution is a vector space of dimension at least $1$, so that by taking a sufficiently small element in this space we 
get a perturbation preserving the martingale property. Now it remains to prove that this perturbation is not zero.\footnote{A simple example where this would happen is given by a set of two cycles 
with twice the same cycle: our approach would lead to a single equation in two unknowns $\alpha_1, \alpha_2$, with a one dimensional solution space given by $\alpha_1+\alpha_2=0$. 
The resulting perturbation in this case is the sum of the perturbations $\alpha_1$ and $\alpha_2$ along the cycle, hence the zero perturbation.}
So we need an additional hypothesis, which is given by assumption 2 in the previous proposition (i.e. freeness of cycles): it guarantees indeed than any non-zero solution vector of \ref{eq:martCond} is associated to a 
non-zero perturbation, since for each index $i$ there is a path which is perturbed by $\alpha_i$ only, and not by a linear combination of the components of $\alpha$. 
We have just proved the following

\begin{proposition}\label{cyclesGen-extr}
Let $Q \in \mathcal M(\mu, \nu)$. Assume that the support of $Q$ satisfies the assumptions of Propostion \ref{cyclesGen} with free cycles $\mathcal C_1,\ldots, \mathcal C_n$.
Then $Q$ is not extremal.
\end{proposition}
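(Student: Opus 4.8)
The plan is to build the perturbation explicitly and then verify it lies in $\mathcal M(\mu,\nu)$ while being a genuine (nonzero) perturbation, exactly as outlined in the two subsections preceding the statement. The whole construction has already been set up: I attach to each of the $n$ free cycles $\mathcal{C}_i$ a one-parameter perturbation of the type (\ref{perturb}), with parameter $\alpha_i$, so that along each individual cycle the marginals $\mu$ and $\nu$ are automatically preserved. The only additional constraint I need to impose is the martingale property, which, as derived in equation (\ref{eq:martCond}), amounts to the homogeneous linear system $\sum_i \alpha_i \gamma_{i,j}=0$ for every left-hand point $x_j$, that is, $\Gamma^{\top}\alpha=0$ where $\Gamma=(\gamma_{i,j})$.

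The first key step is the counting argument on the dimension of the solution space. By the classical cycle identity $\sum_j \gamma_{i,j}=0$ (valid for each fixed $i$), the all-ones vector $\mathbf 1=(1,\ldots,1)^{\top}$ lies in $\ker\Gamma$, so by the rank theorem the image of $\Gamma$ has dimension at most $n-1$; equivalently, the martingale conditions at the $n$ points $x_j$ are linearly dependent (the sum of all of them vanishes as in (\ref{eq:martCond})). Hence I have genuinely only $n-1$ independent equations in the $n$ unknowns $\alpha_1,\ldots,\alpha_n$, and the solution space of $\Gamma^{\top}\alpha=0$ has dimension at least $1$. I then pick any nonzero solution $\alpha=(\alpha_1,\ldots,\alpha_n)$ and scale it so that $\max_i|\alpha_i|$ is smaller than the minimal positive mass $Q$ assigns to any path occurring in the cycles; this guarantees that the perturbed weights stay nonnegative, so the resulting $Q^{\alpha}$ and the sign-flipped $\tilde Q^{\alpha}$ are bona fide probability measures in $\mathcal M(\mu,\nu)$ with $Q=(Q^{\alpha}+\tilde Q^{\alpha})/2$.

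The step I expect to be the crux — and the only place the \emph{freeness} hypothesis (assumption 2 of Proposition \ref{cyclesGen}) is really used — is showing that the chosen nonzero $\alpha$ induces a \emph{nonzero} perturbation of $Q$. This is exactly the subtlety flagged in the footnote: a nonzero coefficient vector could in principle produce the zero measure if the cycles overlapped badly (as with two copies of the same cycle, where $\alpha_1+\alpha_2=0$ yields cancellation). Freeness rules this out: for each index $i$ there is by hypothesis a path belonging to $\mathcal{C}_i$ and to no other cycle $\mathcal{C}_k$, $k\neq i$. Along that private path the total perturbation equals $\pm\alpha_i$ (up to the fixed nonzero dispatching factor $p$ or $q$ from (\ref{perturb})), with no contribution from any other cycle. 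Therefore, if $\alpha\neq 0$, say $\alpha_{i_0}\neq 0$, the weight of the private path of $\mathcal{C}_{i_0}$ is strictly changed, so $Q^{\alpha}\neq Q$. This contradicts extremality and completes the argument.

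In writing it up I would state the three ingredients in order: (1) recall from the preceding discussion that the per-cycle perturbation preserves the marginals and reduce the martingale requirement to $\Gamma^{\top}\alpha=0$; (2) invoke $\mathbf 1\in\ker\Gamma$ to get $\dim\ker\Gamma^{\top}\ge 1$ and extract a small nonzero $\alpha$; (3) use freeness to conclude $Q^{\alpha}\neq Q$, hence $Q$ is not extremal. The calculations in steps (1)–(2) are the ones already carried out in Section \ref{sec:twoMeshesRevisited} and in the lines immediately preceding the statement, so the proof can legitimately refer back to equation (\ref{eq:martCond}) rather than redoing them, and the genuine content is concentrated in the freeness argument of step (3).
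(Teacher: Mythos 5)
Your proposal is correct and follows essentially the same route as the paper: per-cycle perturbations as in (\ref{perturb}) preserve the marginals, the cycle identity $\sum_j \gamma_{i,j}=0$ makes the $n$ martingale conditions linearly dependent so the rank theorem yields a nonzero solution $\alpha$, and freeness guarantees the resulting perturbation is nonzero via each cycle's private path. (One cosmetic slip: the dispatching factors $p,q$ belong to the 2-mesh perturbation, not to (\ref{perturb}), where the perturbation on the private path is exactly $\pm\alpha_i$ --- this does not affect the argument.)
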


\begin{example}
{\rm The pattern in Example \ref{cycles-counter} satisfies the hypotheses of Propositions \ref{cyclesGen} and \ref{cyclesGen-extr}: the three (classical) cycles can be taken as 
\[ (x_1, y_2, x_2, y_3, x_1), \quad (x_2, y_4, x_3, y_5, x_2), \quad (x_1, y_1, x_3, y_4, x_2, y_2, x_1).\] }
\end{example}
It can also be checked than in all the finite examples of extremal points stated in this paper there are
at most $n-1$ free cycles with $n$ left-hand points. We leave the converse statement, i.e. if $Q$ is not extremal then there is necessarily such a 
configuration of cycles in its support, as a conjecture.

\section{Conclusion}\label{conclusion}
In this paper, motivated by the recent literature in model-free finance, we have investigated the properties of the supports of extremal martingale measures with given marginals. Using the Douglas-Lindenstrauss-Naimark Theorem, we have provided an equivalence between extremality of some martingale measure $Q$ with given marginals and the denseness in $L^1(Q)$ of a suitable linear subspace, which has a natural financial interpretation as the set of all semi-static strategies. Furthermore, we have studied the combinatorial properties of the supports of such extremal measures in the countable case. More precisely, we have focused on a pointwise version of the weak PRP, called WEP, which implies the extremality when one of the two marginals has finite support. Then we have introduced three combinatorial properties called ``full erasability'', \ref{2LP} and ``no deadlocks'', and we have proved the following implications (among others):\vspace{0.5cm}
\begin{figure}[H]	
\centering
\includegraphics[scale=1]{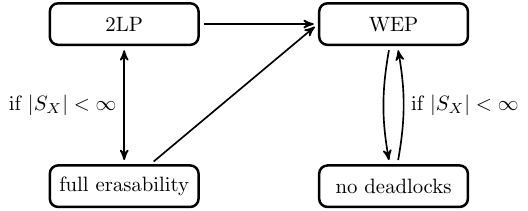}
\end{figure}
\vspace{0.3cm}

\noindent Moreover, we have also started to study the role of cycles in relation to extremality and identified some forbidden patterns, generalizing the notion of (classical) cycles, in the supports of extremal measures. Many examples have been provided in order to illustrate all those notions and how they differ from each other. Many problems remain open, such as showing the equivalence between the WEP and the extremality in full generality (if it holds), the relation with graph theory and, more importantly, to what extent those implications can be extended to the non-countable case, e.g. when the marginals have absolutely continuous densities. They are all left for future research.

\end{document}